\font\got=eufm10 at 11pt
\font\posebni=msam10
\renewcommand{\Re}[0]{{\rm Re}\,}
\renewcommand{\Im}[0]{{\rm Im}\,}
\newcommand{\2}[0]{\v{C}}
\newcommand{\ca}[0]{\mathbbm{1}}
\newcommand{\C}[0]{{\mathbb C}}
\newcommand{\f}[0]{\varphi}
\newcommand{\N}[0]{{\mathbb N}}
\newcommand{\R}[0]{\mathbb{R}}
\newcommand{\leqsim}[0]{\,\text{\posebni \char46}\,}
\newcommand{\geqsim}[0]{\,\text{\posebni \char38}\,}
\newcommand{\cA}[0]{{\mathcal A}}
\newcommand{\cB}[0]{{\mathcal B}}
\newcommand{\cE}[0]{{\mathcal E}}
\newcommand{\cF}[0]{{\mathcal F}}
\newcommand{\cI}[0]{{\mathcal I}}
\newcommand{\cM}[0]{{\mathcal M}}
\newcommand{\cP}[0]{{\mathcal P}}
\newcommand{\cQ}[0]{{\mathcal Q}}
\newcommand{\cR}[0]{{\mathcal R}}
\newcommand{\cV}[0]{{\mathcal V}}
\newcommand{\cW}[0]{{\mathcal W}}
\newcommand{\cX}[0]{{\mathcal X}}
\newcommand{\bA}{\boldsymbol{A}}
\newcommand{\oA}[0]{{\mathscr A}}
\newcommand{\oL}[0]{{\mathscr L}}
\newcommand{\oV}[0]{{\mathscr V}}
\newcommand{\sX}[0]{{\mathfrak X}}
\newcommand{\sH}[0]{{\mathfrak H}}
\newcommand{\ovR}[0]{\overline{\rm R}}
\newcommand{\gota}[0]{{\text{\got a}}}
\newcommand\bS{\mathbf{S}}
\newcommand{\mn}[2]{\{ #1 : #2 \}}
\newcommand{\Mn}[2]{\left\{ #1 : #2 \right\}}
\newcommand{\sk}[2]{\left\langle #1 , #2\right\rangle}
\renewcommand{\div}[0]{{\rm div}\,}
\newcommand{\Dom}[0]{{\rm D}}
\newcommand{\Jei}[0]{\left(\begin{array}{cc}0& -I_{\R^{d}} \\ I_{\R^{d}} & 0\end{array}\right)}
\newtheorem{theorem}{Theorem}
\newtheorem{lemma}[theorem]{Lemma}
\newtheorem{proposition}[theorem]{Proposition}
\newtheorem{corollary}[theorem]{Corollary}
\theoremstyle{definition}
\newtheorem{example}[theorem]{Example}
\renewcommand\leq[0]{\leqslant}
\renewcommand\geq[0]{\geqslant}
\renewcommand\epsilon[0]{\varepsilon}
\renewcommand\theta[0]{\vartheta}
\newcommand\wrt{\,\text{\rm d}}
\renewcommand\mod[1]{\left\vert{#1}\right\vert}
\newcommand\norm[2]{{\left\Vert{#1}\right\Vert_{#2}}}
\newtheorem{preremark}[theorem]{Remark}  \newenvironment{remark}%
{\begin{preremark}\rm}{\end{preremark}}
\begin{document}

\title[Bilinear embedding on domains]{Bilinear embedding for divergence-form operators with complex coefficients on irregular domains}
\author[Carbonaro]{Andrea Carbonaro}
\author[Dragi\v{c}evi\'c]{Oliver Dragi\v{c}evi\'c}

\subjclass[2010]{47A60, 47D03, 42B25, 35R15} 
\date{July 24, 2019}

\address{Andrea Carbonaro \\Universit\`a degli Studi di Genova\\ Dipartimento di Matematica\\ Via Dodecaneso\\ 35 16146 Genova\\ Italy }
\email{carbonaro@dima.unige.it}

\address{Oliver Dragi\v{c}evi\'c \\University of Ljubljana\\ Faculty of Mathematics and Physics\\ and Institute of Mathematics, Physics and Mechanics\\ Jadranska 21, SI-1000 Ljubljana\\ Slovenia}
\email{oliver.dragicevic@fmf.uni-lj.si}

\begin{abstract}
Let $\Omega\subseteq\R^{d}$ be open and $A$ a complex uniformly strictly accretive $d\times d$ matrix-valued function on $\Omega$ with $L^{\infty}$ coefficients. Consider the divergence-form operator $\oL^{A}=-\div(A\nabla)$ with mixed boundary conditions on $\Omega$. We extend the bilinear inequality that we proved in \cite{CD-DivForm} in the special case when $\Omega=\R^{d}$. As a consequence, we obtain that the solution to the parabolic problem $u^{\prime}(t)+\oL^{A}u(t)=f(t)$, $u(0)=0$, has maximal regularity in $L^{p}(\Omega)$, for all $p>1$ such that $A$ satisfies the $p$-ellipticity condition that we introduced in \cite{CD-DivForm}.
This range of exponents is optimal for the class of operators we consider. We do not impose any conditions on $\Omega$,  in particular, we do not assume any regularity of $\partial\Omega$, nor the existence of a Sobolev embedding. The methods of \cite{CD-DivForm} do not apply directly to the present case and a new argument is needed.
\end{abstract}

\maketitle

\section{Introduction}
\label{s: Neumann introduction}

Let $\Omega\subseteq\R^{d}$ be a nonempty open set. Denote by $\cA(\Omega)$ the class of all complex uniformly strictly elliptic $d\times d$ matrix-valued functions on $\Omega$ with $L^{\infty}$ coefficients (in short, elliptic matrices). That is to say $\cA(\Omega)$ is the class of all measurable $A:\Omega\rightarrow \C^{d\times d}$ for which there exist $\lambda=\lambda(A),\Lambda=\Lambda(A)>0$ such that for almost all $x\in\Omega$ we have
\begin{eqnarray}
\label{eq: N ellipticity}
\Re\sk{A(x)\xi}{\xi}
&\hskip -19pt\geq \lambda|\xi|^2\,,
&\quad\forall\xi\in\C^{d};
\\
\label{eq: Neumann bounded}
\mod{\sk{A(x)\xi}{\sigma}}
&\hskip-6pt\leq \Lambda \mod{\xi}\mod{\sigma}\,,
&\quad\forall\xi,\sigma\in\C^{d}.
\end{eqnarray}

Suppose that $A\in\cA(\Omega)$. Fix a closed subspace $\oV$ of $W^{1,2}(\Omega)$ containing $W^{1,2}_{0}(\Omega)$. Denote by $\oL^{A}_{2}=\oL^{A,\oV}_{2}$ the unbounded operator on $L^{2}(\Omega)$ associated with the densely defined, accretive, continuous and closed sesquilinear form
$$
\gota_{A,\oV}(u,v)= \int_{\Omega}\sk{A\nabla u}{\nabla v}_{\C^{d}},\quad \Dom(\gota_{A,\oV})=\oV.
$$
Namely,
 $$
 \Dom\left(\oL^{A}_{2}\right):=\Mn{u\in \oV}
 {\exists w\in L^2(\Omega):\ 
 \gota_{A,\oV}(u,v)=\sk{w}{v}_{L^2(\Omega)}\ \forall v\in \oV}
$$
and 
\begin{equation}
\label{eq: ibp}
\int_{\Omega}\oL^{A}_{2}(u)\,\overline{v}=\gota_{A,\oV}(u,v),\quad \forall u\in\Dom(\oL^{A}_{2}),\quad \forall v\in\oV\,.
\end{equation}

Ellipticity of $A$ implies that the form $\gota_{A,\oV}$ is sectorial in the sense of Kato:
$$
\theta^{*}_{2}:=\sup\left\{\arg(\gota_{A,\oV}(u,u)): u\in \oV
\right\}<\pi/2.
$$
Therefore, the associated operator is the negative generator of a strongly continuous semigroup $(T^{A,\oV}_{t})_{t>0}$ on $L^{2}(\Omega)$ which is analytic and contractive in the cone 
$$
\bS_{\theta_{2}}=\mn{z\in\C\setminus\{0\}}{|\arg (z)|<\theta_{2}},
$$ 
where $\theta_{2}=\pi/2-\theta^{*}_{2}$. We also have $\oL^{A^{*}}_{2}=(\oL^{A})^{*}_{2}$, so $T^{A^{*}}_{t}=(T^{A}_{t})^{*}$ for all $t> 0$. For details and proofs, see \cite[Chapter VI]{Kat}, \cite{AuTc} and \cite[Chapters I and IV]{O}.
\medskip

Given a closed set $D\subseteq \partial\Omega$ we define
$$
W^{1,2}_{D}(\Omega)=\overline{\{u_{\vert_{\Omega}}: u\in C^{\infty}_{c}(\R^{d}\setminus D)\}}^{W^{1,2}(\Omega)}.
$$
\subsection{Mixed boundary conditions}\label{s: boundary}
We shall always assume that $\oV$ is one of the following closed subspaces of $W^{1,2}(\Omega)$:
\begin{enumerate}
\item $\oV=W^{1,2}(\Omega)$ corresponding to Neumann boundary conditions for $\oL^{A}$, or
\item $\oV=W^{1,2}_{D}(\Omega)$ corresponding to Dirichlet conditions in $D$ and Neumann conditions in $\partial\Omega\setminus D$ for $\oL^{A}$.
\end{enumerate}
The latter case includes 
Dirichlet boundary conditions ($D=\partial\Omega$) and  good-Neumann boundary conditions ($D=\emptyset$); see \cite[Section~4.1]{O}.
\smallskip

We notice that the very same boundary conditions have been recently considered, for example, in \cite{Egert,Egert2018,Elst2019} but under stronger assumptions on $\Omega$.
\smallskip

In the special case of pure Neumann boundary conditions $\oV=W^{1,2}(\Omega)$ we denote the semigroup generated by $-{\rm div}(A\nabla)$, $A\in\cA(\Omega)$, simply by $(T^{A}_{t})_{t>0}$.

\subsection{The $p$-ellipticity condition} 
We summarize the following notion, which we introduced in \cite{CD-DivForm}.

For every $p\in [1,+\infty]$ consider the $\R$-linear operator
$$
\cI_{p}(\xi)=\xi+(1-2/p)\overline{\xi},\quad \xi\in\C^{d}.
$$
Given $A\in\cA(\Omega)$ and $p\in [1,+\infty]$, we introduce the number
\begin{equation}
\label{eq: N Deltap}
\Delta_p(A):=
\underset{x\in\Omega}{{\rm ess}\inf}\min_{|\xi|=1}\Re\sk{A(x)\xi}{\cI_{p}\xi}_{\C^{d}}.
\end{equation}
We say that $A$ is {\it $p$-elliptic} if 
\begin{equation}
\label{eq: Delta>0}
\Delta_p(A)>0.
\end{equation}
By definition, $A$ is $p$-elliptic if and only if there exists $C=C(A,p)>0$ such that for a.e. $x\in\Omega$,
\begin{equation*}
\Re\sk{A(x)\xi}{\cI_{p}\xi}_{\C^{d}}
\geq C |\xi|^2\,,
\quad \forall\xi\in\C^{d}.
\end{equation*}
Clearly, 
$\Delta_{2}(A)>0$ is 
a reformulation of the ellipticity condition \eqref{eq: N ellipticity}. 
It follows from the definition that a bounded matrix function $A$ is real and elliptic if and only if it is $p$-elliptic for all $p>1$. For further properties of the function $p\mapsto \Delta_{p}(A)$ we refer the reader to \cite{CD-DivForm} and Lemma~\ref{l: N prop of Deltap}.

Dindo\v s and Pipher in \cite{Dindos-Pipher} and \cite{Dindos-Pipher2, Dindos-Pipher3} showed that the key condition \eqref{eq: Delta>0} also bears deep connections with the regularity theory of elliptic PDE. 
They found the sharp condition which permits proving reverse H\"older inequalities for weak solutions to ${\rm div}(A\nabla u)=0$ with complex $A$. It turns out that this condition is precisely a reformulation of $p$-ellipticity \eqref{eq: Delta>0}. 

Recently, Egert \cite{Egert2018} and, independently, ter Elst, Haller-Dintelmann, Rehberg and Tolksdorf \cite{Elst2019} used $p$-ellipticity and its properties for studying semigroup extrapolation and parabolic maximal regularity for divergence form operators with mixed boundary conditions on domains $\Omega$ that satisfy certain geometric assumptions. 

A condition similar to \eqref{eq: Delta>0}, namely $\Delta_p(A)\geq0$, was formulated in a different manner by Cialdea and Maz'ya in \cite[(2.25)]{CiaMaz}. It was a result of their study of a condition on forms known as $L^{p}$-dissipativity. 
We arrived in \cite{CD-DivForm} at the $p$-ellipticity, and thus also at $\Delta_p(A)\geq0$, from another direction (bilinear embeddings and generalized convexity of power functions) further developing and extending the methods from \cite{CD-mult} and \cite{CD-OU}; see \cite[Remark 5.9]{CD-DivForm}.

\subsection{Semigroup estimates and bilinear embedding on $\R^{d}$}\label{s: sem emb}
In \cite[Theorem~1.3]{CD-DivForm} we used a theorem of Nittka \cite[Theorem 4.1]{Nittka} and showed that the condition $\Delta_{p}(A)\geq 0$ implies contractivity in $L^{p}(\Omega)$ of the semigroup generated by $-\div(A\nabla)$ with {\it Dirichlet} boundary conditions in $\Omega$. This improved an earlier result of Cialdea and Maz'ya \cite{CiaMaz}. A straightforward modification of the proof shows that the implication $(a)\Rightarrow (b)$ in \cite[Theorem~1.3]{CD-DivForm} still holds true if we replace Dirichlet boundary conditions with Neumann boundary conditions. One can also consider mixed boundary conditions, see \cite{Egert2018}. So we have the following result.

\begin{proposition}\label{p: N Nittka} 
Suppose that $\Omega\subseteq\R^{d}$ is open. Let $\oV$ be one of the subspaces of Section~\ref{s: boundary}. Let $A\in\cA(\Omega)$ and $p>1$ be such that $\Delta_{p}(A)\geq0$. Then $(T^{A,\oV}_{t})_{t>0}$ extends to a strongly continuous semigroup of contractions on $L^{p}(\Omega)$.
\end{proposition}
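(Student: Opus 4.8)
The plan is to reduce Proposition~\ref{p: N Nittka} to the cited result \cite[Theorem~1.3]{CD-DivForm} by verifying that Nittka's criterion \cite[Theorem~4.1]{Nittka} for invariance of a closed convex set under the semigroup applies verbatim in the present generality. Recall that Nittka's theorem characterizes, for a densely defined, accretive, continuous and closed form $\gota$ on $L^{2}$, when the associated semigroup leaves invariant the ball (or a closed convex set) $\cC$ in terms of a form-inequality involving the metric projection $P_{\cC}$ onto $\cC$; the point is that this criterion makes no use whatsoever of the geometry of $\Omega$, only of the abstract structure of the form domain $\oV$ as a closed subspace of $W^{1,2}(\Omega)$.

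First I would fix $p>1$ and let $\cC_{p}$ denote the closed convex subset of $L^{2}(\Omega)\cap L^{p}(\Omega)$ on which $L^{p}$-contractivity is to be tested — concretely, following \cite{CD-DivForm,Nittka}, one tests invariance of the unit ball of $L^{p}$ intersected with $L^{2}$, and $L^{p}$-contractivity of $(T^{A,\oV}_{t})$ on the dense subspace $L^{2}\cap L^{p}$ is equivalent to this invariance, which then extends by density to all of $L^{p}(\Omega)$. The metric projection onto $\cC_{p}$ has the explicit pointwise form used in \cite{CD-DivForm}, and the key analytic input is that it maps $\oV$ into $\oV$: for $\oV=W^{1,2}(\Omega)$ this is the classical fact that truncation-type nonlinearities with bounded Lipschitz derivative preserve $W^{1,2}$, and for $\oV=W^{1,2}_{D}(\Omega)$ one checks it on the generating set $\{u_{\vert_{\Omega}}:u\in C^{\infty}_{c}(\R^{d}\setminus D)\}$ and passes to the closure, using that the nonlinearity is Lipschitz and fixes $0$. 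Second, I would verify the form-inequality in Nittka's criterion: writing $u\in\oV$ and $P_{\cC_{p}}u$ for its projection, one must show $\Re\,\gota_{A,\oV}(P_{\cC_{p}}u,\,u-P_{\cC_{p}}u)\geq0$. Expanding the integrand, this reduces pointwise a.e. in $\Omega$ to exactly the inequality $\Re\sk{A(x)\xi}{\cI_{p}\xi}_{\C^{d}}\geq0$ — i.e. to $\Delta_{p}(A)\geq0$ — applied with $\xi=\nabla u(x)$ on the set where $|u(x)|>1$ (and trivially $0$ elsewhere); this is the same computation as in the proof of \cite[Theorem~1.3]{CD-DivForm}, and nothing there used $\Omega=\R^{d}$ or Dirichlet conditions beyond the projection-invariance of the form domain.

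Finally, having checked both hypotheses of \cite[Theorem~4.1]{Nittka}, I would conclude that $(T^{A,\oV}_{t})_{t>0}$ maps $\cC_{p}$ into itself, hence is $L^{p}$-contractive on $L^{2}(\Omega)\cap L^{p}(\Omega)$; since the semigroup is strongly continuous on $L^{2}$ and $L^{2}\cap L^{p}$ is dense in $L^{p}(\Omega)$, a standard density argument gives a unique extension to a strongly continuous contraction semigroup on $L^{p}(\Omega)$, which is the assertion. The only genuine obstacle is the passage to mixed boundary conditions in the projection-invariance step: one must be sure that post-composition with the (globally Lipschitz, $0$-fixing) projection nonlinearity preserves the closed subspace $W^{1,2}_{D}(\Omega)$, and the clean way to see this is to note that such a nonlinearity acts continuously on $W^{1,2}(\Omega)$ and sends the dense subset $\{u_{\vert_{\Omega}}:u\in C^{\infty}_{c}(\R^{d}\setminus D)\}$ into $W^{1,2}_{D}(\Omega)$ (since the relevant truncation of such a test function still vanishes near $D$), whence it preserves the closure $W^{1,2}_{D}(\Omega)$; everything else is routine and already carried out, mutatis mutandis, in \cite{CD-DivForm}.
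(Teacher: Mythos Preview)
Your approach is correct and is precisely the ``straightforward modification'' the paper alludes to: the paper does not give a self-contained proof here but refers to \cite[Theorem~1.3]{CD-DivForm} (Nittka's invariance criterion plus the pointwise computation reducing to $\Delta_{p}(A)\geq0$) and to \cite{Egert2018} for the mixed-boundary case, and you have correctly identified that the only new ingredient is showing the projection nonlinearity preserves $\oV$.

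One technical correction is worth flagging. In your last paragraph you assert that the Lipschitz, $0$-fixing nonlinearity ``acts continuously on $W^{1,2}(\Omega)$'' and then pass to the closure; but a merely Lipschitz map $N$ need not be strongly continuous on $W^{1,2}$ (gradient convergence can fail). The correct argument---and the one the paper itself uses in the analogous Lemma~\ref{l: invariance}, following \cite[Lemma~4]{Egert2018}---is a weak-compactness argument: if $u_\ell\in C^{\infty}_{c}(\R^{d}\setminus D)$ with $u_\ell\to u$ in $W^{1,2}(\Omega)$, then $N(u_\ell)\to N(u)$ in $L^{2}(\Omega)$ and $(N(u_\ell))_\ell$ is bounded in $W^{1,2}_{D}(\Omega)$ (by the Lipschitz bound and the chain rule), so a subsequence converges weakly in $W^{1,2}_{D}(\Omega)$, forcing $N(u)\in W^{1,2}_{D}(\Omega)$. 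With this adjustment your outline is complete.
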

One of the main points of \cite{CD-DivForm} was the connection between $p$-ellipticity and bilinear embeddings associated with divergence-form operators with complex coefficients.
More specifically, given $A,B\in\cA(\Omega)$ define
\begin{equation}
\label{eq: DplAB}
\aligned
\Delta_{p}(A,B)&=\min\{\Delta_{p}(A),\Delta_{p}(B)\},\\
\lambda(A,B)&=\min\{\lambda(A),\lambda(B)\},\\
\Lambda(A,B)&=\max\{\Lambda(A),\Lambda(B)\}.
\endaligned
\end{equation}
In the special case $\Omega=\R^{d}$ we proved in \cite[Theorem 1.1]{CD-DivForm} that there exists $C>0$, depending only on $p$, $\Delta_{p}(A,B)$, $\lambda(A,B)$ and $\Lambda(A,B)$, such that
\begin{equation}\label{eq: N bil RN}
\Delta_{p}(A,B)>0
\hskip 5pt
\Longrightarrow 
\hskip 5pt
\int^{\infty}_{0}\!\int_{\R^{d}}\mod{\nabla T^{A}_{t}f(x)}\mod{\nabla T^{B}_{t}g(x)}\wrt x\wrt t\leq C\norm{f}{p}\norm{g}{q},
\end{equation}
for all $f,g\in (L^{p}\cap L^{q})(\R^{d})$.
\medskip

The $p$-ellipticity condition appeared while we were studying in \cite{CD-DivForm} the validity of the right-hand side of \eqref{eq: N bil RN} and is related to the notion of generalised convexity, or convexity with respect to matrices, that we previously studied in some specific cases in \cite{CD-mult} and \cite{CD-OU} and that we shall discuss in Section~\ref{s: Neumann gen conv}.

\subsection{Bilinear embedding on domains}\label{s: N bill dom} 

While Proposition~\ref{p: N Nittka} and \cite[Theorem~1.3]{CD-DivForm} hold true for all open $\Omega\subseteq\R^{d}$, in \cite{CD-DivForm} we were able to prove the bilinear estimate \eqref{eq: N bil RN} only in the special case $\Omega=\R^{d}$. One of the targets of the present paper is to extend \eqref{eq: N bil RN} to every open set $\Omega\subset\R^{d}$. In Section~\ref{s: N proof bil} we shall prove the following result.
\begin{theorem}\label{t: N bil}
Suppose that $\Omega\subset\R^{d}$ is an open set. Let $\oV$ and $\oV^{\prime}$ be two closed subspaces of $W^{1,2}(\Omega)$ of the type described in Section~\ref{s: boundary}. Let $p>1$, $q=p/(p-1)$ and $A,B\in\cA(\Omega)$. Then there exists $C>0$, depending only on $p$, $\Delta_{p}(A,B)$, $\lambda(A,B)$ and $\Lambda(A,B)$, such that
\begin{equation}\label{eq: N bil}
\Delta_{p}(A,B)>0
\hskip 5pt
\Longrightarrow 
\hskip 5pt
\int^{\infty}_{0}\!\int_{\Omega}\mod{\nabla T^{A,\oV}_{t}f(x)}\mod{\nabla T^{B,\oV^{\prime}}_{t}g(x)}\wrt x\wrt t\leq C \norm{f}{p}\norm{g}{q},
\end{equation}
for all $f,g\in (L^{p}\cap L^{q})(\Omega)$.
\end{theorem}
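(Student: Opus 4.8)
The plan is to adapt the heat-flow / Bellman-function scheme of \cite{CD-DivForm} to the domain setting, the central difficulty being that on a general open $\Omega$ with mixed boundary conditions one cannot integrate by parts freely against arbitrary test functions, nor does one have a Sobolev embedding or Gaussian bounds at one's disposal. First I would reduce to finite time horizons and to a dense, regularizing subclass of data: it suffices to bound $\int_0^\tau\!\int_\Omega |\nabla T^{A,\oV}_t f|\,|\nabla T^{B,\oV'}_t g|$ uniformly in $\tau$, for $f,g$ in $\Dom(\oL^A_2)\cap\Dom(\oL^B_2)$ say, and then pass to the limit using Proposition~\ref{p: N Nittka} (which gives the $L^p$--$L^q$ contractivity that controls the right-hand side) together with density.

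The heart of the argument is the generalized-convexity Bellman function $Q=Q_p$ on $\C\times\C$ (the same one used for $\R^d$), which satisfies $Q\geq 0$, $|u|^p+|v|^q\lesssim Q(u,v)\lesssim |u|^p+|v|^q$, and, crucially, the pointwise inequality
\begin{equation*}
-\,\Re\big\langle \He Q(u,v)\,[\,A\zeta,\,B\eta\,]\big\rangle \;\geq\; \delta\,|\zeta|\,|\eta|
\end{equation*}
for all scalars under the $p$-ellipticity hypothesis $\Delta_p(A,B)>0$, with $\delta>0$ depending only on the stated quantities; here one reads off the mixed first/second derivatives appropriately for the two slots. The plan is to apply this to $u=T^{A,\oV}_t f$, $v=T^{B,\oV'}_t g$ and study $t\mapsto E(t):=\int_\Omega Q(u(t,x),v(t,x))\wrt x$. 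Differentiating in $t$ and using the equations $\partial_t u=-\oL^A_2 u$, $\partial_t v=-\oL^B_2 v$, one expresses $-E'(t)$ as a sum of boundary-form pairings $\gota_{A,\oV}(u,\cdot)$ and $\gota_{B,\oV'}(v,\cdot)$ tested against $\partial_u Q,\partial_{\bar u}Q$ (resp. $\partial_v Q$, $\partial_{\bar v}Q$) composed with the flow; after the chain rule this produces exactly the Hessian quadratic form integrated against $|\nabla u|\,|\nabla v|$-type quantities, yielding $-E'(t)\geq \delta\int_\Omega|\nabla u(t)|\,|\nabla v(t)|$. Integrating from $0$ to $\tau$ gives the bilinear integral bounded by $E(0)\lesssim \|f\|_p^p+\|g\|_q^q$; a standard homogeneity/normalization (replacing $f$ by $f/\|f\|_p$, $g$ by $g/\|g\|_q$, or rescaling $Q$) upgrades this to the product bound $C\|f\|_p\|g\|_q$.

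The main obstacle — and the reason \cite{CD-DivForm} does not apply directly — is justifying the differentiation of $E(t)$ and the associated integrations by parts: the nonlinear functions $\partial_u Q(u,v)$ etc.\ need not map $\oV$ into itself, so $\partial_u Q(u,v)$ is not an admissible test function in the form $\gota_{A,\oV}$, and moreover $Q$ is only $C^1$ with Lipschitz (not $C^2$) gradient along the relevant directions, so $\He Q$ is merely an a.e.-defined bounded object. I would handle this by a mollification/truncation of $Q$ (smoothing near the diagonal and cutting off large values), combined with the observation that the regularized test functions do lie in $\oV$: concretely, if $\Phi$ is smooth, Lipschitz, $\Phi(0)=0$ and $u\in\oV\cap L^\infty$, then $\Phi\circ u\in\oV$ (this is where the precise definition $\oV=W^{1,2}(\Omega)$ or $W^{1,2}_D(\Omega)$, stable under such compositions, is used), and the analyticity of the semigroup on $L^2$ gives $u(t),v(t)\in\Dom(\oL^A_2)\cap L^\infty$ for $t>0$ after the reduction to nice data, with the requisite $t$-differentiability in $L^2$. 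One then passes to the limit in the mollification, using the uniform Hessian bound to keep the lower estimate $-E'(t)\geq\delta\int_\Omega|\nabla u|\,|\nabla v|$ intact, and finally removes the a priori boundedness and finite-horizon assumptions by the density/monotone convergence step already described. Controlling the boundary terms is automatic here precisely because we never leave $\oV$: every pairing is of the form $\gota_{A,\oV}(u,w)$ with $w\in\oV$, so no boundary integral ever appears explicitly.
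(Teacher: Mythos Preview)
Your overall architecture matches the paper's: run the heat flow $\cE(t)=\int_\Omega\cQ(T^{A,\oV}_tf,T^{B,\oV'}_tg)$, differentiate, integrate by parts against the form, and use the strict $(A,B)$-convexity of the Nazarov--Treil function. The homogeneity trick $f\mapsto sf$, $g\mapsto s^{-1}g$ to pass from the additive bound to the product bound is also exactly what the paper does.

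However, there is a genuine gap, and it is precisely the one the paper identifies as the heart of the matter. You write that ``the analyticity of the semigroup on $L^2$ gives $u(t),v(t)\in\Dom(\oL^A_2)\cap L^\infty$ for $t>0$ after the reduction to nice data''. This is false in the generality of the theorem: for complex $A$ on an arbitrary open $\Omega$, $T^{A,\oV}_t$ need not map any $L^r$ into $L^\infty$, even for smooth or constant $A$ (the paper constructs explicit counterexamples in the appendix; see also the discussion in Section~\ref{s: N bill dom}). So you cannot arrange $u(t),v(t)\in L^\infty$, and your invariance claim ``$\Phi$ Lipschitz, $\Phi(0)=0$, $u\in\oV\cap L^\infty\Rightarrow\Phi\circ u\in\oV$'' is inapplicable. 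Moreover, the partial derivatives of $\cQ$ are not Lipschitz: $\partial_\zeta\cQ$ grows like $|\zeta|^{p-1}$, so even a mollified $\cQ$ has unbounded Hessian (of order $\nu^{q-2}(|\zeta|^{p-2}+1)$; cf.\ Lemma~\ref{l: N second order}), and the composition need not lie in $W^{1,2}$.

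Your fallback, ``mollification/truncation of $Q$'', hides the real difficulty. A naive cutoff $\psi_n\cdot(\cQ\star\varphi_\nu)$ destroys $(A,B)$-convexity in the annulus $\{3n\leq|\omega|\leq 4n\}$, and the paper shows (Proposition~\ref{p: limit of g-conv}) that simple radial repairs of the form $f_n(|\omega|)$ are \emph{never} $(A,B)$-convex when $\Im A\ne 0$. The paper's actual contribution is the construction of a correction $\cP_n$ built from carefully truncated power functions in both two and four variables, so that $\cR_{n,\nu}=\psi_n\cdot(\cQ\star\varphi_\nu)+C\nu^{q-2}(\cP_n\star\varphi_\nu)$ is globally $(A,B)$-convex, has $D^2\cR_{n,\nu}\in L^\infty$ (hence $(\partial_\zeta\cR_{n,\nu})(u,v)\in\oV$ for \emph{any} $u\in\oV$, $v\in\oV'$, with no $L^\infty$ hypothesis), and has $D\cR_{n,\nu}$ converging to $D(\cQ\star\varphi_\nu)$ with a uniform $|\omega|^{p-1}+|\omega|^{q-1}$ bound allowing dominated convergence. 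This is the missing ingredient in your sketch.
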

The method we used in \cite{CD-DivForm} for proving \eqref{eq: N bil RN} does not apply to arbitrary open $\Omega\subset\R^{d}$, even for pure Neumann boundary conditions $\cV=\cV^{\prime}=W^{1,2}(\Omega)$. Indeed, we proved \eqref{eq: N bil RN} by means of a regularisation argument \cite[Section~6]{CD-DivForm} which reduces the proof to the case of smooth $A, B\in\cA(\R^{d})$ with bounded derivatives. The reduction procedure was used for justifying the integration by parts behind the formula \cite[(3.3)]{CD-DivForm}; see \cite[Section~4.2]{CD-DivForm}. In $\R^{d}$ the advantage of working with smooth coefficients with bounded derivatives is that by results of Auscher, McIntosh and Tchamitchian \cite[Theorem~4.15]{AMT} and Auscher \cite[Theorem~4.8]{AuscherNuc}, in this case, $T^{A}_{t}$ and $T^{B}_{t}$ are bounded in $L^{\infty}(\R^{d})$, for all $t>0$; this was the property we used for proving \cite[Theorem~1.1]{CD-DivForm}.

In Section~\ref{s: N bil RN} we shall simplify the proof of \cite[Theorem~1.1]{CD-DivForm} by means of a new argument based on the aforementioned regularisation trick and elliptic regularity \cite{AgDoNi} for smooth coefficient operators. The key fact here is that if $A$ is sufficiently regular then the domain of $\oL^{A}$ in $L^{r}(\R^{d})$ coincides with $W^{2,r}(\R^{d})$, $1<r<+\infty$. This makes it possible to work with the operator core $C^{\infty}_{c}(\R^{d})$, so that all the integrations by parts can be easily justified.

For divergence-form operators $\oL^{A}$ on $\Omega\subset\R^{d}$ with, say, Neumann boundary conditions $\oV=W^{1,2}(\Omega)$ the situation is different. On one hand the domain of the Neumann Laplacian $\oL^{I}$ in $L^{p}(\Omega)$ is unknown and, in general, it is not included in $W^{2,p}(\Omega)$ \cite{Gris,Costabel} or even in $W^{1,p}(\Omega)$ \cite{JeKe}. One the other hand, extrapolation of $T^{A}_{t}$ on $L^{\infty}(\Omega)$ is not expected, even for complex constant $A$ (see Section~\ref{s: extrapolation ex}) and it is not clear if there exists an operator core of bounded functions for $\oL^{A}_{p}$, $1<p<\infty$.
This makes the regularisation procedure used in \cite{CD-DivForm} and Section~\ref{s: N bil RN} useless for the proof of \eqref{eq: N bil}, and forces us to modify the Bellman-function-heat-flow method we used in \cite{CD-DivForm}; see Section~\ref{s: Thm bil discussion}. This is the main technical novelty of the present paper.

\subsection{Maximal regularity and functional calculus on domains} By means of the elementary properties of the function $(p,A)\mapsto \Delta_{p}(A)$ (see Lemma~\ref{l: N prop of Deltap}), a general result of Cowling, Doust, McIntosh, and Yagi \cite[Theorem~4.6 and Example~4.8]{CDMY} and the Dore-Venni theorem \cite{DoreVenni,PrussSohr} we deduce from Theorem~\ref{t: N bil} applied with $B=A^{*}$ and $\oV^{\prime}=\oV$ the following result; see Section~\ref{s: max funct}.

\begin{theorem}\label{t: N principal}
Suppose that $\Omega\subseteq\R^{d}$ is an open set. Let $\oV$ be one of the subspaces of Section~\ref{s: boundary}. Let $A\in \cA(\Omega)$ and $p>1$. Suppose that $A$ is $p$-elliptic (that is
$\Delta_{p}(A)>0$). Then the negative generator $\oL^{A}_{p}$ of $(T^{A,\oV}_{t})_{t>0}$ on $L^{p}(\Omega)$ admits a bounded holomorphic functional calculus of angle $<\pi/2$. As a consequence, $\oL^{A}_{p}$ has parabolic maximal regularity.
\end{theorem}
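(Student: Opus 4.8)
The plan is to obtain the bounded holomorphic functional calculus of $\oL^{A}_{p}$ from the bilinear inequality of Theorem~\ref{t: N bil}, along the lines by which \cite{CD-DivForm} handled the case $\Omega=\R^{d}$: first show that $\oL^{A}_{p}$ is sectorial on $L^{p}(\Omega)$ of some angle $\omega<\pi/2$; then feed the bilinear inequality (used with $B=A^{*}$ and $\oV^{\prime}=\oV$) into the square-function criterion of Cowling, Doust, McIntosh and Yagi \cite[Theorem~4.6 and Example~4.8]{CDMY}; conclude that $\oL^{A}_{p}$ has a bounded $H^{\infty}(\bS_{\mu})$ calculus for every $\mu\in(\omega,\pi/2)$; and finally pass to parabolic maximal regularity through bounded imaginary powers and the Dore--Venni theorem \cite{DoreVenni,PrussSohr} on the UMD space $L^{p}(\Omega)$.

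\emph{Sectoriality.} By Lemma~\ref{l: N prop of Deltap}, $p$-ellipticity of $A$ is equivalent to that of $A^{*}$ (hence $\Delta_{p}(A,A^{*})>0$), forces $1<p<\infty$, and is stable under small perturbations of $p$; since $\Delta_{2}(A)>0$ by \eqref{eq: N ellipticity} and $\{r:\Delta_{r}(A)>0\}$ is an interval, it contains a segment $[r_{0},r_{1}]$ with $r_{0}<\min\{2,p\}$ and $r_{1}>\max\{2,p\}$. By Proposition~\ref{p: N Nittka}, $(T^{A,\oV}_{t})_{t>0}$ is a contraction semigroup on each of $L^{r_{0}}(\Omega)$, $L^{p}(\Omega)$ and $L^{r_{1}}(\Omega)$, while on $L^{2}(\Omega)$ it is analytic and contractive in $\bS_{\theta_{2}}$. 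A Stein interpolation of the analytic $L^{2}$ family against the contractivity on $L^{r_{0}}$ and $L^{r_{1}}$ (as in \cite{CD-DivForm}) then shows that $(T^{A,\oV}_{t})$ extends to a bounded analytic semigroup on $L^{p}(\Omega)$, i.e. $\oL^{A}_{p}$ is sectorial of some angle $\omega<\pi/2$; the same holds for $A^{*}$, and consistency of the semigroups gives $\oL^{A^{*}}_{q}=(\oL^{A}_{p})^{*}$.

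\emph{From the bilinear inequality to the functional calculus.} Applying Theorem~\ref{t: N bil} with $B=A^{*}$ and $\oV^{\prime}=\oV$ (legitimate since $\Delta_{p}(A,A^{*})>0$) gives
\begin{equation*}
\int^{\infty}_{0}\!\!\int_{\Omega}\mod{\nabla T^{A,\oV}_{t}f(x)}\,\mod{\nabla T^{A^{*},\oV}_{t}g(x)}\wrt x\wrt t\leq C\norm{f}{p}\norm{g}{q},\qquad f,g\in(L^{p}\cap L^{q})(\Omega).
\end{equation*}
With $\psi(z)=z^{1/2}{\rm e}^{-z/2}$ one has $\psi(t\oL^{A}_{p})f=(t\oL^{A}_{p})^{1/2}T^{A,\oV}_{t/2}f$, and, since $((\oL^{A}_{p})^{1/2})^{*}=(\oL^{A^{*}}_{q})^{1/2}$ and $T^{A,\oV}_{t/2}f\in\Dom(\oL^{A}_{2})$ for $f\in L^{2}\cap L^{p}$, the form identity $\sk{\oL^{A}_{2}u}{v}=\int_{\Omega}\sk{A\nabla u}{\nabla v}$ ($u\in\Dom(\oL^{A}_{2})$, $v\in\oV$) yields $\sk{\psi(t\oL^{A}_{p})f}{\psi(t\oL^{A^{*}}_{q})g}=t\int_{\Omega}\sk{A\nabla T^{A,\oV}_{t/2}f}{\nabla T^{A^{*},\oV}_{t/2}g}$; integrating against $\wrt t/t$ and using \eqref{eq: Neumann bounded} and the bilinear inequality, this is $\leq 2\,\Lambda(A,A^{*})\,C\norm{f}{p}\norm{g}{q}$. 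Since $\psi$ is non-degenerate ($\int_{0}^{\infty}\psi(t)^{2}\,\frac{\wrt t}{t}=1$), the bilinear inequality has thus been turned into the square-function data on which \cite[Theorem~4.6 and Example~4.8]{CDMY} operates, and one concludes that $\oL^{A}_{p}$ admits a bounded $H^{\infty}(\bS_{\mu})$ calculus for every $\mu\in(\omega,\pi/2)$. (If $1\in\oV$ and $\Omega$ has a component of finite measure, $\oL^{A}_{p}$ has a kernel consisting of locally constant functions; one then runs \cite{CDMY} on $\overline{\Ran(\oL^{A}_{p})}$ and lets $F(\oL^{A}_{p})$ act by $F(0)$ on the kernel.) In particular $\norm{(\oL^{A}_{p})^{is}}{}\leq C_{\mu}\,{\rm e}^{\mu\mod{s}}$ for $s\in\R$, so by the Dore--Venni theorem \cite{DoreVenni,PrussSohr}, $L^{p}(\Omega)$ being UMD, $\oL^{A}_{p}$ has parabolic maximal regularity.

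\emph{Main obstacle.} The substantial difficulty lies in Theorem~\ref{t: N bil} itself; granting it, the argument above assembles standard tools, and what needs care is precisely where the irregularity of $\Omega$ is felt: the sectoriality of $\oL^{A}_{p}$ on $L^{p}(\Omega)$ must be obtained without elliptic regularity or Sobolev embeddings, purely from the contractivity on a range of exponents (Lemma~\ref{l: N prop of Deltap}, Proposition~\ref{p: N Nittka}) interpolated with the $L^{2}$-analyticity; and the passage from the bilinear inequality to the functional calculus via \cite{CDMY} must avoid the $L^{p}$ Kato square-root identity $\Dom((\oL^{A}_{p})^{1/2})=\oV$ (whose validity would require $\partial\Omega$ to be regular), so that the square-function estimates are verified through the semigroup gradient $t^{1/2}\nabla T^{A,\oV}_{t}$ rather than through powers of $\oL^{A}_{p}$ — which is exactly where the bilinear structure of Theorem~\ref{t: N bil} enters essentially. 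The possible non-injectivity of $\oL^{A}_{p}$ under pure Neumann conditions is minor, handled by the splitting above.
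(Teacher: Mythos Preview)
Your argument is correct and follows the same overall architecture as the paper (bilinear embedding $\Rightarrow$ CDMY square-function criterion $\Rightarrow$ bounded $H^\infty$ calculus of angle $<\pi/2$ $\Rightarrow$ Dore--Venni), but the implementation differs in two places.

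\emph{Sectoriality.} You obtain analyticity of $(T^{A,\oV}_t)$ on $L^p$ by Stein interpolation between $L^2$-analyticity and $L^{r_0}$/$L^{r_1}$-contractivity. The paper instead rotates the matrix: since $\varphi\mapsto\Delta_p(e^{i\varphi}A)$ is continuous (Lemma~\ref{l: N prop of Deltap}\,(\ref{L4 (iv)})), one has $\Delta_p(e^{\pm i\theta}A)>0$ for small $\theta>0$, and then Proposition~\ref{p: N Nittka} applied to $e^{i\theta}A$ gives contractivity of $T^{A,\oV}_{e^{i\theta}t}$ on $L^p$ directly (Lemma~\ref{l: N analytic sem}). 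This is more elementary than Stein interpolation and yields contractivity rather than mere boundedness in the sector.

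\emph{Passage to CDMY.} You apply Theorem~\ref{t: N bil} to the pair $(A,A^*)$ in real time and feed the resulting estimate $\int_0^\infty|\langle \oL^A_pT^{A,\oV}_tf,g\rangle|\,dt\leqsim\|f\|_p\|g\|_q$ (equivalently, $\psi(z)=ze^{-z}$) into \cite[Theorem~4.6]{CDMY}; this is legitimate once sectoriality at angle $<\pi/2$ is known. The paper instead applies Theorem~\ref{t: N bil} to the rotated pairs $(e^{\pm i\theta}A,e^{\mp i\theta}A^*)$, obtaining the complex-time estimates $\int_0^\infty|\langle \oL^A_pT^{A,\oV}_{te^{\pm i\theta}}f,g\rangle|\,dt\leqsim\|f\|_p\|g\|_q$, which match exactly the functions $\psi_\pm(z)=ze^{-e^{\pm i\theta}z}$ of \cite[Example~4.8]{CDMY} and give $\omega_{H^\infty}(\oL^A_p)\leq\pi/2-\theta$ directly. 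The advantage of the paper's route is that the single rotation trick simultaneously produces analyticity \emph{and} the pair $\psi_\pm$ required in Example~4.8, so one does not need to argue separately that the real-time function $ze^{-z}$ already suffices in Theorem~4.6 once sectoriality is in hand. Your detour through $\psi(z)=z^{1/2}e^{-z/2}$ and square roots is unnecessary---the identity $\langle\psi(t\oL^A_p)f,\psi(t\oL^{A^*}_q)g\rangle=t\langle\oL^A_pT^{A,\oV}_tf,g\rangle$ shows you are really using $ze^{-z}$---but it is not wrong.
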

The novelty of Theorem~\ref{t: N principal} lies in the fact that we are able to prove parabolic maximal regularity for some $p\neq2$ {\sl without assuming} any regularity of the boundary $\partial\Omega$, {\sl nor} the existence of a Sobolev embedding
\begin{equation}\label{eq: N S embedding}
\tag{${\rm SE}_{q}$}
\oV\hookrightarrow L^{q}(\Omega),
\end{equation}
for some $q>2$. Hence our results complement those of \cite{Egert2018, Elst2019}; see Section~\ref{s: comparison}.
\smallskip

Two results of Kunstmann \cite[Example~2.4 and Remark~3.2]{Kunstmann3} and \cite{kunstmann2} show that the range of $p$'s in Theorem~\ref{t: N principal} is optimal even for the class $\{\oL^{A}: A\in\cA(\Omega)\cap C^{\infty}(\Omega), 
\ \oV=W^{1,2}(\Omega), |\Omega|<+\infty\}$. This means that, given $d\geq 2$, there exist $\Omega_{0}\subset\R^{d}$ of finite measure and $A_{0}\in\cA(\Omega)$ with smooth coefficients such that $\oL^{A_{0}}$ subject to Neumann boundary conditions in $\Omega_{0}$ has parabolic maximal regularity in $L^{p}(\Omega_{0})$ if and only if $\Delta_{p}(A_{0})>0$; see Section~\ref{s: sharp}.

\section{Heat-flow monotonicity and generalised convexity}\label{s: Neumann gen conv} 
For proving the bilinear inequality \eqref{eq: N bil} we use a variant of the Bellman-function-heat-flow method originally introduced by Petermichl and Volberg in \cite{PV} and Nazarov and Volberg in \cite{NV}, and extended in \cite{DV-Sch, Dv-kato,CD-Riesz}. Here we further refine the ``complex-time'' version of this method that we developed in \cite{CD-mult, CD-OU, CD-DivForm}. This new refinement addresses a major technical issue (see Section~\ref{s: Thm bil discussion}).

\subsection{The Bellman function of Nazarov and Treil}
In the context of the present paper this method consists of studying the monotonicity of the flow\footnote{In the definition of $\cE$ we implicitly identify $T^{A,\oV}_{t}f$ and $T^{B,\oV^{\prime}}_{t}g$ with their real counterparts; see Section~\ref{s: N heat-flow} for a more accurate statement.} 
\begin{equation}\label{eq: N flow}
\cE(t)=\int_{\Omega}\cQ(T^{A,\oV}_{t}f,T^{B,\oV^{\prime}}_{t}g)
\end{equation}
associated with a particular explicit {\it Bellman function} $\cQ$ invented by Nazarov and Treil \cite{NT} in 1995. Here we use a simplified variant introduced in \cite{Dv-kato} which comprises only two variables:
\begin{equation}\label{eq: N Bellman}
\cQ(\zeta,\eta)=
|\zeta|^p+|\eta|^{q}+\delta
\begin{cases}
 |\zeta|^2|\eta|^{2-q},& |\zeta|^p\leqslant |\eta|^q;\\
 (2/p)\,|\zeta|^{p}+\left(
 2/q-1\right)|\eta|^{q},&|\zeta|^p\geqslant |\eta|^q\,,
\end{cases}
\end{equation}
where $p>2$, $q=p/(p-1)$, $\zeta,\eta\in\R^{2}$ and $\delta>0$ is a positive parameter that will be fixed later. Recall from \cite{CD-DivForm} that $\cQ\in C^{1}(\R^{4})\cap C^{2}(\R^{4}\setminus\Upsilon)$, where
$$
\Upsilon=\{\eta=0\}\cup\{|\zeta|^p=|\eta|^q\}\,.
$$
For $(\zeta,\eta)\in\R^2\times\R^2$ we have
\begin{equation}
\label{eq: N 5}
\aligned
0\leqslant \cQ(\zeta,\eta) & \leqsim_{p,\delta}\,\left(|\zeta|^p+|\eta|^q\right),\\
|(\partial_{\zeta}\cQ)(\zeta,\eta)| & \leqsim_{p,\delta}\, \max\{|\zeta|^{p-1},|\eta|\},\\
|(\partial_{\eta}\cQ)(\zeta,\eta)| & \leqsim_{p,\delta}\, |\eta|^{q-1}\,,
\endaligned
\end{equation}
where $\partial_\zeta=\left(\partial_{\zeta_1}-i\partial_{\zeta_2}\right)/2$ and $\partial_\eta=\left(\partial_{\eta_1}-i\partial_{\eta_2}\right)/2$.
\medskip

The construction of the original Nazarov--Treil function was one of the earliest examples of the so-called Bellman function technique, which was introduced in harmonic analysis shortly beforehand by Nazarov, Treil and Volberg \cite{NTV}.  
The name ``Bellman function'' stems from the stochastic optimal control, see \cite{NTV1} for details. The same paper \cite{NTV1} explains the connection between the Nazarov--Treil--Volberg approach and the earlier work of Burkholder on martingale inequalities; see \cite{Bu1} and \cite{Bu2,Bu4}. For an in-depth treatise on recent advances in martingale inequalities the reader is referred to \cite{Os}.
If interested in the genesis of Bellman functions and the overview of the method, the reader is also referred to \cite{V,NT,W}. Recent applications of Bellman-heat-flow methods include \cite{DomPe,PSW,CD-Riesz, CD-mult, CD-OU, CD-DivForm, MaSp, Dah, Wrobel1, BDFS,DV, DV-Sch,DraVol}.
\medskip

A formal passage of the time derivative under the integral sign in \eqref{eq: N flow} and a more delicate formal integration by parts (see the discussion in Section~\ref{s: N bill dom}) suggest that the monotonicity of $\cE$ is related to the convexity properties of $\cQ$; see Section~\ref{s: N heat-flow}. Indeed, it naturally leads to a new notion of convexity called {\it generalised convexity with respect to the matrices $(A,B)$}; in short {\it $(A,B)$-convexity} \cite{CD-mult,CD-OU,CD-DivForm}.

Owing to the tensor structure of $\cQ$, the generalised convexity of $\cQ$ is related to that of its elementary building blocks (see \cite{CD-DivForm} and Section~\ref{s: N powers}): the power functions
$$
F_{r}(\gamma)=|\gamma|^{r},\quad \gamma\in\R^{2},\quad r>0.
$$
It turns out  that $F_{p}$ is $A$-convex if and only if $\Delta_{p}(A)\geq 0$, and $\cQ$ is strictly $(A,B)$-convex provided that $\Delta_{p}(A,B)>0$; see \cite{CD-DivForm} and Theorem~\ref{t: N B gen conv}.
\medskip

We now formalise the notion of generalised convexity.

\subsection{Real form of complex operators}We explicitly identify $\C^{d}$ with $\R^{2d}$ as follows. 
For each $d\in\N_{+}$ consider the operator
$\cV_{d}:\C^{d}\rightarrow\R^{d}\times\R^{d}$, defined by 
$$
\cV_{d}(\xi_{1}+i\xi_{2})=
(\xi_{1},\xi_{2}),\quad \xi_{1},\xi_{2}\in\R^{d}.
$$
Let $k,d\in\N_{+}$. We define another identification operator
$$
\cW_{k,d}:\underbrace{\C^{d}\times\cdots\times\C^{d}}_{k-{\rm times}}\longrightarrow \underbrace{\R^{2d}\times\cdots\times\R^{2d}}_{k-{\rm times}},
$$
 by the rule
$$
\cW_{k,d}(\xi^{1},\dots,\xi^{k})
=\left(\cV_{d}(\xi^{1}),\dots,\cV_{d}(\xi^{k})\right),\quad \xi^{j}\in\C^{d},\ j=1,\dots,k.
$$
Denote by $J$ the standard symplectic operator on $\R^{2d}$ given by
$$
J=\Jei.
$$
The operator $J$ is associated to the standard complex structure on $\R^{d}\times\R^{d}$. Namely $J$ is the real form of the multiplication by $i$: $\cV_{d}(i\xi)=J\cV_{d}(\xi)$ and

\begin{equation*}
\sk{\xi}{\xi^{\prime}}_{\C^{d}}=\sk{\cV_{d}(\xi)}{\cV_{d}(\xi^{\prime})}_{\R^{2d}}+i\,\sk{\cV_{d}(\xi)}{J\cV_{d}(\xi^{\prime})}_{\R^{2d}},\quad \xi,\xi^{\prime}\in\C^{d}.
\end{equation*}
If $A\in\C^{d\times d}$ we shall frequently use its real form:
$$
\cM(A)=\cV_{d}A\cV_{d}^{-1}=\left(
\begin{array}{rr}
\Re A  & -\Im A\\
\Im A  & \Re A
\end{array}
\right)\,.
$$
Observe that $\cM(A^*)=\cM(A)^T$ and $\cM(AB)=\cM(A)\cM(B)$. For $\xi,\sigma\in\C^{d}$ we have 
\begin{equation*}
\sk{A\xi}{\sigma}_{\C^{d}}=\sk{\cM(A)\cV_{d}(\xi)}{\cV_{d}(\sigma)}_{\R^{2d}}+i\sk{J^{t}\cM(A)\cV_{d}(\xi)}{\cV_{d}(\sigma)}_{\R^{2d}},
\end{equation*}
In particular,
\begin{equation}\label{eq: real form}
\aligned
\Re\sk{A\xi}{\sigma}_{\C^{d}}=
\sk{\cM(A)\cV_{d}(\xi)}{\cV_{d}(\sigma)}_{\R^{2d}}.
\endaligned
\end{equation}
\subsection{Convexity with respect to complex matrices}
Let $k\in\N_{+}$ and $V\subseteq \R^{2k}$ a open subset. Suppose that $\Phi:V\rightarrow \R$ is of class $C^{2}$. Denote by $(D^{2}\Phi)(\omega)$ the Hessian matrix of $\Phi$ at the point $\omega\in V$. Let $d\in\N_{+}$ and $A_{1},\dots,A_{k}\in \C^{d\times d}$. Denote by $\cM(A^{*}_{1})\oplus\cdots\oplus\cM(A^{*}_{k})$ the $2kd\times 2kd$ block diagonal real matrix with the $2d\times 2d$ blocks $\cM(A^{*}_{1}),\dots,\cM(A^{*}_{k})$ along the main diagonal. For each $\omega\in V$, we define the new matrix
$$
H^{(A_{1},\dots,A_{k})}_{\Phi}(\omega)=\left(\cM(A^{*}_{1})\oplus\cdots\oplus\cM(A^{*}_{k})\right)\cdot\left((D^{2}\Phi)(\omega)\otimes I_{\R^{d}}\right).
$$
We call $H^{(A_{1},\dots,A_{k})}_{\Phi}(\omega)$ the {\it generalised Hessian} of $\Phi$ at the point $\omega$ with respect to the complex matrices $A_{1},\dots,A_{k}$. We say that  $\Phi$ is $(A_{1},\dots,A_{k})$-convex in $V$ if the quadratic form associated with $H^{(A_{1},\dots,A_{k})}_{\Phi}(\omega)$ is nonnegative at every $\omega\in V$. We shall often say that $\Phi$ is $(A_{1},\dots,A_{k})$-convex in a single point $\omega\in V$, if the condition above holds for that particular $\omega$. The same for $(A_{1},\dots, A_{k})$-convexity in a subset of $V$.

In accordance with \cite{CD-DivForm}, we introduce a special notation for denoting the quadratic form associated with the generalised Hessians. Given $A_{1},\dots,A_{k}\in\C^{d\times d}$ and $\Phi:V\subseteq\R^{2k}\rightarrow \R$, we define
$$
H^{(A_{1},\dots,A_{k})}_{\Phi}[\omega;\Xi]=
 \sk{H^{(A_{1},\dots,A_{k})}_{\Phi}(\omega)\Xi}{\Xi}_{\R^{2kd}},\quad \omega\in\R^{2k},\quad \Xi\in\R^{2kd}.
$$

We maintain the same notation when instead of matrices we consider matrix-valued functions $A_{1},\dots,A_{k}\in L^{\infty}(\Omega;\C^{d\times d})$; in this case however we require that all the conditions are satisfied for a.e. $x\in\Omega$.

\subsection{Heat-flow monotonicity}\label{s: N heat-flow} The main reason for introducing the notion of convexity with respect to complex matrices (generalised convexity) is its link with the monotonicity of certain functionals associated with semigroups \cite{CD-mult,CD-OU,CD-DivForm}. In what follows we explain this link at a formal level. In the applications, the justification of the formal passages is part of the problem (see the discussion in Section~\ref{s: Thm bil discussion}), and it will not be addressed here. 

Let $\Omega\subseteq\R^{d}$, $A_{1},\dots,A_{k}\in\cA(\Omega)$ and $\oV_{1},\dots,\oV_{k}$ of the type described in Section~\ref{s: boundary}. Let $\Phi: \R^{2k}\rightarrow \R_{+}$ be of class, say, $C^{2}$. Define $\Phi_{\cW}=\Phi\circ \cW_{k,1}$. Given $f_{1},\dots,f_{k}\in L^{2}(\Omega)$, define the function
$$
\cE(t)=\int_{\Omega}\Phi_{\cW}\left(T^{A_{1},\oV_{1}}_{t}f_{1},\dots,T^{A_{k},\oV_{k}}_{t}f_{k}\right),\quad t>0. 
$$
\begin{itemize}
\item[a)] Suppose that we can differentiate and interchange derivative and integral. Then a calculation (see \cite{CD-DivForm}) shows that
\begin{equation}\label{eq: N flow derivative}
-\cE^{\prime}(t)=\int_{\Omega}2\,\Re\left[\sum^{k}_{j=1}(\partial_{\zeta^{j}}\Phi)_{\cW}\left(T^{A_{1},\oV_{1}}_{t}f_{1},\dots,T^{A_{k},\oV_{k}}_{t}f_{k}\right)\oL^{A_{j}}T^{A_{j},\oV_{j}}_{t}f_{j}\right],
\end{equation}
where $\zeta^{j}=(\zeta^{j}_{1},\zeta^{j}_{2})\in\R^{k}\times\R^{k}$ and $\partial_{\zeta^{j}}=\left(\partial_{\zeta^{j}_1}-i\partial_{\zeta^{j}_2}\right)/2$.
\item[b)] Suppose that each $(\partial_{\zeta^{j}}\Phi)_{\cW}(T^{A_{1}}_{t}f_{1},\dots,T^{A_{k}}_{t}f_{k})$ belongs to the form domain $\oV_{j}$. Then we can integrate by parts in the sense of \eqref{eq: ibp} on the right-hand side of \eqref{eq: N flow derivative} and, by means of another calculation (see \cite{CD-DivForm}), we get
$$
-\cE^{\prime}(t)=\int_{\Omega}H^{(A_{1},\dots,A_{k})}_{\Phi}\left[\cW_{k,1}\left(T^{A_{1},\oV_{1}}_{t}f_{1},\dots,T^{A_{k},\oV_{k}}_{t}f_{k}\right);\cW_{k,d}\left(\nabla T^{A_{1},\oV_{1}}_{t}f_{1},\dots,\nabla T^{A_{k},\oV_{k}}_{t}f_{k}\right)\right].
$$
\end{itemize}
It follows that if $\Phi$ is $(A_{1},\dots,A_{k})$-convex on $\R^{2k}$ then the function $\cE$ is nonincreasing on $(0,+\infty)$.

When $\Phi$ is {\it strictly} $(A_{1},\dots,A_{k})$-convex and satisfies a suitable size estimate, this formal method can be used for proving bilinear inequalities in the spirit of \cite{CD-DivForm,CD-mult,CD-OU}.

\subsection{Generalised convexity of power functions and the Bellman function of Nazarov and Treil}
\label{s: N powers} 
Let $\Omega\subseteq\R^{d}$ an open subset, $A\in\cA(\Omega)$ and $p>1$. Recall the definition of the number $\Delta_{p}(A)$ in \eqref{eq: N Deltap}. 

The following facts will be used in this paper. They were proven in \cite{CD-DivForm}. 
\begin{lemma}{\cite{CD-DivForm}}
\label{l: N prop of Deltap}
Let $A\in\cA(\Omega)$, $p>1$ and $q=p/(p-1)$. Then
\begin{enumerate}[{\rm (i)}]
\item 
\label{L4 (i)}
$\Delta_{p}(A)=\Delta_{q}(A)$
\item
\label{L4 (ii)}
$\Delta_{p}(A)\geq 0$ if and only if $\Delta_{p}(A^{*})\geq 0$.
The same holds for strict inequalities.
\item 
\label{L4 (iii)}
The function $p\mapsto\Delta_p(A)$ is Lipschitz continuous and nonincreasing on $[2,\infty)$.
\item
\label{L4 (iv)}
 The function $\varphi\mapsto\Delta_{p}(e^{i\varphi}A)$ is Lipschitz continuous in the interval $(-\pi/2,\pi/2)$.
\item 
\label{L4 (v)}
For a.e. $x\in\Omega$ and every $\zeta\in\R^{2}\setminus\{0\}$ and $\xi\in\C^{d}$ we have
\begin{equation*}
H^{A(x)}_{F_{p}}[\zeta;\cV_{d}(\xi)]=\frac{p^{2}}{2}|\zeta|^{p-2}\Re\sk{A(x)e^{-i\arg(\zeta)}\xi}{\cI_{p}e^{-i\arg(\zeta)}\xi}\,.
\end{equation*}
In particular,
$$
\underset{x\in\Omega}{{\rm ess}\inf}\min_{|X|=1}H^{A(x)}_{F_{p}}[\zeta;X]= \frac{p^{2}}{2}|\zeta|^{p-2}\Delta_{p}(A),
$$
for all $\zeta\in\R^{2}\setminus\{0\}$.
\item 
\label{L4 (vi)}
$\Im A(x)=0$ for a.e. $x\in\Omega$  if and only if $\Delta_{p}(A)\geq 0$ for all $p>1$.
\end{enumerate}
\end{lemma}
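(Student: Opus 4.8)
\emph{Proof plan.} Since the six assertions are elementary and were already established in \cite{CD-DivForm}, I only describe the arguments I would use. Everything hinges on a few facts about the $\R$-linear map $\cI_{p}$: writing $\alpha=1-2/p$ one has $\cI_{p}\xi=\xi+\alpha\overline{\xi}$, and because $1-2/q=-\alpha$ this gives the identities $\cI_{q}(i\xi)=i\,\cI_{p}(\xi)$ and $\cI_{p}\cI_{q}=\tfrac{4}{pq}\,I_{\C^{d}}$; moreover $\cI_{p}$ is self-adjoint for the real inner product $\Re\sk{\cdot}{\cdot}_{\C^{d}}$, and $|\cI_{p}\xi|$ is comparable to $|\xi|$ with constants depending only on $p$. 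Since $\sk{A\xi}{\overline{\xi}}_{\C^{d}}$ is $\C$-bilinear in $\xi$,
$$
\Re\sk{A\xi}{\cI_{p}\xi}_{\C^{d}}=\Re\sk{A\xi}{\xi}_{\C^{d}}+\alpha\,\Re\sk{A\xi}{\overline{\xi}}_{\C^{d}},
$$
and on the unit sphere $\bigl|\Re\sk{A\xi}{\overline{\xi}}\bigr|\leq\Lambda$ by \eqref{eq: Neumann bounded}. Assertion (v) is then a direct computation: for $\gamma\neq0$ one has $(D^{2}F_{p})(\gamma)=p|\gamma|^{p-2}\bigl(I_{\R^{2}}+(p-2)\widehat\gamma\,\widehat\gamma^{T}\bigr)$ with $\widehat\gamma=\gamma/|\gamma|$; substituting into $H^{A}_{F_{p}}(\omega)=\cM(A^{*})\bigl((D^{2}F_{p})(\omega)\otimes I_{\R^{d}}\bigr)$, evaluating the quadratic form on $\cV_{d}(\xi)$ by means of \eqref{eq: real form}, and diagonalising $(D^{2}F_{p})(\zeta)$ by the planar rotation through $-\arg(\zeta)$ produces exactly the claimed identity (the rotation is what generates the factor $e^{-i\arg(\zeta)}$ and the operator $\cI_{p}$), and minimising over $|\xi|=1$ and invoking \eqref{eq: N Deltap} yields the ``in particular'' clause.

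For (i): since $\xi\mapsto i\xi$ is a bijection of the unit sphere and $\sk{ia}{ib}_{\C^{d}}=\sk{a}{b}_{\C^{d}}$, the identity $\cI_{q}(i\xi)=i\cI_{p}(\xi)$ shows $\Re\sk{A\xi}{\cI_{q}\xi}=\Re\sk{A(i\xi)}{\cI_{p}(i\xi)}$, so $\Delta_{q}(A)=\Delta_{p}(A)$. For (ii): the complex adjoint, the self-adjointness of $\cI_{p}$, and $\cI_{p}\cI_{q}=\tfrac{4}{pq}I$ give, with $\zeta=\cI_{p}\eta$,
$$
\Re\sk{A^{*}\eta}{\cI_{p}\eta}=\Re\sk{A\cI_{p}\eta}{\eta}=\tfrac{pq}{4}\,\Re\sk{A\zeta}{\cI_{q}\zeta};
$$
since $\eta\mapsto\cI_{p}\eta$ is an isomorphism with $|\cI_{p}\eta|$ comparable to $|\eta|$, nonnegativity (resp.\ strict positivity) of the relevant essential infimum of minima is transferred, and together with (i) this proves (ii).

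For (iii): by the displayed identity, $\Delta_{p}(A)$ is, as a function of $\alpha=1-2/p$, an essential infimum of minima of affine functions whose slopes are bounded by $\Lambda$ in modulus; hence it is concave and $\Lambda$-Lipschitz in $\alpha$, so Lipschitz in $p$ on $[2,\infty)$ because there $|\alpha(p)-\alpha(p')|=2|1/p-1/p'|\leq\tfrac12|p-p'|$. For the monotonicity I would note that at a pair $(x,\xi)$ minimising $\Re\sk{A\xi}{\xi}+\alpha\Re\sk{A\xi}{\overline\xi}$ with $\alpha>0$, optimality under the rotations $\xi\mapsto e^{i\theta}\xi$ (which fix $\Re\sk{A\xi}{\xi}$ and multiply $\sk{A\xi}{\overline\xi}$ by $e^{2i\theta}$) forces $\Re\sk{A\xi}{\overline\xi}=-\bigl|\sk{A\xi}{\overline\xi}\bigr|\leq0$; inserting this pair into the functional with $\alpha'\geq\alpha$ then gives
$$
\Delta_{p'}(A)\leq\Re\sk{A\xi}{\xi}+\alpha'\Re\sk{A\xi}{\overline\xi}\leq\Re\sk{A\xi}{\xi}+\alpha\,\Re\sk{A\xi}{\overline\xi}=\Delta_{p}(A),
$$
so $\Delta_{p}(A)$ is nonincreasing on $[2,\infty)$ (the endpoint $p=2$ by continuity). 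Assertion (iv) is easier: for fixed $x$ and $\xi$, $\varphi\mapsto\Re\sk{e^{i\varphi}A\xi}{\cI_{p}\xi}=\Re\bigl(e^{i\varphi}\sk{A\xi}{\cI_{p}\xi}\bigr)$ is a single sinusoid of amplitude $\leq2\Lambda$, hence uniformly Lipschitz in $\varphi$, and the infimum over $x$ and over $|\xi|=1$ keeps this Lipschitz bound.

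Finally, (vi): if $\Im A=0$, expanding with $\xi=\xi_{1}+i\xi_{2}$ (the cross terms being real parts of purely imaginary quantities, hence zero) gives $\Re\sk{A\xi}{\cI_{p}\xi}=(1+\alpha)\Re\sk{A\xi_{1}}{\xi_{1}}+(1-\alpha)\Re\sk{A\xi_{2}}{\xi_{2}}\geq\lambda(1-|\alpha|)|\xi|^{2}>0$ by \eqref{eq: N ellipticity}, so $\Delta_{p}(A)\geq0$ for every $p>1$. Conversely, assume $\Delta_{p}(A)\geq0$ for all $p>1$; since $\cI_{p}\xi\to\xi+\overline\xi=2\Re\xi$ uniformly on the unit sphere as $p\to\infty$, we get $\Delta_{\infty}(A)\geq0$, which with $\xi=u+iv$ ($u,v\in\R^{d}$) reads $\Re\sk{Au}{u}\geq\sk{(\Im A)v}{u}$ a.e.\ for all real $u,v$; as the right-hand side is linear in $v$ it must vanish identically, so $\sk{(\Im A)v}{u}=0$ for all real $u,v$, i.e.\ $\Im A=0$ a.e. The only steps that require a genuine idea rather than computation are the rotation argument in (iii) and, in (vi), the passage $p\to\infty$ followed by the unboundedness of a linear functional; beyond that, the only subtlety is the routine bookkeeping with essential infima and approximate minimisers.
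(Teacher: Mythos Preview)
The paper does not give its own proof of this lemma; it merely cites \cite{CD-DivForm} and moves on. So there is no ``paper's approach'' to compare against, and your task reduces to checking that your sketch is sound.

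It is. The algebraic identities for $\cI_p$ that drive (i) and (ii) are correct (in particular $\cI_p(i\xi)=i\,\cI_q(\xi)$, $\cI_p\cI_q=\tfrac{4}{pq}I$, and self-adjointness of $\cI_p$ for $\Re\sk{\cdot}{\cdot}$), and the Lipschitz arguments in (iii) and (iv) via infima of uniformly Lipschitz families are standard. Your computation for (v) and the $p\to\infty$ limit for (vi) are right; in the converse of (vi) note that you actually obtain $\Re\sk{Au}{u}\geq\sk{(\Im A)v}{u}_{\R^d}$ for all real $u,v$, whence the linear-in-$v$ term must vanish, exactly as you say.

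One small clean-up in (iii): rather than invoking an (approximate) minimising pair $(x,\xi)$, it is tidier to first fix $x$ and observe that for each $\xi$ the minimum over the circle orbit $\{e^{i\theta}\xi\}$ equals $\Re\sk{A(x)\xi}{\xi}-\alpha\,|\sk{A(x)\xi}{\bar\xi}|$ (your rotation computation), so $\min_{|\xi|=1}\Re\sk{A(x)\xi}{\cI_p\xi}=\min_{|\xi|=1}\bigl(\Re\sk{A(x)\xi}{\xi}-\alpha|\sk{A(x)\xi}{\bar\xi}|\bigr)$, which is manifestly nonincreasing in $\alpha\geq0$ for every $x$; taking the essential infimum over $x$ preserves monotonicity. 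This avoids any bookkeeping with approximate minimisers.
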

\begin{remark}
In principle, Proposition~\ref{p: N Nittka} can be deduced by combining Lemma~\ref{l: N prop of Deltap}~(\ref{L4 (v)}) with the heat-flow method of Section~\ref{s: N heat-flow} applied with $\Phi=F_{p}$.  However this is just a formal argument and can not be directly used. This problem can be fixed by using a result of Nittka \cite[Theorem~4.1]{Nittka}; see \cite[Theorem~1.3]{CD-DivForm} and the beginning of Section~\ref{s: sem emb}.
\end{remark}
Recall the notation \eqref{eq: DplAB}. 
The next theorem establishes a link between 
generalized convexity of power functions $|\zeta|^p$ and $|\eta|^q$ and {\it strict} generalised convexity of the Bellman function $\cQ=\cQ_{p,\delta}$ defined in \eqref{eq: N Bellman}. 

\begin{theorem}[{\cite[Theorem 5.2]{CD-DivForm}}]
\label{t: N B gen conv}
Suppose that $p\geq2$ and $A,B\in \cA(\Omega)$ satisfy $\Delta_p(A,B)>0$. 
Then there exists $\delta=\delta
(\Delta_p(A,B),\lambda(A,B),\Lambda(A,B))
\in(0,1)$ such that the function $\cQ$ is strictly $(A,B)$-convex in $\R^{4}\setminus\Upsilon$. More specifically, for almost every $x\in\Omega$ we have
$$
H_{\cQ}^{(A(x),B(x))}[\omega;(X,Y)]
\geq \frac{\Delta_p(A,B)}{5}\cdot\frac{\lambda(A,B)}{\Lambda(A,B)}|X||Y|\,,
$$
for any $\omega\in\R^{4}\setminus\Upsilon$ and $X,Y\in\R^{2d}$.
\end{theorem}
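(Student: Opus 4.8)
The plan is to exploit the tensor structure of $\cQ$, reducing the generalised Hessian of $\cQ$ to those of its building blocks $F_{p}$ and $F_{q}$ (about which Lemma~\ref{l: N prop of Deltap}~(\ref{L4 (v)}) gives exact information), and then to treat the two regions $\{|\zeta|^{p}\geq|\eta|^{q}\}$ and $\{|\zeta|^{p}\leq|\eta|^{q}\}$ separately. Fix a point $x\in\Omega$ at which $A$ and $B$ are elliptic and $p$-elliptic (a full-measure set), write $A=A(x)$, $B=B(x)$, $\lambda=\lambda(A,B)$, $\Lambda=\Lambda(A,B)$, and assume $p>2$ — the case $p=2$ is elementary, since then the two branches of \eqref{eq: N Bellman} coincide, $\cQ=(1+\delta)|\zeta|^{2}+|\eta|^{2}$, and the claim follows from \eqref{eq: N ellipticity} and $2\sqrt{ab}\leq a+b$. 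Writing $\cQ=F_{p}(\zeta)+F_{q}(\eta)+\delta\cR$, where $\cR$ denotes the piecewise term in \eqref{eq: N Bellman}, and using additivity of the generalised Hessian together with the fact that $F_{p}$ and $F_{q}$ each depend on only one of the variables, one gets
$$
H^{(A,B)}_{\cQ}[\omega;(X,Y)]=H^{A}_{F_{p}}[\zeta;X]+H^{B}_{F_{q}}[\eta;Y]+\delta\,H^{(A,B)}_{\cR}[\omega;(X,Y)],
$$
where, by Lemma~\ref{l: N prop of Deltap}~(\ref{L4 (v)}) and~(\ref{L4 (i)}) (to rewrite $\Delta_{q}(B)=\Delta_{p}(B)$), the first two terms are nonnegative with $H^{A}_{F_{p}}[\zeta;X]\geq\tfrac{p^{2}}{2}|\zeta|^{p-2}\Delta_{p}(A)|X|^{2}$ and $H^{B}_{F_{q}}[\eta;Y]\geq\tfrac{q^{2}}{2}|\eta|^{q-2}\Delta_{p}(B)|Y|^{2}$.

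On $\{|\zeta|^{p}\geq|\eta|^{q}\}$ the term $\cR=(2/p)F_{p}(\zeta)+(1-2/p)F_{q}(\eta)$ is a convex combination (note $2/q-1=1-2/p$), so $\delta H^{(A,B)}_{\cR}$ is a nonnegative combination of the two building blocks; I would therefore bound $H^{(A,B)}_{\cQ}$ below by $\tfrac{p^{2}}{2}|\zeta|^{p-2}\Delta_{p}(A)|X|^{2}+\tfrac{q^{2}}{2}|\eta|^{q-2}\Delta_{p}(B)|Y|^{2}$ and apply $2\sqrt{ab}\leq a+b$. The arithmetic identity $\tfrac{q(p-2)}{p}+(q-2)=0$, valid because $q=p/(p-1)$ (equivalently $pq=p+q$), forces $|\zeta|^{p-2}|\eta|^{q-2}\geq1$ whenever $|\zeta|^{p}\geq|\eta|^{q}$, so this region yields $H^{(A,B)}_{\cQ}[\omega;(X,Y)]\geq pq\sqrt{\Delta_{p}(A)\Delta_{p}(B)}\,|X||Y|\geq 4\Delta_{p}(A,B)|X||Y|$, using $pq\geq4$; this already exceeds the required bound, and it uses neither $\delta$ nor the finer form of $\cR$.

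The hard part will be the region $\{|\zeta|^{p}\leq|\eta|^{q}\}$, where $\cR=|\zeta|^{2}|\eta|^{2-q}$ and both building blocks degenerate — their coefficients $|\zeta|^{p-2}$ and $|\eta|^{q-2}$ tending to $0$ as $|\zeta|\to0$, resp.\ $|\eta|\to\infty$ — so the entire cross term must be produced by $\delta H^{(A,B)}_{\cR}$. Differentiating $|\zeta|^{2}|\eta|^{2-q}$ shows that $D^{2}\cR$ has $\zeta\zeta$-block $2|\eta|^{2-q}I_{\R^{2}}$, $\zeta\eta$-block $2(2-q)|\eta|^{-q}\zeta\eta^{T}$, and $\eta\eta$-block $(2-q)|\zeta|^{2}|\eta|^{-q}(I_{\R^{2}}-q|\eta|^{-2}\eta\eta^{T})$, the last matrix having eigenvalues $1$ and $1-q<0$. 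Multiplying by $\cM(A^{*})\oplus\cM(B^{*})$ (tensored with $I_{\R^{d}}$) and testing against $(X,Y)$, the $\zeta\zeta$-part of $H^{(A,B)}_{\cR}$ is $\geq2\lambda|\eta|^{2-q}|X|^{2}$ by \eqref{eq: N ellipticity}, the $\zeta\eta$-part is bounded in modulus by $C\Lambda|\zeta||\eta|^{1-q}|X||Y|$, and the $\eta\eta$-part is $\geq-C\Lambda|\zeta|^{2}|\eta|^{-q}|Y|^{2}$. The same power bookkeeping (now with $1-q=-1/(p-1)$ and $q/p=1/(p-1)$) gives $|\zeta||\eta|^{1-q}\leq1$ on this region, so the $\zeta\eta$-contribution to $H^{(A,B)}_{\cQ}$ is at worst $-C\delta\Lambda|X||Y|$, and the ratio of the negative $\eta\eta$-coefficient of $\delta H^{(A,B)}_{\cR}$ to the $Y$-building-block coefficient $\tfrac{q^{2}}{2}|\eta|^{q-2}\Delta_{p}(B)$ is a fixed constant times $\delta\,\tfrac{\Lambda}{\Delta_{p}(B)}\,(|\zeta||\eta|^{1-q})^{2}\leq C\delta\Lambda/\Delta_{p}(A,B)$; hence for $\delta$ small the negative $\eta\eta$-part is absorbed by half of the $Y$-building-block. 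What remains, up to nonnegative terms, is
$$
H^{(A,B)}_{\cQ}[\omega;(X,Y)]\geq 2\delta\lambda|\eta|^{2-q}|X|^{2}+\tfrac{q^{2}}{4}|\eta|^{q-2}\Delta_{p}(B)|Y|^{2}-C\delta\Lambda|X||Y|,
$$
and since $|\eta|^{2-q}\cdot|\eta|^{q-2}=1$, the inequality $2\sqrt{ab}\leq a+b$ turns the first two terms into a genuine, $\omega$-independent cross term $\geq c\sqrt{\delta\lambda\Delta_{p}(B)}\,|X||Y|$. Finally I would fix $\delta\in(0,1)$ small, depending only on $\Delta_{p}(A,B)$, $\lambda$ and $\Lambda$, so that $c\sqrt{\delta\lambda\Delta_{p}(B)}\geq 2C\delta\Lambda$; then the positive cross term dominates the error, and keeping track of the constants gives the asserted $H^{(A,B)}_{\cQ}[\omega;(X,Y)]\geq\tfrac{\Delta_{p}(A,B)}{5}\cdot\tfrac{\lambda(A,B)}{\Lambda(A,B)}|X||Y|$. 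The one genuinely delicate point, as flagged, is this exponent bookkeeping in $\{|\zeta|^{p}\leq|\eta|^{q}\}$: although $D^{2}\cR$ is indefinite there, the exact cancellations forced by $q=p/(p-1)$ make the good $\zeta\zeta$-part of $\delta H^{(A,B)}_{\cR}$ large enough — uniformly in $\omega$ — both to absorb the indefinite pieces and, when paired with the $Y$-building-block, to manufacture the required cross term once $\delta$ is chosen sufficiently small.
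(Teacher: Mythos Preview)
The paper does not prove this theorem here; it is quoted verbatim from \cite[Theorem~5.2]{CD-DivForm} and used as a black box. Your argument is correct and matches the strategy that the paper itself advertises (``owing to the tensor structure of $\cQ$, the generalised convexity of $\cQ$ is related to that of its elementary building blocks\dots the power functions $F_r$''): split $\cQ=F_p(\zeta)+F_q(\eta)+\delta\cR$, control the first two summands via Lemma~\ref{l: N prop of Deltap}~(\ref{L4 (v)}), and handle the two branches of $\cR$ separately, with the key exponent identities $|\zeta|^{p-2}|\eta|^{q-2}\geq1$ on $\{|\zeta|^p\geq|\eta|^q\}$ and $|\zeta||\eta|^{1-q}\leq1$ on $\{|\zeta|^p\leq|\eta|^q\}$ --- both consequences of $q=p/(p-1)$ --- doing the work. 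The only point worth tightening is the final line: your choice of $\delta$ and the resulting constant are qualitatively right (of the form $c\,\Delta_p(A,B)\lambda/\Lambda$), but recovering the specific factor $1/5$ requires tracking the numerical constants $C$ coming from the $\zeta\eta$- and $\eta\eta$-blocks more carefully than ``keeping track of the constants'' suggests.
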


\section{Strategy for proving the bilinear embedding (Theorem~\ref{t: N bil})}
\label{s: Thm bil discussion} 
Let $p>1$ and $A,B\in\cA(\Omega)$ be such that $\Delta_{p}(A,B)>0$. To simplify the exposition, in this section we only consider pure Neumann boundary conditions $\oV=\oV^{\prime}=W^{1,2}(\Omega)$ for $\oL^{A}$ and $\oL^{B}$. Ignore for one moment that the Bellman function $\cQ$ defined in \eqref{eq: N Bellman} is not globally $C^{2}$ (this can be easily fixed by means of convolution with smooth approximation of identity; see Section~\ref{s: conv}). 

We would like to use the heat-flow method of Section~\ref{s: N heat-flow}
applied with $k=2$ and $\Phi=\cQ$ to deduce Theorem~\ref{t: N bil} from the strict $(A,B)$-convexity of $\cQ$ (see Theorem~\ref{t: N B gen conv}), Proposition~\ref{p: N Nittka} and the first size estimate in \eqref{eq: N 5}. This was our approach in \cite{CD-DivForm}. The major difficulty here is that it is not clear whether $(\partial_{\zeta}Q)(T^{A}_{t}f,T^{B}_{t}g)$ and $(\partial_{\eta}Q)(T^{A}_{t}f,T^{B}_{t}g)$ belong to the form domain $W^{1,2}(\Omega)$ whenever $f,g\in (L^{p}\cap L^{q})(\Omega)$, so the hypothesis of Section~\ref{s: N heat-flow} b) may not be satisfied and we cannot justify the integration by parts \eqref{eq: ibp} on the right-hand side of \eqref{eq: N flow derivative}. As we remarked in the introduction, in the special case when $\Omega=\R^{d}$ we overcame this difficulty by using a regularisation argument that we learnt from \cite[Section~1.2]{AuTc}; see \cite[Theorem~1.1, Section~4.2 and Appendix]{CD-DivForm}. Since we do not see how to modify the regularisation trick in the case of general open subsets $\Omega\subset\R^{d}$, for proving Theorem~\ref{t: N bil} we instead modify the Bellman-heat-flow argument used in \cite{CD-DivForm}; see Section~\ref{s: N proof bil}.
\smallskip

Our idea is to approximate the Nazarov-Treil Bellman function $\cQ$ with a sequence $(\cR_{n,\nu})_{n\in\N_{+}}$, $\nu>0$, of smooth $(A,B)$-convex functions having first order partial derivatives of linear growth and bounded second order derivatives (see Theorem~\ref{t: N approx}), in a such a way that, for $\Phi=\cR_{n,\nu}$, the integration by parts in \eqref{eq: N flow derivative} is justified. Then we deduce Theorem~\ref{t: N bil} by a limiting argument. The construction of the sequence $(\cR_{n,\nu})_{n\in\N_{+}}$, although based on elementary arguments, requires some effort, because of the rigidity of the $(A,B)$-convexity. 
\medskip

We now shortly describe the main steps in the construction of the sequence $(\cR_{n,\nu})_{n\in\N_{+}}$. The technical details are postponed to Section~\ref{s: the sequence}. Denote by $\star$ the convolution in $\R^4$. 
\begin{enumerate}[(a)]
\item Since we need to use the chain-rule for the composition with vector-valued Sobolev functions \cite{Leonichain, AmbMaschain}, it is convenient to replace $\cQ$ with its regularised version $\cQ\star\varphi_{\nu}$, where $\{\varphi_{\nu}\}_{\nu\in (0,1)}$ is a smooth compactly supported approximation of the identity in $\R^{4}$; see Section~\ref{s: conv}.
\item The size of the first- and second-order derivatives of $\cQ\star\varphi_{\nu}$ (see the estimates of Lemma~\ref{l: N second order}) does not justify the integration by parts as in Section~\ref{s: N heat-flow} with $\Phi=\cQ\star\varphi_{\nu}$. So we cut $\cQ\star\varphi_{\nu}$ by means of a sequence of smooth mollifiers $\{\psi_{n}\}_{n\in\N_{+}}$ such that $\psi_{n}\geq 0$ is supported, say, in $B_{\R^{4}}(0,4n)$ and $\psi_{n}=1$ in $B_{\R^{4}}(0,3n)$.
\item The function $\psi_{n}\cdot (\cQ\star\varphi_{\nu})$ is not $(A,B)$-convex in the region $\{\omega\in\R^{4}: 3n\leq |\omega|\leq 4n\}$. To fix this problem, we add another regular function $\cP_{n,\nu}$ which is globally $(A,B)$-convex and strictly $(A,B)$-convex in the annulus, in order that 
$$
\cR_{n,\nu}=\psi_{n}\cdot(\cQ\star\varphi_{\nu})+\cP_{n,\nu}
$$ 
becomes $(A,B)$-convex in all $\R^{4}$; see Section~\ref{s: conv}.
\end{enumerate}
In the construction of $\cP_{n,\nu}$ we need to bear in mind that: 
\begin{itemize}
\item For each $n$ the function $\cR_{n,\nu}$ must have  partial derivatives with linear growth and bounded second order derivatives (needed for the integration by parts as in Section~\ref{s: N heat-flow} b) with $\Phi=\cR_{n,\nu}$);
\item $D^{2}\cR_{n,\nu}$ must converge pointwise to $D^{2}(\cQ\star\varphi_{\nu})$ (needed for applying the strict $(A,B)$-convexity of $\cQ\star\varphi_{\nu}$; see Proposition~\ref{p: N gen conv reg}); 
\item $D\cR_{n,\nu}$ must converge pointwise to $D(\cQ\star\varphi_{\nu})$ and satisfy $$\mod{(D\cR_{n,\nu})(\omega)}\leqsim_{\nu} \left(|\omega|^{p-1}+|\omega|^{q-1}
\right),\quad \forall \omega\in\R^{4}$$
uniformly in $n\in\N_{+}$ (needed for applying Lebesgue convergence theorem and passing to the limit as $n$ goes to $\infty$ in the right-hand side of \eqref{eq: N flow derivative} with $\Phi=\cR_{n,\nu}$).
\end{itemize}
\medskip

We shall define 
\begin{equation}\label{eq: explPn}
\cP_{n,\nu}:=C(\nu)\cP_{n}\star\varphi_{\nu},
\end{equation}
for a suitable sequence $\{\cP_{n}\}_{n\in\N_{+}}$ and constant $C(\nu)>0$; see Sections~\ref{s: the sequence}~and~\ref{s: conv}. 
\medskip

Since in the ball $\{\omega\in\R^{4}: |\omega|\leq 5n\}$ we have the estimate
$$
\mod{(D^{2}(\psi_{n}\cdot (\cQ\star\varphi_{\nu}))(\omega)}\leqsim_{\nu}n^{p-2};
$$
(see \eqref{eq: N annulus}), a natural choice (see, for example, \cite{Yan}) for the sequence $\{\cP_{n}\}_{n\in\N_{+}}$ would be $\cP_{n}(\omega):=P_{n}(\omega):=p_{n}(|\omega|)$, where
$$
p_{n}(t):=C(\nu)\,n^{p-2}
\begin{cases}
0,&t\in [0,n];\\
(t-n)^{2},& t>n.
\end{cases}
$$
Unfortunately, Proposition~\ref{p: limit of g-conv} applied with $\Gamma(\zeta)=P_{n}(\zeta,0)$ or $\Gamma(\zeta)=P_{n}(0,\zeta)$ for $\zeta\in\R^{2}$, shows that if either $\Im A\neq 0$ or $\Im B\neq 0$, the function $P_{n}$ is {\bf not} $(A,B)$-convex in $\R^{4}\setminus\{|\omega|=n\}$.
\medskip

By Lemma~\ref{l: N prop of Deltap}~(\ref{L4 (v)}), under the assumption $\Delta_{p}(A,B)>0$, the sum of the $2$-variable power functions
$$
G_{p}(\zeta,\eta):=F_{p}(\zeta)+F_{p}(\eta),\quad \zeta,\eta\in\R^{2}
$$
is $(A,B)$-convex in $\R^{4}\setminus\{0\}$ and by Lemma~\ref{l: N prop of Deltap}~(\ref{L4 (iii)}) the range of $p$-ellipticity is open. So in \eqref{eq: explPn} one can try to take $\cP_{n}$ of the form
$$
\cP_{n}(\zeta,\eta):=f_{n}(|\zeta|)+f_{n}(|\eta|),\quad \zeta,\eta\in\R^{2},
$$
where
\begin{equation}\label{eq: deffn}
f_{n}(t):=\begin{cases}
n^{-\epsilon}t^{p+\epsilon},& 0\leq t\leq n,\\
\frac{p+\epsilon}{2}n^{p-2}t^{2}+(1-\frac{p+\epsilon}{2})n^{p},&t\geq n
\end{cases}
\end{equation}
and $\epsilon>0$ is such that $\Delta_{p+\epsilon}(A,B)>0$.
However, this is {\bf not} enough to compensate for the lack of $(A,B)$-convexity of $\psi_{n}\cdot (\cQ\star\varphi_{\nu})$ (see point (c)) in regions of the form $\{\omega\in\R^{4}:3n\leq|\omega|\leq 4n\}\cap S_{\kappa}$, $\kappa>1$, where
$$
S_{\kappa}:=\left\{ |\zeta|\leq\kappa^{-1}|\eta|\right\}\cup\left\{ |\eta|\leq\kappa^{-1}|\zeta|\right\}.
$$ 
In light of the previous considerations, one can try to define $\cP_{n}$ by means of suitable truncated $4$-variable power functions of the form
$$
\cP_{n}(\omega)=f_{n}(|\omega|),\quad \omega\in\R^{4},
$$
where $f_{n}$ is given by \eqref{eq: deffn}. It turns out that even this is {\bf not} the right sequence, since, in general, the condition $\Delta_{p}(A,B)>0$ does {\bf not} imply that the $4$-variable power function 
$$
F_{p}(\omega)=F_{p,\R^{4}}(\omega)=|\omega|^{p},\quad \omega\in\R^{4}
$$
is $(A,B)$-convex in $\R^{4}\setminus\{0\}$. 

\begin{example} Consider the case when $A=aI_{\C^{d}}$ and $B=bI_{\C^{d}}$, $a,b>0$. Then by Lemma~\ref{l: N prop of Deltap}~(\ref{L4 (vi)}) we have $\Delta_{p}(A,B)>0$ for all $p>1$. By Lemma~\ref{l: N gen F4 bis}, on the other hand, $F_{p}$ is $(A,B)$-convex in $\R^{4}\setminus\{0\}$ if and only if 
$|1-2/p|\leq 2\sqrt{ab}/(a+b)$. The same result also follows from \cite[Proof of Th\'eor\`eme~7 and p. 19]{Bakry3}.
\end{example}

We show in Proposition~\ref{p: N 1} that the $4$-variable power function $F_{p}$ is $(A,B)$-convex in a subregion $S_{\kappa}$ depending on $A$, $B$ and $p$.

We also show in Proposition~\ref{p: N modpw} that in the complementary region, $\R^{4}\setminus S_{k}$, a suitable multiple of the sum of the $2$-variable power functions $G_{p}$ compensates for the lack of $(A,B)$-convexity of $F_{p}$ in $\R^{4}\setminus S_{k}$.
\medskip

Finally, we end up with the right sequence $\{\cP_{n}\}_{n\in\N_{+}}$:
$$
\cP_{n}(\zeta,\eta):=f_{n}(|\omega|)+K\left(f_{n}(|\zeta|)+f_{n}(|\eta|)\right),\quad \zeta,\eta\in\R^{2},\quad \omega=(\zeta,\eta)
$$
for suitable $\epsilon>0$ and $K>0$ depending on $p$, $A$ and $B$, and $f_{n}$ as in \eqref{eq: deffn}.

\section{The sequence $\{\cP_{n}\}_{n\in\N_{+}}$}\label{s: the sequence}
The aim of this section is to provide all the details in the construction of the sequence $\{\cP_{n}\}_{n\in\N_{+}}$ roughly described in Section~\ref{s: Thm bil discussion} and prove some of its properties. For the reader's convenience we also recollect here some notation from Sections~\ref{s: Neumann introduction} and \ref{s: Neumann gen conv}.

\subsection{Power functions in higher dimensions}\label{s: powers} 
Let $p>1$ and $l\in\N_{+}$. Define $F_p:(\R^{2})^{l}\rightarrow\R_+$ by
$$
F_p(\xi)=|\omega|^p,\quad \omega\in(\R^{2})^{l}.
$$
We remark that while the power functions defined above are different for different values of the dimension $l$, we will use the {\bf same} symbol ``$F_p$'' to denote all of them. 
\smallskip

For $\omega_{1},\dots,\omega_{l}\in \R^2$, $X_{1},\dots,X_{l}\in\R^{d}\times\R^{d}$ and $A_{1},\dots, A_{l}\in \cA(\Omega)$, we set 

$$
\omega=(\omega_1,\hdots,\omega_m),\quad X=(X_1,\hdots,X_m),\quad 
\bA=(A_1,\hdots,A_m).
$$
We also define
$$
\Delta_r(\bA)=\min_{j=1,\hdots,l}\Delta_r(A_j)\quad {\rm and}\quad 
\Lambda(\bA)=\max_{j=1,\hdots,l}\Lambda(A_j).
$$
\begin{lemma}\label{l: N gen F4 bis}
Let $p>1$, $d,l\in\N_{+}$, $A_{1},\dots,A_{l}\in\cA(\Omega)$, $\omega\in(\R^{2})^{l}$ and $X\in(\R^{d}\times\R^{d})^{l}$. Then, for $\omega\neq 0$,
\begin{equation}
\label{eq: gen Fl 1}
H_{F_p}^{\bA}[\omega;X]=|\omega|^{p-2}H_{F_p}^{\bA}[\omega/|\omega|;X].
\end{equation}
In case when $|\omega|=1$ we have, for
$$
\sigma_{j}:=e^{-i\arg\omega_j}\cV_d^{-1}(X_j)\in\C^{d},
$$
the following formul\ae\ :
\begin{enumerate}[(\rm I)]
\item
\label{eq: gen Fl 2}
$$
p^{-1}H_{F_p}^{\bA}[\omega;X]  =\sum_{j=1}^{l}\Re\sk{A_{j}\sigma_{j}}{\sigma_{j}}
+(p-2)\sum_{j,k=1}^{l}|\omega_{j}||\omega_k|\Re\sk{A_{j}\sigma_{j}}{\Re\sigma_{k}}
$$
\item
\label{eq: gen Fl 3}
$$
\aligned
p^{-1}H_{F_p}^{\bA}[\omega;X]  &=\sum_{j=1}^{l}\big(1-|\omega_j|^2\big)\Re\sk{A_{j}\sigma_{j}}{\sigma_{j}}
+\frac{p}{2}\sum_{j=1}^{l}|\omega_j|^2\Re\sk{A_{j}\sigma_{j}}{\cI_p\sigma_{j}}\\
&\hskip 40pt
+(p-2)\sum_{j\not =k}|\omega_{j}||\omega_k|\Re\sk{A_{j}\sigma_{j}}{\Re\sigma_{k}}.
\endaligned
$$
\end{enumerate}
\end{lemma}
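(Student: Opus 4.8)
The plan is to reduce the statement to an explicit computation of the Euclidean Hessian of $F_p$ on $(\R^2)^l\setminus\{0\}$ and then transfer the result to the complex picture via \eqref{eq: real form} and the phase-rotation device already used for Lemma~\ref{l: N prop of Deltap}~(\ref{L4 (v)}); there is no deep point here, only careful bookkeeping of the several tensor slots. The scaling identity \eqref{eq: gen Fl 1} comes for free: $F_p$ is of class $C^{\infty}$ on $(\R^2)^l\setminus\{0\}$ and positively homogeneous of degree $p$, so $(D^2F_p)(\omega)$ is positively homogeneous of degree $p-2$, whence $(D^2F_p)(\omega)=|\omega|^{p-2}(D^2F_p)(\omega/|\omega|)$. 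Since left multiplication by $\cM(A_1^*)\oplus\cdots\oplus\cM(A_l^*)$ and tensoring with $I_{\R^d}$ are linear, it follows that $H_{F_p}^{\bA}(\omega)=|\omega|^{p-2}H_{F_p}^{\bA}(\omega/|\omega|)$, and the same scaling holds for the associated quadratic form. From now on I take $|\omega|=1$.

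Next I would compute $(D^2F_p)(\omega)$ explicitly. From $F_p(\omega)=\bigl(\sum_k|\omega_k|^2\bigr)^{p/2}$ one gets $\partial_{\omega_j}F_p=p|\omega|^{p-2}\omega_j$, and hence the $(j,k)$ block of the Hessian is
\[
(D^2F_p)(\omega)_{jk}=p(p-2)|\omega|^{p-4}\,\omega_j\omega_k^{T}+p|\omega|^{p-2}\delta_{jk}I_{\R^2},
\]
which at $|\omega|=1$ equals $p(p-2)\,\omega_j\omega_k^{T}+p\,\delta_{jk}I_{\R^2}$. Recalling that $(D^2F_p)(\omega)\otimes I_{\R^d}$ is the $2ld\times 2ld$ matrix whose $(j,k)$ super-block is $(D^2F_p)(\omega)_{jk}\otimes I_{\R^d}$ (with the $\R^2$ structure outer and $I_{\R^d}$ inner, matching the layout of each $\cM(A_j^*)$), the generalised Hessian quadratic form splits into a diagonal part coming from $p\,\delta_{jk}I_{\R^2}\otimes I_{\R^d}$ and a rank-one part coming from $p(p-2)\,\omega_j\omega_k^{T}\otimes I_{\R^d}$:
\[
H_{F_p}^{\bA}[\omega;X]=p\sum_j\sk{\cM(A_j^*)X_j}{X_j}_{\R^{2d}}+p(p-2)\sum_{j,k}\sk{\cM(A_j^*)\bigl((\omega_j\omega_k^{T})\otimes I_{\R^d}\bigr)X_k}{X_j}_{\R^{2d}}.
\]

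The one delicate step is the translation of the two sums into complex notation. For the diagonal part, $\cM(A^*)=\cM(A)^{T}$ together with \eqref{eq: real form} gives $\sk{\cM(A_j^*)X_j}{X_j}_{\R^{2d}}=\Re\sk{A_j\cV_d^{-1}(X_j)}{\cV_d^{-1}(X_j)}_{\C^d}$, and since $\cV_d^{-1}(X_j)=e^{i\arg\omega_j}\sigma_j$ the unimodular phase cancels, leaving $\Re\sk{A_j\sigma_j}{\sigma_j}$. For the rank-one part, a short computation shows that $(\omega_j\omega_k^{T}\otimes I_{\R^d})X_k$ corresponds, under $\cV_d^{-1}$, to the vector $|\omega_j|\,|\omega_k|\,e^{i\arg\omega_j}\Re\sigma_k$ (this is precisely where $\Re\sigma_k=\Re\bigl(e^{-i\arg\omega_k}\cV_d^{-1}(X_k)\bigr)$ enters), so by \eqref{eq: real form} again, after using $\Re\sk{A_j^*v}{w}=\Re\sk{A_jw}{v}$ to dispose of the adjoint, the two phases cancel and the term becomes $|\omega_j|\,|\omega_k|\,\Re\sk{A_j\sigma_j}{\Re\sigma_k}$. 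Summing over $j,k$ and dividing by $p$ yields formula (I); when $\omega_j=0$ the relevant rank-one terms vanish, so $\sigma_j$ may be assigned any value.

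Finally, formula (II) follows from (I) by algebra alone: isolate the $j=k$ part of the double sum, use $\overline{\sigma}=2\Re\sigma-\sigma$ to rewrite $\cI_p\sigma=\tfrac2p\sigma+2\bigl(1-\tfrac2p\bigr)\Re\sigma$ and hence $\Re\sk{A_j\sigma_j}{\sigma_j}+(p-2)\Re\sk{A_j\sigma_j}{\Re\sigma_j}=\tfrac p2\Re\sk{A_j\sigma_j}{\cI_p\sigma_j}$, then split $\Re\sk{A_j\sigma_j}{\sigma_j}=(1-|\omega_j|^2)\Re\sk{A_j\sigma_j}{\sigma_j}+|\omega_j|^2\Re\sk{A_j\sigma_j}{\sigma_j}$ and absorb the $|\omega_j|^2$ terms into the $j=k$ ones via the preceding identity; the outcome is exactly formula (II). The main obstacle throughout is the bookkeeping (keeping the block index $j$ separate from the internal $\R^2\otimes\R^d$ tensor structure, and tracking phases and adjoints through the sesquilinear $\sk{\cdot}{\cdot}_{\C^d}$), but once those conventions are fixed every step is routine. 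As a sanity check, for $l=1$ and $|\omega|=1$ formula (II) collapses to $\tfrac{p^2}{2}\Re\sk{A_1\sigma_1}{\cI_p\sigma_1}$, in agreement with Lemma~\ref{l: N prop of Deltap}~(\ref{L4 (v)}).
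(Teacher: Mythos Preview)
Your argument is correct and follows essentially the same route as the paper: compute the explicit Hessian of $F_p$, split into the identity part and the rank-one part $\omega\otimes\omega$, and translate each into complex form via \eqref{eq: real form} and the phase rotation $\sigma_j=e^{-i\arg\omega_j}\cV_d^{-1}(X_j)$. The only organisational difference is that the paper first assembles formula~(II) (grouping the diagonal blocks so as to invoke Lemma~\ref{l: N prop of Deltap}~(\ref{L4 (v)}) directly, and handling the off-diagonal blocks through an explicit rotation-matrix decomposition of $\tfrac{\omega_j}{|\omega_j|}\otimes\tfrac{\omega_k}{|\omega_k|}$) and then deduces~(I) via the identity $\tfrac p2\cI_p\sigma-\sigma=(p-2)\Re\sigma$, whereas you obtain~(I) first and then pass to~(II) by the inverse algebraic manipulation; your direct computation of $(\omega_j\omega_k^{T}\otimes I_{\R^d})X_k$ is in fact a bit cleaner than the paper's rotation-matrix argument.
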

\begin{proof}
A rapid calculation shows that
\begin{equation}\label{eq: formula Hessian}
(D^{2}F_{p})(\omega)=p|\omega|^{p-2}\left(I_{\R^{2l}}+(p-2)\frac{\omega}{|\omega|}\otimes\frac{\omega}{|\omega|}\right),\quad \omega\in(\R^{2})^{l}\setminus\{0\}
\end{equation}
and \eqref{eq: gen Fl 1} trivially follows from definitions. 

Now assume that $|\omega|=1$. From \eqref{eq: formula Hessian} 
we get 
$$
\aligned
p^{-1}H^{\bA}_{F_{p}}[\omega;X]
&=\sum_{j=1}^{m}
\sk{\left[\left(I_{\R^{2}}+|\omega_{j}|^{2}(p-2)\frac{\omega_{j}}{|\omega_{j}|}\otimes\frac{\omega_{j}}{|\omega_{j}|}\right)\otimes I_{\R^{d}}\right]X_j}{\cM(A_{j})X_{j}}\\
&\hskip 20pt+\sum_{j\not=k}|\omega_{j}||\omega_{k}|(p-2)\sk{\left[\left(\frac{\omega_{j}}{|\omega_{j}|}\otimes\frac{\omega_{k}}{|\omega_{k}|}\right)\otimes I_{\R^{d}}\right]X_k}{\cM(A_j)X_j}\\
&=:\Sigma_1+\Sigma_2.
\endaligned
$$
In order to calculate the $\Sigma_1$, write the summands as
$$
(1-|\omega_j|^2)\sk{\cM(A_{j})X_{j}}{X_{j}}
+|\omega_j|^2
\sk{\left[\left(I_{\R^{2}}+(p-2)\frac{\omega_{j}}{|\omega_{j}|}\otimes\frac{\omega_{j}}{|\omega_{j}|}\right)\otimes I_{\R^{d}}\right]X_j}{\cM(A_{j})X_{j}}.
$$
By applying \eqref{eq: real form} on the first term above and \eqref{eq: formula Hessian} and Lemma~\ref{l: N prop of Deltap}~(\ref{L4 (v)}) on the second, we get
\begin{equation}
\label{eq: S1}
\Sigma_1=\sum_{j=1}^{m}
\left(
(1-|\omega_j|^2)\Re\sk{A_{j}\sigma_{j}}{\sigma_{j}}
+\frac p2|\omega_j|^2
\Re\sk{A_{j}\sigma_{j}}{\cI_p\sigma_j}
\right).
\end{equation}
In order to calculate $\Sigma_2$, first write $\theta_{j}:=\arg\omega_{j}$, $j=1,\dots,l$. A calculation shows that
$$
\aligned
2\frac{\omega_{j}}{|\omega_{j}|}\otimes\frac{\omega_{k}}{|\omega_{k}|}&=\left(\begin{array}{rr}2\cos\theta_{j}\cos\theta_{k} & 2\cos\theta_{j}\sin\theta_{k} \\2\sin\theta_{j} \cos\theta_{k}& 2\sin\theta_{j}\sin\theta_{k}\end{array}\right)\\
&=\left(\begin{array}{rr}\cos(\theta_{j}-\theta_{k}) & -\sin(\theta_{j}-\theta_{k}) \\\sin(\theta_{j}-\theta_{k})& \cos(\theta_{j}-\theta_{k})\end{array}\right)\\
&\hskip 30pt
+\left(\begin{array}{rr}\cos(\theta_{j}+\theta_{k}) & -\sin(\theta_{j}+\theta_{k}) \\\sin(\theta_{j}+\theta_{k})& \cos(\theta_{j}+\theta_{k})\end{array}\right)\cdot \left(\begin{array}{rr}1 & 0 \\0 & -1\end{array}\right).
\endaligned
$$
Therefore,
$$
2\left(\frac{\omega_{j}}{|\omega_{j}|}\otimes\frac{\omega_{k}}{|\omega_{k}|}\right)\otimes I_{\R^{d}}=\cM\left(e^{i(\theta_{j}-\theta_{k})}I_{\C^{d}}\right)+ \cM\left(e^{i(\theta_{j}+\theta_{k})}I_{\C^{d}}\right)\left(\left(\begin{array}{cc}1 & 0 \\0 & -1\end{array}\right)\otimes I_{\R^{d}}\right).
$$
Observe that the last factor in the second term on the right-hand side is the real form of the complex conjugation in $\C^{d}$. Consequently, from the identity
\begin{equation}
\label{eq: X_s}
X_{k}=\cV_{d}\left(e^{i\theta_{k}}\sigma_{k}\right)
\end{equation}
we get
$$
\aligned
2\left[\left(\frac{\omega_{j}}{|\omega_{j}|}\otimes\frac{\omega_{k}}{|\omega_{k}|}\right)\otimes I_{\R^{d}}\right]X_{k}
&=\cV_{d}\left(e^{i(\theta_{j}-\theta_{k})}\cdot e^{i\theta_{k}}\sigma_{k}\right)
+\cV_{d}\left(e^{i(\theta_{j}+\theta_{k})}\cdot e^{-i\theta_{k}}\overline{\sigma_{k}}\right)\\
&=2\cV_{d}\left(e^{i\theta_j}\Re\sigma_{k}\right).
\endaligned
$$
By using \eqref{eq: X_s} with $k=j$ and \eqref{eq: real form} we conclude that
$$
\sk{\left[\left(\frac{\omega_{j}}{|\omega_{j}|}\otimes\frac{\omega_{k}}{|\omega_{k}|}\right)\otimes I_{\R^{d}}\right]X_k}{\cM(A_j)X_j}
=\Re\sk{\Re\sigma_{k}}{A_{j}\sigma_{j}}.
$$
Hence
\begin{equation}
\label{eq: S2}
\Sigma_2=(p-2)\sum_{j\not=k}|\omega_{j}||\omega_{k}|\Re\sk{\Re\sigma_{k}}{A_{j}\sigma_{j}}.
\end{equation}
The identity \eqref{eq: gen Fl 3} now follows by combining \eqref{eq: S1} with \eqref{eq: S2}.
\medskip

In order to prove \eqref{eq: gen Fl 2}, we write the diagonal terms in \eqref{eq: gen Fl 3} as
$$
\aligned
\frac p2|\omega_{j}|^{2} & \Re\sk{A_{j}\sigma_{j}}{\cI_{p}\sigma_j}+
\big(1-|\omega_{j}|^2\big)\Re\sk{A_{j}\sigma_{j}}{\sigma_{j}}\\
&=
\Re\sk{A_{j}\sigma_{j}}{\sigma_j}+|\omega_j|^{2}\Re\sk{A_{j}\sigma_{j}}{\frac p2\cI_p\sigma_{j}-\sigma_{j}},
\endaligned
$$
and use the identity
\begin{alignat*}{2}
\frac{p}{2}\cI_{p}\sigma_{j}-\sigma_{j}=(p-2)\Re\sigma_{j}.
\tag*{\qedhere}
\end{alignat*}
\end{proof}
We note that in the special case $l=1$, Lemma~\ref{l: N gen F4 bis} is consistent with Lemma~\ref{l: N prop of Deltap} \eqref{L4 (v)}.
\begin{corollary}
\label{c: Fl}
Let $p\geq 2$, $d,l\in\N_{+}$ and $A_{1},\dots,A_{l}\in\cA(\Omega)$. Then for every $\omega\in(\R^{2})^{l}$ with $|\omega|=1$ and for every $X\in(\R^{d}\times\R^{d})^{l}$ we have 
$$
p^{-1}H_{F_p}^{\bA}[\omega;X]  \geq
|X|^2\left(
\Delta_p(\bA)
-(p-2)\Lambda(\bA)\sum_{j<k}|\omega_{j}||\omega_k|
\right).
$$
\end{corollary}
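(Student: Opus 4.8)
The plan is to read the inequality off directly from formula~\eqref{eq: gen Fl 3} of Lemma~\ref{l: N gen F4 bis}, estimating separately the ``diagonal'' and the ``off-diagonal'' terms on its right-hand side; everything is elementary once that identity is in hand, so I do not expect a genuine obstacle, only some bookkeeping. Two preliminary remarks. First, since $\cV_{d}$ is an $\R$-linear isometry from $\C^{d}$ onto $\R^{d}\times\R^{d}$ and multiplication by the unimodular scalar $e^{-i\arg\omega_{j}}$ is an isometry of $\C^{d}$, the vectors $\sigma_{j}=e^{-i\arg\omega_{j}}\cV_{d}^{-1}(X_{j})$ satisfy $|\sigma_{j}|=|X_{j}|$, so that $\sum_{j}|\sigma_{j}|^{2}=|X|^{2}$. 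Second, $|\omega|=1$ forces $0\leq|\omega_{j}|\leq1$ for each $j$, while $p\geq2$ gives $p-2\geq0$ and $\tfrac p2\geq1$.

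For the diagonal part, the $j$-th diagonal summand in \eqref{eq: gen Fl 3} equals $(1-|\omega_{j}|^{2})\Re\sk{A_{j}\sigma_{j}}{\sigma_{j}}+\tfrac p2|\omega_{j}|^{2}\Re\sk{A_{j}\sigma_{j}}{\cI_{p}\sigma_{j}}$. By the definition \eqref{eq: N Deltap} of $\Delta_{r}(A_{j})$ evaluated at $r=2$ (where $\cI_{2}$ is the identity) and at $r=p$, together with the fact that $r\mapsto\Delta_{r}(A_{j})$ is nonincreasing on $[2,\infty)$ (Lemma~\ref{l: N prop of Deltap}~(\ref{L4 (iii)})), both real parts above are $\geq\Delta_{p}(A_{j})|\sigma_{j}|^{2}\geq\Delta_{p}(\bA)|\sigma_{j}|^{2}$. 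Since the nonnegative coefficients $1-|\omega_{j}|^{2}$ and $\tfrac p2|\omega_{j}|^{2}$ sum to $1+(\tfrac p2-1)|\omega_{j}|^{2}\geq1$, the $j$-th diagonal summand is $\geq\Delta_{p}(\bA)|\sigma_{j}|^{2}$ (this is the one place where $\Delta_{p}(\bA)\geq0$ is used); summing over $j$ shows that the diagonal part of $p^{-1}H_{F_{p}}^{\bA}[\omega;X]$ is $\geq\Delta_{p}(\bA)|X|^{2}$.

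For the off-diagonal part $(p-2)\sum_{j\neq k}|\omega_{j}||\omega_{k}|\Re\sk{A_{j}\sigma_{j}}{\Re\sigma_{k}}$, I would use $p-2\geq0$ together with the boundedness estimate \eqref{eq: Neumann bounded}, the bound $|\Re\sigma_{k}|\leq|\sigma_{k}|$ and $\Lambda(A_{j})\leq\Lambda(\bA)$ to bound each summand from below by $-(p-2)\Lambda(\bA)|\omega_{j}||\omega_{k}||\sigma_{j}||\sigma_{k}|$. For $j\neq k$ one has $|\sigma_{j}||\sigma_{k}|\leq\tfrac12(|\sigma_{j}|^{2}+|\sigma_{k}|^{2})\leq\tfrac12|X|^{2}$, the last step because $|\sigma_{j}|^{2}$ and $|\sigma_{k}|^{2}$ are distinct summands of $\sum_{m}|\sigma_{m}|^{2}=|X|^{2}$; combining this with $\sum_{j\neq k}|\omega_{j}||\omega_{k}|=2\sum_{j<k}|\omega_{j}||\omega_{k}|$ gives that the off-diagonal part is $\geq-(p-2)\Lambda(\bA)|X|^{2}\sum_{j<k}|\omega_{j}||\omega_{k}|$. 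Adding the two estimates yields the corollary. The one point needing a little care is extracting exactly the combinatorial factor $\sum_{j<k}|\omega_{j}||\omega_{k}|$ (rather than a cruder majorant) in the off-diagonal bound, which is precisely what the pairing $|\sigma_{j}|^{2}+|\sigma_{k}|^{2}\leq|X|^{2}$ for $j\neq k$ accomplishes; the rest is a direct substitution into Lemma~\ref{l: N gen F4 bis}.
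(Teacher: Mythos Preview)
Your proof is correct and follows essentially the same route as the paper's: both start from formula~\eqref{eq: gen Fl 3}, bound the diagonal terms below using $\Delta_2(A_j)\geq\Delta_p(A_j)\geq\Delta_p(\bA)$, bound the off-diagonal terms via \eqref{eq: Neumann bounded}, and finish with $2|\sigma_j||\sigma_k|\leq|\sigma_j|^2+|\sigma_k|^2\leq|X|^2$ for $j\neq k$. Your remark that the diagonal estimate tacitly needs $\Delta_p(\bA)\geq0$ is apt and applies equally to the paper's own argument; the corollary is only invoked later (Proposition~\ref{p: N 1}) under the standing hypothesis $\Delta_p(A,B)>0$, so this causes no trouble.
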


\begin{proof}
For $j=1,\dots,l$ set
$$
\sigma_{j}:=e^{-i\arg\omega_j}\cV_d^{-1}(X_j)\in\C^{d}.
$$
From \eqref{eq: gen Fl 3} we get
$$
\aligned
p^{-1}H_{F_p}^{\bA}[\omega;X]  &\geq\sum_{j=1}^{l}\big(1-|\omega_j|^2\big)
|\sigma_j|^2\Delta_2(A_j)
+\frac{p}{2}\sum_{j=1}^{l}|\omega_j|^2|\sigma_j|^2\Delta_p(A_j)\\
&\hskip 40pt
-|p-2|\sum_{j\not =k}|\omega_{j}||\omega_k|
\Lambda(A_j)|\sigma_{j}||\sigma_{k}|.
\endaligned
$$
Since $\Delta_2(B)\geq\Delta_p(B)$ for every $p>1$ and every matrix $B$, we may continue as
$$
\aligned
p^{-1}H_{F_p}^{\bA}[\omega;X]  &\geq\Delta_p(\bA)\sum_{j=1}^{l}\big(1-|\omega_j|^2\big)
|\sigma_j|^2
+\frac{p}{2}\,\Delta_p(\bA)\sum_{j=1}^{l}|\omega_j|^2|\sigma_j|^2\\
&\hskip 40pt
-2|p-2|\Lambda(\bA)\sum_{j<k}|\omega_{j}||\omega_k|
|\sigma_{j}||\sigma_{k}|\\
&\geq\Delta_p(\bA)\min\left\{\frac p2,1\right\}|X|^2
-2|p-2|\Lambda(\bA)\sum_{j<k}|\omega_{j}||\omega_k|
|X_{j}||X_{k}|.
\endaligned
$$
Since $p\geq 2$, the corollary is proved.
\end{proof}
\subsection{Generalised convexity of the $4$-variable power function} For each $\kappa>0$ we consider the subregion of $\R^{4}$ given by
$$
S_{\kappa}:=\left\{(\zeta,\eta)\in\R^{2}\times\R^{2}: |\zeta|\leq\kappa^{-1}|\eta|\right\}\cup\left\{(\zeta,\eta)\in\R^{2}\times\R^{2}: |\eta|\leq\kappa^{-1}|\zeta|\right\}.
$$ 
Note that for $\kappa\in (0,1]$ we have $S_{\kappa}=\R^{4}$. Also,
when $\kappa>1$ and $\omega\in\R^{4}\setminus S_{\kappa}$, we have
\begin{equation}
\label{eq: N 1}
 \frac{1}{\kappa\sqrt{2}}|\omega|\leq |\zeta|\leq |\omega|\quad {\rm and}\quad \frac{1}{\kappa\sqrt{2}}|\omega|\leq |\eta|\leq |\omega|.
\end{equation}
If $p\geq 2$ and $A,B\in\cA(\Omega)$ are $p$-elliptic, then we define the constant
\begin{equation}\label{eq: def k_{p}}
\kappa_{p}
:=(p-2)\,\frac{\Lambda(A,B)}{\Delta_p(A,B)}.
\end{equation}
\begin{proposition}
\label{p: N 1}
Let $p>2$. Suppose that $A,B\in\cA(\Omega)$ satisfy $\Delta_{p}(A,B)>0$. Let $\kappa\geq \kappa_{p}$. Then $F_{p}$ is $(A,B)$-convex in the region $S_{\kappa}$, that is,
$$
H^{(A,B)}_{F_{p}}[\omega;(X,Y)]\geq 0,
$$
for all $\omega\in S_{\kappa}$ and for all $X,Y\in\R^{d}\times\R^{d}$.
\end{proposition}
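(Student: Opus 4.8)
The plan is to reduce the statement, by homogeneity, to the unit sphere $|\omega|=1$, and then read off nonnegativity at once from Corollary~\ref{c: Fl}. First I would observe that $S_\kappa$ is a cone, since the two defining inequalities $|\zeta|\leq\kappa^{-1}|\eta|$ and $|\eta|\leq\kappa^{-1}|\zeta|$ are positively $1$-homogeneous. The case $\omega=0$ is immediate: as $p>2$, formula \eqref{eq: formula Hessian} gives $(D^{2}F_{p})(0)=0$, hence $H^{(A,B)}_{F_{p}}[0;(X,Y)]=0$. For $\omega\in S_\kappa\setminus\{0\}$, the scaling identity \eqref{eq: gen Fl 1} reads
$$
H^{(A,B)}_{F_{p}}[\omega;(X,Y)]=|\omega|^{p-2}\,H^{(A,B)}_{F_{p}}[\omega/|\omega|;(X,Y)],
$$
and since $\omega/|\omega|\in S_\kappa$ it therefore suffices to prove the desired inequality under the additional normalisation $|\omega|=1$.

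With $l=2$ and $\bA=(A,B)$, so that $\Delta_p(\bA)=\Delta_p(A,B)$, $\Lambda(\bA)=\Lambda(A,B)$ and $\sum_{j<k}|\omega_{j}||\omega_k|=|\zeta||\eta|$, Corollary~\ref{c: Fl} gives, for $\omega=(\zeta,\eta)$ with $|\omega|=1$,
$$
p^{-1}H^{(A,B)}_{F_{p}}[\omega;(X,Y)]\geq \bigl(|X|^{2}+|Y|^{2}\bigr)\bigl(\Delta_p(A,B)-(p-2)\Lambda(A,B)\,|\zeta||\eta|\bigr).
$$
Since $p>0$, the whole matter reduces to showing that the second factor is nonnegative on $S_\kappa$. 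If $\omega\in S_\kappa$ with $\kappa\geq\kappa_{p}$, then by symmetry I may assume $|\zeta|\leq\kappa^{-1}|\eta|$; using $|\eta|\leq|\omega|=1$ and $\kappa\geq\kappa_{p}$,
$$
|\zeta||\eta|\leq\kappa^{-1}|\eta|^{2}\leq\kappa^{-1}\leq\kappa_{p}^{-1},
$$
and the defining relation \eqref{eq: def k_{p}} yields $(p-2)\Lambda(A,B)\,\kappa_{p}^{-1}=\Delta_p(A,B)$. Hence $\Delta_p(A,B)-(p-2)\Lambda(A,B)\,|\zeta||\eta|\geq 0$, so $H^{(A,B)}_{F_{p}}[\omega;(X,Y)]\geq 0$, which is the claim.

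I do not expect a genuine obstacle: the proposition is a quantitative repackaging of Corollary~\ref{c: Fl}, and the only points requiring a little care are the homogeneous reduction to $|\omega|=1$ together with the separate (and harmless, because $p>2$) treatment of $\omega=0$, and the bookkeeping that membership in $S_\kappa$ forces the cross term $|\zeta||\eta|$ to be at most $\kappa^{-1}$ — precisely the threshold $\kappa_{p}^{-1}=\Delta_p(A,B)/\bigl((p-2)\Lambda(A,B)\bigr)$ below which the lower bound in Corollary~\ref{c: Fl} remains $\geq 0$. Note that the inequalities \eqref{eq: N 1} are not needed for this argument; they will be used later, when $F_p$ on $S_\kappa$ is combined with the other pieces of $\cP_n$.
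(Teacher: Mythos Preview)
Your proof is correct and follows essentially the same approach as the paper's own proof, which simply cites \eqref{eq: gen Fl 1}, Corollary~\ref{c: Fl} with $l=2$, and the definition of $\kappa_p$. You have merely unpacked these citations in detail, including the harmless case $\omega=0$ and the verification that $|\zeta||\eta|\leq\kappa^{-1}$ on $S_\kappa\cap\{|\omega|=1\}$.
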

\begin{proof}
The proposition follows from \eqref{eq: gen Fl 1} and Corollary~\ref{c: Fl}, applied with $l=2$, and the definition of $\kappa_{p}$.
\end{proof}
\subsection{Modified $4$-variable power function}\label{s: mod powers}
We now 
perturb the $4$-variable power function
$F_{p}$ in order to get a function $(A,B)$-convex in {\it all} of $\R^{4}$. Let $p>2$ and $A,B\in\cA(\Omega)$ such that $\Delta_{p}(A,B)>0$. Define $\kappa_{p}$ by \eqref{eq: def k_{p}} and set
\begin{equation}\label{eq: def K_{p}}
K_p:=
\begin{cases}
0; & \kappa_p\leq1\\
(2\kappa_p)^{p-1};& \kappa_p>1\,.
\end{cases}
\end{equation}
Consider the function
\begin{equation}\label{eq: Pp}
P_{p}(\zeta,\eta)
:=F_{p}(\zeta,\eta)+K_p\left(F_p(\zeta)+F_p(\eta)\right),\quad \zeta,\eta\in\R^{2}.
\end{equation}
\begin{proposition}
\label{p: N modpw}
Suppose that $\Delta_{p}(A,B)>0$. Then $P_{p}$
is $(A,B)$-convex in $\R^{4}$.
\end{proposition}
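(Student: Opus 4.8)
The plan is to decompose $\R^4$ into the region $S_{\kappa_p}$, where Proposition~\ref{p: N 1} already gives $(A,B)$-convexity of $F_p$ (hence of $P_p$, since the added term $K_p(F_p(\zeta)+F_p(\eta))$ is itself $(A,B)$-convex away from the coordinate subspaces by Lemma~\ref{l: N prop of Deltap}~(\ref{L4 (v)}) and $\Delta_p(A,B)>0$), and the complementary region $\R^4\setminus S_{\kappa_p}$, where we must show that the multiple $K_p\bigl(F_p(\zeta)+F_p(\eta)\bigr)$ of the sum of two-variable power functions compensates the possible failure of $(A,B)$-convexity of the four-variable $F_p$. Since the generalised Hessian is additive in $\Phi$ and positively homogeneous in $\omega$ by \eqref{eq: gen Fl 1}, it suffices to prove the inequality $H^{(A,B)}_{P_p}[\omega;(X,Y)]\geq 0$ for $|\omega|=1$, $\omega\in\R^4\setminus S_{\kappa_p}$, and arbitrary $X,Y\in\R^d\times\R^d$.

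The key step is a lower bound for $H^{(A,B)}_{F_p}[\omega;(X,Y)]$ on the bad region that is controlled by the cross term $|\zeta||\eta|\,|X||Y|$, balanced against the diagonal contributions coming from the two-variable pieces. Concretely, applying Corollary~\ref{c: Fl} with $l=2$, $A_1=A$, $A_2=B$, $\omega_1=\zeta$, $\omega_2=\eta$ gives, for $|\omega|=1$,
$$
p^{-1}H^{(A,B)}_{F_p}[\omega;(X,Y)]\geq |(X,Y)|^2\bigl(\Delta_p(A,B)-(p-2)\Lambda(A,B)|\zeta||\eta|\bigr),
$$
so the only loss occurs through the summand $-(p-2)\Lambda(A,B)|\zeta||\eta|\,|(X,Y)|^2$. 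On the other hand, by Lemma~\ref{l: N prop of Deltap}~(\ref{L4 (v)}) applied to $F_p$ on $\R^2$ (with the matrices $A$ and $B$ respectively), the two-variable term contributes
$$
H^{(A,B)}_{F_p(\zeta)+F_p(\eta)}[\omega;(X,Y)]\geq \tfrac{p^2}{2}\Delta_p(A,B)\bigl(|\zeta|^{p-2}|X|^2+|\eta|^{p-2}|Y|^2\bigr),
$$
where $|\zeta|,|\eta|$ are bounded below on $\R^4\setminus S_{\kappa_p}$ by $(\kappa_p\sqrt2)^{-1}$ via \eqref{eq: N 1}. Adding $K_p$ times this to the bound for $F_p$ and using $|(X,Y)|^2=|X|^2+|Y|^2$ together with $|\zeta||\eta|\leq 1$, the choice $K_p=(2\kappa_p)^{p-1}$ is designed exactly so that $K_p(\kappa_p\sqrt2)^{-(p-2)}\cdot\tfrac p2\geq (p-2)\Lambda(A,B)/\Delta_p(A,B)=\kappa_p$, which makes the diagonal gain dominate the off-diagonal loss and yields $H^{(A,B)}_{P_p}[\omega;(X,Y)]\geq 0$. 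One then removes the exceptional sets $\{\zeta=0\}\cup\{\eta=0\}$ (where $F_p(\zeta)+F_p(\eta)$ fails to be $C^2$) by noting that $P_p\in C^1(\R^4)$ and $(A,B)$-convexity extends across a set of measure zero by continuity of the generalised Hessian wherever it exists, or simply by a direct limiting argument from nearby points.

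The main obstacle I expect is the bookkeeping in the region $\R^4\setminus S_{\kappa_p}$: one has to verify that the constant $K_p$ in \eqref{eq: def K_{p}} is large enough simultaneously for both the $A$- and $B$-diagonal contributions and for every admissible $(X,Y)$, i.e. that the worst case $|\zeta|=|\eta|$ (which is where $\R^4\setminus S_{\kappa_p}$ "pinches" the cross term) is still controlled. Tracking the exact powers of $\kappa_p$ — the factor $|\zeta|^{p-2}\geq(\kappa_p\sqrt2)^{-(p-2)}$ from \eqref{eq: N 1} against the prefactor $(2\kappa_p)^{p-1}$ in $K_p$ — is the delicate point; everything else is a routine combination of Corollary~\ref{c: Fl} and Lemma~\ref{l: N prop of Deltap}~(\ref{L4 (v)}).
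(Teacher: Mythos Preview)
Your proposal is correct and follows essentially the same strategy as the paper: split into $S_{\kappa_p}$ (handled by Proposition~\ref{p: N 1}) and its complement, and on the latter balance the cross-term loss from the four-variable $F_p$ against the diagonal gain from $K_p(F_p(\zeta)+F_p(\eta))$ using Lemma~\ref{l: N prop of Deltap}~(\ref{L4 (v)}) and the lower bounds \eqref{eq: N 1}. The only cosmetic difference is that the paper bounds the loss directly from Lemma~\ref{l: N gen F4 bis}~(\ref{eq: gen Fl 3}) as $-2p(p-2)\Lambda(A,B)|\omega|^{p-2}|X||Y|$ and then uses AM--GM on the gain to match, whereas you keep both loss and gain in the form $|X|^2+|Y|^2$ via Corollary~\ref{c: Fl}; either way the constant $K_p=(2\kappa_p)^{p-1}$ suffices. (Your worry about the exceptional set $\{\zeta=0\}\cup\{\eta=0\}$ is unnecessary: since $p>2$ here, the two-variable $F_p$ is $C^2$ on all of $\R^2$ with vanishing Hessian at the origin.)
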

\begin{proof}
If $\kappa_{p}\leq 1$, then
$S_{\kappa_{p}}=\R^{4}$ and $P_{p}=F_{p}$ on $\R^{4}$. Hence the proposition in this case follows from Proposition~\ref{p: N 1}. 
\medskip

Suppose now that $\kappa_{p}>1$. Let $\omega=(\zeta,\eta)\in\R^{2}\times\R^{2}$ and $X,Y\in\R^{d}\times\R^{d}$. We have
\begin{equation}
\label{eq: latif}
H_{P_{p}}^{(A,B)}[\omega;(X,Y)]
=
H_{F_{p}}^{(A,B)}[\omega;(X,Y)]
+(2\kappa_p)^{p-1}\left(H_{F_p}^A[\zeta;X]+H_{F_p}^B[\eta;Y]\right).
\end{equation}
Since $\Delta_{p}(A,B)>0$, Proposition~\ref{p: N 1} and Lemma~\ref{l: N prop of Deltap}~(\ref{L4 (v)}) imply that $P_{p}$
is $(A,B)$-convex in the region $S_{\kappa_p}$. 

If $\omega\in\R^4\backslash{S_{\kappa_p}}$, we separately estimate the two terms in the right-hand side of \eqref{eq: latif}. 
Since $\Delta_p(A,B)>0$, Lemma \ref{l: N gen F4 bis} and Lemma~\ref{l: N prop of Deltap}~(\ref{L4 (v)}) give
\begin{equation}
\label{eq: ineq1}
H_{F_{p}}^{(A,B)}[\omega;(X,Y)]\geq 
-2p(p-2)|\omega|^{p-2}\Lambda(A,B)|X||Y|\,,
\end{equation}
while Lemma~\ref{l: N prop of Deltap}~(\ref{L4 (v)}) and \eqref{eq: N 1} give
\begin{equation}
\label{eq: ineq2}
H_{F_p}^A[\zeta;X]+H_{F_p}^B[\eta;Y]
\geq \frac{p^2\Delta_p(A,B)}{(\kappa_p\sqrt{2})^{p-2}}
|\omega|^{p-2}|X||Y|\,.
\end{equation}
In order to finish the proof, combine \eqref{eq: latif}, \eqref{eq: ineq1} and \eqref{eq: ineq2}. 
\end{proof}
\subsection{Definition of $\cP_{n}$}\label{s: def Pn}
Fix $p>2$ and $A,B\in\cA(\Omega)$ such that $\Delta_{p}(A,B)>0$. By Lemma~\ref{l: N prop of Deltap}~(\ref{L4 (iii)}) there exists $\epsilon=\epsilon(p,A,B)>0$ such that $\Delta_{p+\epsilon}(A,B)>0$. For this particular $\epsilon>0$ and all $n\in\N_{+}$ define $f_{n}$ by \eqref{eq: deffn}. For every $l\in\N_{+}$ define $\cF_{n}:(\R^{2})^{l}\rightarrow\R_{+}$ by
$$
\cF_{n}(\omega):=f_{n}(|\omega|),\quad \omega\in(\R^{2})^{l}.
$$
Let $\kappa_{p+\epsilon}$ and $K_{p+\epsilon}$ be the two constants given by \eqref{eq: def k_{p}} and \eqref{eq: def K_{p}}. We define
$$
\cP_n(\zeta,\eta)
:=\cF_{n}(\zeta,\eta)+K_{p+\epsilon}\left(\cF_{n}(\zeta)+\cF_{n}(\eta)\right),\quad (\zeta,\eta)\in\R^{2}\times\R^{2}.
$$
For any $n\in\N_{+}$, consider the set $\Theta_{n}\subset\R^4$ defined by
$$
\Theta_{n}=\left\{|\zeta|^{2}+|\eta|^{2}=n^{2}\right\}\cup\left\{|\zeta|=n\right\}\cup\left\{|\eta|=n\right\}.
$$
\begin{proposition}
\label{p: N 2}
\ 
\begin{enumerate}[\rm (i)]  
\item 
\label{eq: y1}
$\cP_{n}\in C^{1}(\R^{4})\cap C^{2}(\R^{4}\setminus\Theta_{n})$ for all $n\in\N_{+}$. Moreover,
$$
\aligned
D\cP_{n}\rightarrow 0 
&\quad  \text{pointwise in  } \R^{4}\\
D^{2}\cP_{n}\rightarrow 0 &
\quad\text{pointwise in  }\R^{4}\setminus\bigcup_{k\in\N_{+}}\Theta_{k}
\endaligned
$$
as $n\rightarrow\infty$. 
\item 
\label{eq: y2}
$\cP_{n}$ is $(A,B)$-convex in $\R^{4}\setminus\Theta_{n}$, for all $n\in\N_{+}$. Moreover, for 
all $n\in\N_{+}$ and all $\omega\in \R^{4}\setminus\Theta_{n}$ with $|\omega|>n$, we have
$$
H^{(A,B)}_{\cP_{n}}[\omega;(X,Y)]\geq (p+\epsilon)n^{p-2}\lambda(A,B)\left(|X|^{2}+|Y|^{2}\right),\quad \forall X,Y\in\R^{2d}.
$$
\item 
\label{eq: y3}
There exists $C>0$ that does not depend on $n$ such that
$$
\aligned
\mod{(D \cP_{n})(\omega)}&\leq C\left(|\zeta|^{p-1}+|\eta|^{p-1}\right),\quad \forall\omega\in\R^{4},\quad \forall n\in\N_{+};\\
\mod{(D^{2}\cP_{n})(\omega)}&\leq C\left(|\zeta|^{p-2}+|\eta|^{p-2}\right),\quad \forall\omega\in\R^{4}\setminus\Theta_{n},\quad \forall n\in\N_{+}.
\endaligned
$$
\item
\label{eq: y4}
For every $n\in\N_{+}$ there exists $C(n)>0$ such that
$$
\mod{D\cP_{n}(\omega)}\leq C(n)|\omega|,\quad \forall\omega\in\R^{4}.
$$
\item
\label{eq: y5}
$D^{2}\cP_{n}\in L^{\infty}(\R^{4}\setminus\Theta_{n};\R^{4\times4})$, for all $n\in\N_{+}$.
\end{enumerate}
 \end{proposition}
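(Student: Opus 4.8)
The plan is to obtain all five assertions from elementary one-variable analysis of the profile $f_{n}$ in \eqref{eq: deffn}, combined with two results already established: Proposition~\ref{p: N modpw}, applied with $p+\epsilon$ in place of $p$ (legitimate because $\epsilon$ was chosen so that $\Delta_{p+\epsilon}(A,B)>0$, and $p+\epsilon>2$), and the pointwise formula of Lemma~\ref{l: N prop of Deltap}~(\ref{L4 (v)}). For the regularity in~(i), a direct inspection of $f_{n},f_{n}',f_{n}''$ at $t=n$ shows that $f_{n}$ is $C^{1}$ but not $C^{2}$ there and is smooth elsewhere; since $p+\epsilon>2$ each radial power $\gamma\mapsto|\gamma|^{p+\epsilon}$ is $C^{2}$ on $(\R^{2})^{l}$, so $\cF_{n}\in C^{1}((\R^{2})^{l})\cap C^{2}((\R^{2})^{l}\setminus\{|\omega|=n\})$ and therefore $\cP_{n}\in C^{1}(\R^{4})\cap C^{2}(\R^{4}\setminus\Theta_{n})$. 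For the limits in~(i), fix $\omega$: once $n>|\omega|$, and hence also $n>|\zeta|,|\eta|$ (because $|\zeta|,|\eta|\leq|\omega|$), a ball around $\omega$ lies in the regime $t<n$ of all three summands of $\cP_{n}$, so there $\cP_{n}=n^{-\epsilon}P_{p+\epsilon}$ with $P_{p+\epsilon}$ the modified power function \eqref{eq: Pp}; consequently $D\cP_{n}(\omega)=n^{-\epsilon}DP_{p+\epsilon}(\omega)\to0$ and, off $\bigcup_{k}\Theta_{k}$, $D^{2}\cP_{n}(\omega)=n^{-\epsilon}D^{2}P_{p+\epsilon}(\omega)\to0$.

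For the $(A,B)$-convexity in~(ii), I would split $\R^{4}\setminus\Theta_{n}=\{|\omega|<n\}\sqcup\bigl(\{|\omega|>n\}\setminus\Theta_{n}\bigr)$. On $\{|\omega|<n\}$ one has $\cP_{n}=n^{-\epsilon}P_{p+\epsilon}$, so convexity is immediate from Proposition~\ref{p: N modpw}. On $\{|\omega|>n\}\setminus\Theta_{n}$, near $\omega$ the summand $\cF_{n}((\cdot,\cdot))$ equals $\tfrac{p+\epsilon}{2}n^{p-2}|\omega|^{2}$ plus a constant, so its Hessian is $(p+\epsilon)n^{p-2}I_{\R^{4}}$; by \eqref{eq: real form} and accretivity \eqref{eq: N ellipticity} it contributes exactly $(p+\epsilon)n^{p-2}\bigl(\langle\cM(A^{*})X,X\rangle+\langle\cM(B^{*})Y,Y\rangle\bigr)\geq(p+\epsilon)n^{p-2}\lambda(A,B)\bigl(|X|^{2}+|Y|^{2}\bigr)$ to $H^{(A,B)}_{\cP_{n}}[\omega;(X,Y)]$. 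The remaining contribution $K_{p+\epsilon}\bigl(H^{A}_{\cF_{n}}[\zeta;X]+H^{B}_{\cF_{n}}[\eta;Y]\bigr)$ is nonnegative: where $|\zeta|>n$ the $\zeta$-term is $(p+\epsilon)n^{p-2}\langle\cM(A^{*})X,X\rangle\geq0$, and where $|\zeta|<n$ it is $n^{-\epsilon}H^{A}_{F_{p+\epsilon}}[\zeta;X]\geq0$ by Lemma~\ref{l: N prop of Deltap}~(\ref{L4 (v)}) together with $\Delta_{p+\epsilon}(A)\geq\Delta_{p+\epsilon}(A,B)>0$ (and symmetrically for the $\eta$-term). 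This gives both the global convexity and the quantitative lower bound for $|\omega|>n$.

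Assertions~(iii)--(v) follow from uniform size bounds on $f_{n}'$ and $f_{n}''$, obtained by treating $t\leq n$ and $t>n$ separately. For $t\leq n$ write $t^{p+\epsilon-1}=t^{\epsilon}t^{p-1}\leq n^{\epsilon}t^{p-1}$ and $t^{p+\epsilon-2}=t^{\epsilon}t^{p-2}\leq n^{\epsilon}t^{p-2}$; for $t>n$ use $n^{p-2}<t^{p-2}$. This yields $f_{n}'(t)\leq(p+\epsilon)t^{p-1}$ and $f_{n}''(t)+f_{n}'(t)/t\leq C_{p,\epsilon}\,t^{p-2}$, both uniform in $n$; passing to the radial lifts and using $|\zeta|,|\eta|\leq|\omega|$ and $|\omega|^{s}\leq C_{s}(|\zeta|^{s}+|\eta|^{s})$ gives~(iii). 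For~(iv), on $t\leq n$ estimate instead $t^{p+\epsilon-1}=t^{p+\epsilon-2}\cdot t\leq n^{p+\epsilon-2}t$, so $f_{n}'(t)\leq(p+\epsilon)n^{p-2}t$ for all $t$, whence $|D\cP_{n}(\omega)|\leq C(n)|\omega|$. For~(v), since $f_{n}''$ and $t\mapsto f_{n}'(t)/t$ are increasing on $[0,n]$ and constant on $[n,\infty)$, one has $\sup_{t>0}\bigl(f_{n}''(t)+f_{n}'(t)/t\bigr)\leq C_{p,\epsilon}\,n^{p-2}<\infty$, so $D^{2}\cP_{n}\in L^{\infty}(\R^{4}\setminus\Theta_{n})$.

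I expect the only genuinely delicate part to be~(ii): one must carry along the three distinct radii $|\omega|$, $|\zeta|$, $|\eta|$ attached to the three summands of $\cP_{n}$ — in particular $\{|\omega|>n\}$ can meet $\{|\zeta|<n\}$ or $\{|\eta|<n\}$, so the $\zeta$- or $\eta$-summand may still be in its power-function regime there — and one has to see that these lower-dimensional power perturbations do not spoil positivity, which is precisely where Lemma~\ref{l: N prop of Deltap}~(\ref{L4 (v)}) and the openness of $p$-ellipticity (the choice of $\epsilon$) come in. Everything else is routine calculus on $f_{n}$.
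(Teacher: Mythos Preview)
Your proof is correct and follows essentially the same approach as the paper: the split of $\R^{4}\setminus\Theta_{n}$ into $\{|\omega|<n\}$ (where $\cP_{n}=n^{-\epsilon}P_{p+\epsilon}$ and Proposition~\ref{p: N modpw} applies) and $\{|\omega|>n\}\setminus\Theta_{n}$ (where the quadratic 4-variable part gives the coercive lower bound and the two 2-variable terms are handled case by case via Lemma~\ref{l: N prop of Deltap}~(\ref{L4 (v)})) is exactly the paper's argument for~(ii). For items~(i), (iii)--(v) you supply more explicit one-variable calculus on $f_{n}$ than the paper, which simply declares these ``immediate consequences of the definition'' or ``easily follow from definitions''.
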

\begin{proof}
Item (\ref{eq: y1}) is an immediate consequence of the definition of $\cP_n$.

We now prove item (\ref{eq: y2}). Let $\omega=(\zeta,\eta)\in(\R^{2}\times\R^{2})\setminus\Theta_{n}$. Suppose first that $|\omega|<n$; then $|\zeta|<n$ and $|\eta|<n$. Hence, in this case, $
\cF_n(\zeta)=n^{-\epsilon}F_{p+\epsilon}(\zeta)$, $\cF_n(\eta)=n^{-\epsilon}F_{p+\epsilon}(\eta)$ and $\cF_{n}(\omega)=n^{-\epsilon}F_{p+\epsilon}(\omega)$.
Therefore $\cP_n(\omega)=n^{-\epsilon}P_{p+\epsilon}(\omega)$ for all $\omega\in \R^{4}\setminus\Theta_{n}$ with $|\omega|<n$ and the $(A,B)$-convexity follows from Proposition~\ref{p: N modpw}. 

Suppose now that
$|\omega|>n$. Then,
$$
\cP_{n}(\omega)=\frac{p+\epsilon}{2}n^{p-2}|\omega|^{2}+\left(1-\frac{p+\epsilon}{2}\right)n^{p}+K_{p+\epsilon}\left(\cF_n(\zeta)+\cF_n(\eta)\right).
$$
Therefore,
$$
\aligned
H^{(A,B)}_{\cP_{n}}&[\omega;\cW_{2,d}(\sigma_{1},\sigma_{2})]\\
&=a\left(\Re\sk{A\sigma_{1}}{\sigma_{1}}+\Re\sk{B\sigma_{2}}{\sigma_{2}}\right)+K_{p+\epsilon}\left(H^{A}_{\cF_{n}}[\zeta;\cV_{d}(\sigma_{1})]+H^{B}_{\cF_{n}}[\eta;\cV_{d}(\sigma_{2})]\right)\\
&\geq a\lambda(A,B)\left(|\sigma_{1}|^{2}+|\sigma_{2}|^{2}\right)+K_{p+\epsilon}\left(H^{A}_{\cF_{n}}[\zeta;\cV_{d}(\sigma_{1})]+H^{B}_{\cF_{n}}[\eta;\cV_{d}(\sigma_{2})]\right),
\endaligned
$$
where $a=(p+\epsilon)n^{p-2}$. Since 
$$
H^{C}_{\cF_{n}}[u;\xi]=
\left\{
\begin{array}{rl}
{\displaystyle n^{-\epsilon}H^{C}_{F_{p+\epsilon}}[u;\xi],}& |u|<n;\\
{\displaystyle \tfrac{p+\epsilon}{2}n^{p-2}H^{C}_{F_{2}}[u;\xi],}&|u|>n
\end{array}
\right.
$$
and $\Delta_{p+\epsilon}(A,B)>0$, we deduce from Lemma~\ref{l: N prop of Deltap} that 
$$
H^{(A,B)}_{\cP_{n}}[\omega;\cW_{2,d}(\sigma_{1},\sigma_{2})]\geq   (p+\epsilon)n^{p-2}\lambda(A,B)\left(|\sigma_{1}|^{2}+|\sigma_{2}|^{2}\right)
$$
for all $\sigma_{1},\sigma_{2}\in\C^{d}$ and all $\omega\in \R^{4}\setminus\Theta_{n}$ with $|\omega|>n$. This finishes the proof of item (\ref{eq: y2}). 
Items (\ref{eq: y3}), (\ref{eq: y4}) and (\ref{eq: y5})  easily follow from definitions.
\end{proof}

\section{The sequence $\{\cR_{n,\nu}\}_{n\in\N_{+}}$}\label{s: conv}
Let $p> 2$ and $q=p/(p-1)$. Fix $A,B\in\cA(\Omega)$ with $\Delta_{p}(A,B)>0$. Let $\cQ=\cQ_{\delta}$ denote the Nazarov-Treil Bellman function introduced in \eqref{eq: N Bellman} with $\delta>0$ chosen so that Theorem~\ref{t: N B gen conv} holds true.  

Fix a radial function $\f\in C_c^\infty(\R^4)$ such that $0\leq\f\leq1$, ${\rm supp}\, \f\subset B_{\R^{4}}(0,1)$ and $\int\f=1$. Also, fix a radial function $\psi\in C^{\infty}_{c}(\R^{4})$ such that $\psi\geq 0$, $\psi=1$ on $\{|\omega|\leq 3\}$ and $\psi=0$ on $\{|\omega|>4\}$. For $\nu\in(0,1]$ and $n\in\N_{+}$ define $\varphi_\nu(\omega)=\nu^{-4}\varphi(\omega/\nu)$ and $\psi_{n}(\omega)=\psi(\omega/n)$. 
\subsection*{Notation}
Let $\{\cP_{n}\}_{\in\N_{+}}$ be the sequence of Section~\ref{s: def Pn}. For every $n\in\N_{+}$ and all $\nu\in (0,1]$, define
\begin{equation}\label{eq: d Rnv}
\aligned
&\cQ_{n,\nu}:=\psi_{n}\cdot (\cQ\star\varphi_{\nu});\\
&\cR_{n,\nu}:=\cQ_{n,\nu}+C_{1}\nu^{q-2}(\cP_{n}\star\varphi_{\nu}),
\endaligned
\end{equation}
where $C_{1}=C_{1}(p,A,B,\psi)>0$ is a constant not depending on $\nu$ which will be fixed later.
\subsection{Estimates for $\cQ\star\varphi_{\nu}$}\label{s: estest} 
Next result was proven in \cite[Corollary 5.5]{CD-DivForm}.

\begin{proposition}\label{p: N gen conv reg} 
Suppose that $p\geq2$ and $A,B\in \cA(\Omega)$ satisfy $\Delta_p(A,B)>0$. Then $\cQ\star\varphi_{\nu}$ is $(A,B)$-convex in $\R^{4}$.
More specifically, for almost every $x\in\Omega$ we have
$$
H_{\cQ\star\varphi_{\nu}}^{(A(x),B(x))}[\omega;(X,Y)]
\geq \frac{\Delta_p(A,B)}{5}\cdot\frac{\lambda(A,B)}{\Lambda(A,B)}|X||Y|,
$$
for any $\omega\in\R^{4}$ and $X,Y\in\R^{2d}$.
\end{proposition}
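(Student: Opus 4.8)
The plan is to deduce the proposition from the strict $(A,B)$-convexity of $\cQ$ on $\R^{4}\setminus\Upsilon$ provided by Theorem~\ref{t: N B gen conv}, by averaging the pointwise lower bound proved there against the mollifier $\varphi_{\nu}$. Two elementary observations make this work. First, by the very definition of the generalised Hessian, $H^{(A(x),B(x))}_{\Phi}[\omega;(X,Y)]$ is --- for fixed $x$, $\omega$, $X$, $Y$ --- a \emph{linear} functional of the ordinary Hessian matrix $(D^{2}\Phi)(\omega)$. Second, convolution with $\varphi_{\nu}$ is an average against a nonnegative density of total mass $1$.

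First I would record the regularity input: since $\cQ$ is $C^{1}(\R^{4})$ and its classical Hessian (defined at every point of the full-measure set $\R^{4}\setminus\Upsilon$) is locally integrable on $\R^{4}$, one has $\cQ\in W^{2,1}_{{\rm loc}}(\R^{4})$ with weak Hessian equal a.e.\ to the classical one, so that differentiating twice under the convolution integral is legitimate:
$$
(D^{2}(\cQ\star\varphi_{\nu}))(\omega)=\int_{\R^{4}}(D^{2}\cQ)(\omega-y)\,\varphi_{\nu}(y)\wrt y\,,\qquad\omega\in\R^{4}.
$$
Combined with the linearity noted above, this yields, for a.e.\ $x\in\Omega$ and every $\omega\in\R^{4}$ and $X,Y\in\R^{2d}$,
$$
H^{(A(x),B(x))}_{\cQ\star\varphi_{\nu}}[\omega;(X,Y)]=\int_{\R^{4}}H^{(A(x),B(x))}_{\cQ}[\omega-y;(X,Y)]\,\varphi_{\nu}(y)\wrt y.
$$

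The step I expect to be the only genuine obstacle is exactly this regularity input, because $\cQ$ is merely globally $C^{1}$ and one must check that its distributional Hessian carries no singular part. Away from the hypersurface $\{|\zeta|^{p}=|\eta|^{q}\}$ and from $\{\eta=0\}$ this is clear; across $\{|\zeta|^{p}=|\eta|^{q}\}$ it holds because the two one-sided gradients of $\cQ$ agree there (as $\cQ\in C^{1}$); and near $\{\eta=0\}$ the classical Hessian of $\cQ$, read off from the explicit formula \eqref{eq: N Bellman}, grows no faster than a negative power of $|\eta|$ strictly larger than $-2$, hence is locally integrable on $\R^{4}$. This is already part of the analysis of $\cQ$ carried out in \cite{CD-DivForm}.

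Granting this, the proof is finished. Fix $x$ in the full-measure subset of $\Omega$ on which the conclusion of Theorem~\ref{t: N B gen conv} holds for every $\omega\in\R^{4}\setminus\Upsilon$ and all $X,Y\in\R^{2d}$, and fix $\omega\in\R^{4}$, $X,Y\in\R^{2d}$. Since $\Upsilon$ is Lebesgue-null, $\omega-y\notin\Upsilon$ for a.e.\ $y\in\R^{4}$, whence, by Theorem~\ref{t: N B gen conv}, for a.e.\ $y\in\R^{4}$,
$$
H^{(A(x),B(x))}_{\cQ}[\omega-y;(X,Y)]\geq\frac{\Delta_p(A,B)}{5}\cdot\frac{\lambda(A,B)}{\Lambda(A,B)}|X||Y|.
$$
Integrating this inequality against the density $\varphi_{\nu}(y)\wrt y$ and invoking the convolution identity for the generalised Hessian from the second paragraph gives the claimed lower bound for $H^{(A(x),B(x))}_{\cQ\star\varphi_{\nu}}[\omega;(X,Y)]$; in particular $\cQ\star\varphi_{\nu}$ is $(A,B)$-convex on all of $\R^{4}$.
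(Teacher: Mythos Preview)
Your proposal is correct and is essentially the standard approach: the paper does not reprove this proposition but cites it from \cite[Corollary~5.5]{CD-DivForm}, where the argument is precisely the convolution/averaging one you give, and the same identity
\[
H^{(A(x),B(x))}_{\Phi\star\varphi_{\nu}}[\omega;(X,Y)]=\int_{\R^{4}}H^{(A(x),B(x))}_{\Phi}[\omega-\omega';(X,Y)]\,\varphi_{\nu}(\omega')\wrt\omega'
\]
is used verbatim later in the paper (see the proof of Proposition~\ref{t: N Pn conv}~(\ref{eq: enfants}) via \eqref{eq: N conv}). Your regularity discussion (that $\cQ\in C^{1}$, the classical Hessian is locally integrable by \eqref{eq: Neumann 7}, and $\Upsilon$ is null so there is no singular part) is exactly what justifies passing the two derivatives onto $\cQ$ under the convolution, and matches the reasoning the paper records for $\cP_{n}$ just before \eqref{eq: N conv}.
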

We shall need estimates of the first- and second-order partial derivatives of $Q*\varphi_{\nu}$. As a consequence of \eqref{eq: N 5} we have (recall that $\delta$ is fixed): 
\begin{equation}
\label{eq: N 6}
\aligned
0\leqslant (\cQ\star\varphi_{\nu})(\zeta,\eta)
&\leqsim_{p} (|\zeta|+\nu)^p+(|\eta|+\nu)^q,\\
\left|\partial_\zeta (\cQ\star\varphi_{\nu})(\zeta,\eta)\right|
&\leqsim_{p}\max\left\{(|\zeta|+\nu)^{p-1}, |\eta|+\nu\right\},\\ 
\left|\partial_\eta (\cQ\star\varphi_{\nu})(\zeta,\eta)\right|
&\leqsim_{p} (|\eta|+\nu)^{q-1},
\endaligned
\end{equation}
for all $\zeta,\eta\in\R^{2}$ and $\nu\in(0,1)$, see \cite[Theorem~4]{CD-Riesz}. Also, a calculation shows that
\begin{equation}\label{eq: Neumann 7}
\mod{(D^{2}\cQ)(\zeta,\eta)}\leqsim_{p,\delta}\, |\zeta|^{p-2}+|\eta|^{q-2}+|\eta|^{2-q}+1,
\end{equation}
for all $(\zeta,\eta)\in\R^{4}\setminus\Upsilon$, where $\Upsilon$ is defined on page \pageref{eq: N flow}.
\begin{lemma}
\label{l: N second order}
There exists $C=C(p,\delta)>0$ such that 
\begin{enumerate}[{\rm (i)}]
\item
\label{eq: est Q*phi}
${\displaystyle \hskip 5pt|(\cQ\star\varphi_{\nu})(\omega)|\leq C\left(|\omega|^{p}+|\omega|^{q}+1\right);}$
\item
\label{eq: est Q*phi i)}
${\displaystyle \hskip 5pt\mod{D(\cQ\star\varphi_{\nu})(\omega)}\leq C\left(|\omega|^{p-1}+|\omega|^{q-1}\right);}$
\item
\label{eq: est Q*phi ii)}
${\displaystyle \mod{D^{2}(\cQ\star\varphi_{\nu})(\omega)}
\leq C\nu^{q-2}\left( |\zeta|^{p-2}+|\eta|^{2-q}+1\right)}$
\end{enumerate}
for all $\nu\in (0,1]$ and $\omega=(\zeta,\eta)\in\R^{2}\times\R^{2}$. 
\end{lemma}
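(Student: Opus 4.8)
The plan is to prove the three bounds of Lemma~\ref{l: N second order} essentially by transferring the pointwise estimates \eqref{eq: N 5} and \eqref{eq: Neumann 7} for $\cQ$ through the convolution with $\varphi_\nu$, using that $\varphi_\nu$ is supported in $B_{\R^4}(0,\nu)$ with $\nu\le 1$. Items (\ref{eq: est Q*phi}) and (\ref{eq: est Q*phi i)}) follow immediately from \eqref{eq: N 6}: since $|\zeta|+\nu\leq |\omega|+1\leqsim_p \max\{|\omega|,1\}$ and $|\eta|+\nu\leq|\omega|+1$, the right-hand sides of the first two lines of \eqref{eq: N 6} are $\leqsim_{p,\delta}(|\omega|^p+|\omega|^q+1)$ and $\leqsim_{p,\delta}(|\omega|^{p-1}+|\omega|^{q-1})$ respectively (here one uses $p>2>1>q$, so for $|\omega|$ large the dominant term is $|\omega|^p$, and for $|\omega|$ small the additive constant absorbs everything; the middle line's $\max$ with $|\eta|+\nu$ contributes only the harmless linear term $\leqsim |\omega|+1$). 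So (\ref{eq: est Q*phi}) and (\ref{eq: est Q*phi i)}) are routine.

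The only delicate point is (\ref{eq: est Q*phi ii)}), where the factor $\nu^{q-2}$ (note $q-2<0$, so this blows up as $\nu\to0$) must be produced. First I would observe that, because $\cQ\in C^1(\R^4)$, one has $D^2(\cQ\star\varphi_\nu)=(D\cQ)\star(D\varphi_\nu)$, and then one is tempted to integrate by parts the other way to land $D^2\cQ\star\varphi_\nu$; but $D^2\cQ$ is only locally integrable (it has a $|\eta|^{2-q}$ singularity at $\eta=0$ from \eqref{eq: Neumann 7}, which is integrable in $\R^2$ since $2-q>-2$, and a jump across $\{|\zeta|^p=|\eta|^q\}$), so $\cQ\in C^1\cap C^2(\R^4\setminus\Upsilon)$ with $D^2\cQ\in L^1_{loc}$ actually gives $D^2(\cQ\star\varphi_\nu)=(D^2\cQ)\star\varphi_\nu$ pointwise. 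Then I split the convolution over the ball $B=B_{\R^4}(\omega,\nu)$ into the contribution of points with $|\eta'|\geq |\eta|/2$ and points with $|\eta'|<|\eta|/2$. On the first piece one uses \eqref{eq: Neumann 7} with $|\zeta'|\leq |\zeta|+\nu\leqsim |\zeta|+1$ and $|\eta'|^{2-q}\leqsim |\eta|^{2-q}$, together with $\|\varphi_\nu\|_1=1$, to get a bound $\leqsim_{p,\delta}(|\zeta|^{p-2}+|\eta|^{2-q}+1)$ — note no negative power of $\nu$ yet. On the second piece, which is only nonempty when $|\eta|<2\nu$, one estimates $\int_{|\eta'|<|\eta|/2}|\eta'|^{2-q}\varphi_\nu(\omega-\omega')\,d\omega'\leq \|\varphi_\nu\|_\infty\int_{|\eta'|<\nu}|\eta'|^{2-q}\,d\eta'\,\big(\text{vol in }\zeta'\big)\leqsim \nu^{-4}\cdot\nu^{4-q}\cdot\nu^{2}=\nu^{2-q}$; wait — one must be careful that $\nu^{2-q}\geq 1$ is not of the claimed form, so instead one keeps it as $\nu^{q-2}\cdot\nu^{2(2-q)}$...

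Let me instead restructure the estimate of the singular piece more carefully: the point is that on $B_{\R^4}(\omega,\nu)$ one has $|D^2\cQ(\omega')|\leqsim_{p,\delta}|\zeta'|^{p-2}+|\eta'|^{q-2}+|\eta'|^{2-q}+1$, and the genuinely bad term near $\eta'=0$ is $|\eta'|^{q-2}$ (exponent $q-2\in(-1,0)$, still integrable in $\R^2$); I bound $\big|(|\eta'|^{q-2})\star\varphi_\nu\big|(\omega)\leq \nu^{-4}\int_{|\omega-\omega'|<\nu}|\eta'|^{q-2}d\omega'$. When $|\eta|\geq 2\nu$ this is $\leqsim |\eta|^{q-2}\leqsim |\eta|^{2-q}+1$ (since $|\eta|\geq2\nu$ gives both directions); when $|\eta|<2\nu$ one has $\nu^{-4}\int_{|\eta'|<3\nu}|\eta'|^{q-2}d\eta'\cdot(\text{$\zeta'$-ball of radius }\nu\text{, vol}\leqsim\nu^2)\leqsim\nu^{-4}\cdot\nu^{q}\cdot\nu^2=\nu^{q-2}$, which is exactly the allowed factor times the constant $1$. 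Collecting: $|D^2(\cQ\star\varphi_\nu)(\omega)|\leqsim_{p,\delta}|\zeta|^{p-2}+|\eta|^{2-q}+1+\nu^{q-2}\leq C\nu^{q-2}(|\zeta|^{p-2}+|\eta|^{2-q}+1)$, using $\nu^{q-2}\geq1$ to absorb the non-$\nu$ terms. The main obstacle is precisely this bookkeeping of the singular term $|\eta|^{q-2}$ near $\eta=0$: one must dilate it correctly to see that the worst case $|\eta|\lesssim\nu$ produces exactly $\nu^{q-2}$ and not a worse power, and one must remember to also control the jump of $D^2\cQ$ across $\Upsilon\setminus\{\eta=0\}$, which is harmless because the jump is bounded there, contributing only $\leqsim1$ after convolution.
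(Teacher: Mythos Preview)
Your treatment of items (\ref{eq: est Q*phi}) and (\ref{eq: est Q*phi ii)}) is fine and matches the paper's approach: (\ref{eq: est Q*phi}) is immediate from \eqref{eq: N 6}, and for (\ref{eq: est Q*phi ii)}) you correctly identify that the only nontrivial term is the convolution of $|\eta'|^{q-2}$ with $\varphi_\nu$, which you bound by $\leqsim\nu^{q-2}$ exactly as the paper does (the paper writes out essentially the same computation).

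However, there is a genuine gap in your proof of item (\ref{eq: est Q*phi i)}). You claim it follows ``routinely'' from \eqref{eq: N 6}, invoking that ``for $|\omega|$ small the additive constant absorbs everything''. But there is \emph{no} additive constant on the right-hand side of (\ref{eq: est Q*phi i)}): the bound $C(|\omega|^{p-1}+|\omega|^{q-1})$ vanishes at $\omega=0$. The estimates \eqref{eq: N 6} only give $|D(\cQ\star\varphi_\nu)(\omega)|\leqsim (|\omega|+\nu)^{p-1}+(|\omega|+\nu)^{q-1}$, which for $|\omega|<\nu$ is of size $\sim\nu^{q-1}$, strictly larger than $|\omega|^{q-1}$ (recall $q-1\in(0,1)$). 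So the direct approach fails in the region $|\omega|<\nu$.

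The paper closes this gap by an argument you did not anticipate: one first proves (\ref{eq: est Q*phi ii)}), then observes that $\cQ$ and $\varphi_\nu$ are both even in each coordinate, hence so is $\cQ\star\varphi_\nu$, forcing $D(\cQ\star\varphi_\nu)(0)=0$. The mean value theorem together with (\ref{eq: est Q*phi ii)}) then gives, for $|\omega|<\nu\leq 1$,
\[
|D(\cQ\star\varphi_\nu)(\omega)|\leq \sup_{|\omega'|\leq 1}|D^2(\cQ\star\varphi_\nu)(\omega')|\cdot|\omega|\leqsim \nu^{q-2}|\omega|\leq |\omega|^{q-1},
\]
using $|\omega|<\nu$ in the last step. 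For $|\omega|\geq\nu$ the bound from \eqref{eq: N 6} suffices. Note in particular that (\ref{eq: est Q*phi ii)}) must be established \emph{before} (\ref{eq: est Q*phi i)}), contrary to the order in which you treated them.
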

\begin{proof}
Item (\ref{eq: est Q*phi}) directly follows from the first estimate in \eqref{eq: N 6}. Item (\ref{eq: est Q*phi ii)}) follows from \eqref{eq: Neumann 7} and the properties of convolution. Let us only treat in detail the convolution with the term with the negative exponent, $|\eta|^{q-2}$. We have
$$
\aligned
\int_{\R^{4}}|\eta^{\prime}|^{q-2}
&\varphi_{\nu}(\zeta-\zeta^{\prime},\eta-\eta^{\prime})\wrt\zeta^{\prime}\wrt\eta^{\prime}\\
&=\nu^{-2}\int_{\R^{2}}|\eta^{\prime}|^{q-2}\left[\nu^{-2}\int_{\R^{2}}\varphi\left(\frac{\zeta-\zeta^{\prime}}{\nu},\frac{\eta-\eta^{\prime}}{\nu}\right)\wrt\zeta^{\prime}\right]\wrt\eta^{\prime}\\
&=\nu^{-2}\int_{\R^{2}}|\eta^{\prime}|^{q-2}\int_{\R^{2}}\varphi\left(\zeta^{\prime},\frac{\eta-\eta^{\prime}}{\nu}\right)\wrt\zeta^{\prime}\wrt\eta^{\prime}\\
&=\nu^{-2}\int_{\{|\eta^{\prime}-\eta|<\nu\}}|\eta^{\prime}|^{q-2}\int_{\{|\zeta^{\prime}|<1\}}\varphi\left(\zeta^{\prime},\frac{\eta-\eta^{\prime}}{\nu}\right)\wrt\zeta^{\prime}\wrt\eta^{\prime}\\
&\leq \norm{\varphi}{\infty}|B_{\R^{2}}(0,1)|\nu^{-2}\int_{\{|\eta^{\prime}-\eta|\leq\nu\}}|\eta^{\prime}|^{q-2}\wrt\eta^{\prime}\\
&\leqsim\nu^{-2}
\left(\int_{\{|\eta^{\prime}-\eta|\leq\nu\}\cap\{|\eta^{\prime}|\geq\nu\}}+\int_{\{|\eta^{\prime}|<\nu\}}\right)|\eta^{\prime}|^{q-2}\wrt\eta^{\prime}\\
&\leqsim\nu^{-2}
\left(\int_{\{|\eta^{\prime}-\eta|\leq\nu\}}\nu^{q-2}\wrt\eta^{\prime}+\int_0^\nu r^{q-2} r\wrt r\right)\\
&\leqsim \nu^{q-2}.
\endaligned
$$
Now we prove (\ref{eq: est Q*phi i)}).
Let $j\in \{1,2,3,4\}$. Since $\cQ$ and $\varphi_{\nu}$ are even functions in each of the variables $\zeta_{1},\zeta_{2},\eta_{1},\eta_{2}$, function $\cQ\star\varphi_{\nu}$ also has this property, so 
\begin{equation}
\label{eq: ENG-SWE}
D_{j}(\cQ\star\varphi_{\nu})(0,0)=0.
\end{equation}

\noindent Hence, by item (\ref{eq: est Q*phi ii)}) and the mean value theorem, if $|\omega|<\nu\leq 1$ we get
$$
\mod{D_{j}(\cQ\star\varphi_{\nu})(\omega)}
\leq\underset{|\omega|\leq 1}{\max}\mod{D^{2}(\cQ\star\varphi_{\nu})(\omega)}|\omega|\leq C\nu^{q-2}|\omega|
\leq C|\omega|^{q-1}.
$$
By the second and third estimate in \eqref{eq: N 6}, there exists $C>0$ not depending on $\nu\in (0,1)$ and such that
\begin{alignat*}{2}
\mod{D_{j}(\cQ\star\varphi_{\nu})(\omega)}\leq C\left(|\omega|^{p-1}+|\omega|^{q-1}\right),\quad \forall |\omega|\geq\nu,\quad \forall\nu\in (0,1].
\tag*{\qedhere}
\end{alignat*}
\end{proof}
\subsection{Estimates for $\cP_{n}\star\varphi_{\nu}$}\label{s: Pn*phi}
Since $ \cP_{n}\in C^1(\R^4)$ and its second-order partial derivatives exist on $\R^4\setminus \Theta_{n}$ and extend to a locally integrable function on $\R^4$, we have
\begin{equation}
\label{eq: N conv}
D_{j}(\cP_{n}\star\varphi_{\nu})=(D_{j}\cP_{n})\star\varphi_{\nu};\quad D^{2}_{ij}(\cP_{n}\star\varphi_{\nu})=(D^{2}_{ij}\cP_{n})\star\varphi_{\nu},\quad i,j=1,\dots,4.
\end{equation}

\begin{proposition}\label{t: N Pn conv}
Let $\nu\in (0,1)$.
\begin{enumerate}[\rm (i)]
\item 
\label{eq: alons}
The functions $D(\cP_{n}\star\varphi_{\nu})$ and $D^2(\cP_{n}\star\varphi_{\nu})$ converge pointwise to $0$ in $\R^{4}$ as $n\rightarrow\infty$.
\item
\label{eq: enfants}
The function $\cP_{n}\star\varphi_{\nu}$ is $(A,B)$-convex in $\R^{4}$. Moreover, for all $n\in\N_{+}$, $X,Y\in\R^{2d}$ and all $\omega$ with $|\omega|>2n$, 
$$
H^{(A,B)}_{\cP_{n}\star\varphi_{\nu}}[\omega;(X,Y)]
\geq 
(p+\epsilon)n^{p-2}\lambda(A,B)\left(|X|^{2}+|Y|^{2}\right).
$$
\item
\label{eq: de}
There exists $C>0$ that does not depend on $n$ and $\nu$ such that
$$
\mod{D(\cP_{n}\star\varphi_{\nu})(\omega)}\leq C(|\omega|^{p-1}+|\omega|^{q-1}),\quad \forall\omega\in\R^{4},\quad\forall n\in\N_{+}.
$$
\item
\label{eq: la}
For every $n\in\N_{+}$ there exists $C(n)>0$ (that does not depend on $\nu$) such that
$$
\mod{D(\cP_{n}*\varphi_{\nu})(\omega)}\leq C(n)|\omega|,\quad \forall\omega\in\R^{4}.
$$
\item 
\label{eq: patrie}
$D^{2}(\cP_{n}\star\varphi_{\nu})\in L^{\infty}(\R^{4};\R^{4\times4})$ 
and $\norm{D^{2}(\cP_{n}\star\varphi_{\nu})}{\infty}\leq C(n)$ independently of $\nu$.
\end{enumerate}
\end{proposition}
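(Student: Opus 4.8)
The plan is to deduce each of the five assertions from the corresponding property of $\cP_{n}$ recorded in Proposition~\ref{p: N 2}, transferring it to $\cP_{n}\star\varphi_{\nu}$ by means of the identities \eqref{eq: N conv} together with elementary properties of convolution with the probability density $\varphi_{\nu}$. Two facts are used throughout. First, $\varphi_{\nu}\geq0$, $\int\varphi_{\nu}=1$ and ${\rm supp}\,\varphi_{\nu}\subset B_{\R^{4}}(0,\nu)$ with $\nu\leq1\leq n$, so convolution with $\varphi_{\nu}$ displaces a point by at most $\nu$. Second, for a.e.\ fixed $x\in\Omega$ the quadratic form $H^{(A(x),B(x))}_{\Phi}[\omega;\Xi]$ depends \emph{linearly} on the Hessian $(D^{2}\Phi)(\omega)$; since $D^{2}_{ij}\cP_{n}$ is locally integrable and $\Theta_{n}$ has Lebesgue measure zero, this gives
$$
H^{(A(x),B(x))}_{\cP_{n}\star\varphi_{\nu}}[\omega;(X,Y)]=\int_{\R^{4}}H^{(A(x),B(x))}_{\cP_{n}}[\omega-\omega';(X,Y)]\,\varphi_{\nu}(\omega')\wrt\omega'.
$$

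Granting this, part (ii) is the conceptual core. The integrand above is nonnegative for a.e.\ $\omega'$ by the $(A,B)$-convexity of $\cP_{n}$ on $\R^{4}\setminus\Theta_{n}$ (Proposition~\ref{p: N 2}~(\ref{eq: y2})), and since $\varphi_{\nu}\geq0$, so is the integral; this is exactly $(A,B)$-convexity of $\cP_{n}\star\varphi_{\nu}$ on all of $\R^{4}$, by the same mechanism as in Proposition~\ref{p: N gen conv reg}. For the quantitative bound, if $|\omega|>2n$ then every $\omega'\in{\rm supp}\,\varphi_{\nu}$ satisfies $|\omega-\omega'|\geq|\omega|-\nu>2n-1\geq n$, so for a.e.\ $\omega'$ the lower bound of Proposition~\ref{p: N 2}~(\ref{eq: y2}) applies to the integrand; integrating it in $\omega'$ yields $H^{(A,B)}_{\cP_{n}\star\varphi_{\nu}}[\omega;(X,Y)]\geq(p+\epsilon)n^{p-2}\lambda(A,B)\,(|X|^{2}+|Y|^{2})$.

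Parts (iv) and (v) are then routine. From $D^{2}_{ij}(\cP_{n}\star\varphi_{\nu})=(D^{2}_{ij}\cP_{n})\star\varphi_{\nu}$ and $\|f\star\varphi_{\nu}\|_{\infty}\leq\|f\|_{\infty}$ we obtain part (v): $\|D^{2}(\cP_{n}\star\varphi_{\nu})\|_{\infty}\leq\|D^{2}\cP_{n}\|_{L^{\infty}(\R^{4}\setminus\Theta_{n})}\leq C(n)$, using Proposition~\ref{p: N 2}~(\ref{eq: y5}). Part (iv) follows because $\cP_{n}\star\varphi_{\nu}$ is smooth, has a globally bounded Hessian, and is even in each of its four coordinates (both $\cP_{n}$ and $\varphi_{\nu}$ being so), hence has vanishing gradient at the origin; the mean value theorem then gives $|D(\cP_{n}\star\varphi_{\nu})(\omega)|\leq C(n)|\omega|$. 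For part (i) we pass to the limit $n\to\infty$ under the integral in $D(\cP_{n}\star\varphi_{\nu})(\omega)=\int(D\cP_{n})(\omega-\omega')\varphi_{\nu}(\omega')\wrt\omega'$, and likewise for $D^{2}$, by dominated convergence: the integrands tend to $0$ for every $\omega'$ by Proposition~\ref{p: N 2}~(\ref{eq: y1}) (for $D^{2}$, for every $\omega'$ outside the translate of the null set $\bigcup_{k}\Theta_{k}$), while the uniform estimates of Proposition~\ref{p: N 2}~(\ref{eq: y3}), $|D\cP_{n}(\omega)|\leq C(|\zeta|^{p-1}+|\eta|^{p-1})$ and $|D^{2}\cP_{n}(\omega)|\leq C(|\zeta|^{p-2}+|\eta|^{p-2})$ with $p-2>0$, provide, after the displacement by $\omega'\in B_{\R^{4}}(0,\nu)$, a dominating function independent of $n$.

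The one assertion needing genuine care is part (iii), since the constant must be uniform in \emph{both} $n$ and $\nu$ and hold down to $\omega=0$. I would split into $|\omega|\leq1$ and $|\omega|>1$. For $|\omega|\leq1$, the uniform Hessian estimate of Proposition~\ref{p: N 2}~(\ref{eq: y3}) gives, after convolution, $|D^{2}(\cP_{n}\star\varphi_{\nu})(\omega)|\leq C$ on $B_{\R^{4}}(0,1)$ uniformly in $n,\nu$; combined with the vanishing of the gradient at the origin and the mean value theorem this yields $|D(\cP_{n}\star\varphi_{\nu})(\omega)|\leq C|\omega|\leq C|\omega|^{q-1}$, where the last step uses $0<q-1<1$ and $|\omega|\leq1$. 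For $|\omega|>1$, integrating the uniform gradient estimate of Proposition~\ref{p: N 2}~(\ref{eq: y3}) against $\varphi_{\nu}$ and using $|\omega-\omega'|\leq2|\omega|$ gives $|D(\cP_{n}\star\varphi_{\nu})(\omega)|\leq C|\omega|^{p-1}$. The two regimes combine to $|D(\cP_{n}\star\varphi_{\nu})(\omega)|\leq C(|\omega|^{p-1}+|\omega|^{q-1})$ with $C$ independent of $n$ and $\nu$, as required; this is the same kind of computation as in the proof of Lemma~\ref{l: N second order}~(\ref{eq: est Q*phi i)}). Thus the main obstacle is bookkeeping rather than anything conceptual, the key input being the convexity-averaging identity used for part (ii).
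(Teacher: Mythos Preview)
Your proposal is correct and follows essentially the same approach as the paper's own proof: each item is obtained from the corresponding item of Proposition~\ref{p: N 2} via the convolution identities \eqref{eq: N conv}, with the integral representation of the generalised Hessian for part~(\ref{eq: enfants}), dominated convergence for part~(\ref{eq: alons}), and the evenness/mean-value-theorem argument for parts~(\ref{eq: de}) and~(\ref{eq: la}). The split at $|\omega|=1$ in part~(\ref{eq: de}) is exactly what the paper does, and your justification that the convolved Hessian is uniformly bounded on $B_{\R^{4}}(0,1)$ (since $p-2>0$ in Proposition~\ref{p: N 2}~(\ref{eq: y3})) is the implicit content of the paper's one-line appeal to that estimate.
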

\begin{proof}
Item~(\ref{eq: alons}) follows by combining \eqref{eq: N conv}, Proposition~\ref{p: N 2} (\ref{eq: y1}) and (\ref{eq: y3}) with the dominated convergence theorem. Item~(\ref{eq: patrie}) follows from \eqref{eq: N conv} and Proposition~\ref{p: N 2} (\ref{eq: y5}).

By \eqref{eq: N conv} we have
$$
\aligned
H_{ \cP_{n}\star\varphi_{\nu}}^{(A(x),B(x))}[\omega;(X,Y)]&=\int_{\R^4}H_{ \cP_{n}}^{(A(x),B(x))}[\omega-\omega^{\prime};(X,Y)]\varphi_\nu(\omega^{\prime})\,\wrt\omega^{\prime},
\endaligned
$$
for all $x\in\Omega$, $\omega\in\R^{4}$ and $X,Y\in\R^{2d}$.
Since we assumed that $|\omega|>2n$ and since the support of the integrand is contained in $B_{\R^{4}}(0,\nu)$, we have $|\omega-\omega'|>2n-\nu>n$, therefore we may estimate the integrand by means of Proposition~\ref{p: N 2}~(\ref{eq: y2}) almost everywhere on $B_{\R^{4}}(0,\nu)$ and thus prove item~(\ref{eq: enfants}).
\medskip

Let us address item (\ref{eq: de}). We proceed much as in the proof of Lemma~\ref{l: N second order}~(\ref{eq: est Q*phi i)}). First consider $|\omega|\leq1$. 
The function $\cP_{n}\star\varphi_{\nu}$ is smooth and even in $\zeta_{1},\zeta_{2},\eta_{1},\eta_{2}$, so 
\begin{equation}
\label{eq: massena}
D(\cP_{n}\star\varphi_{\nu})(0)=0. 
\end{equation}
Hence, the second identity in \eqref{eq: N conv}, the second estimate of Proposition~\ref{p: N 2}~(\ref{eq: y3}) and the mean value theorem imply
$$
\mod{D(\cP_{n}*\varphi_{\nu})(\omega)}\leq C|\omega|\leq C|\omega|^{q-1},\quad \forall |\omega|\leq 1,\quad \forall n\in\N_{+}.
$$
Now take $|\omega|>1$. 
From the first identity in \eqref{eq: N conv} and the first estimate of Proposition~\ref{p: N 2}~(\ref{eq: y3}) we get
\begin{equation*}
\mod{D (\cP_{n}\star\varphi_{\nu})(\omega)}\leq C|\omega|^{p-1}.
\end{equation*} 
Thus we proved (\ref{eq: de}).

Finally, item (\ref{eq: la}) follows from item (\ref{eq: patrie}), \eqref{eq: massena} and the  mean value theorem.
\end{proof}

\subsection{Estimates for $\cR_{n,\nu}$} Recall the definition of $\cQ_{n,\nu}$ and $\cR_{n,\nu}$ in \eqref{eq: d Rnv}. It follows from Lemma~\ref{l: N second order} that there exists $C_{0}=C_{0}(p,\psi)>0$ such that 
\begin{equation}
\label{eq: N annulus}
\aligned
\mod{(D^{2}\cQ_{n,\nu})(\omega)}\leq C_{0} \nu^{q-2}n^{p-2},
\endaligned
\end{equation}
for every $\omega\in\R^{4}$ with $|\omega|\leq 5n$, and all $n\in\N_{+}$ and $\nu\in (0,1]$.
\begin{theorem}\label{t: N approx}
Let $\nu\in (0,1]$. There exists $C_{1}=C_{1}(p,A,B,\psi)>0$, not depending on $\nu$, such that $\cR_{n,\nu}$ is $(A,B)$-convex in $\R^{4}$ for all $n\in\N_{+}$. Moreover, the following statements hold.
\begin{enumerate}[{\rm (i)}]
\item
\label{eq: R1}
$D^{2}\cR_{n,\nu}\in L^{\infty}(\R^{4};\R^{4\times4})$.
\item
\label{eq: R2} 
We have 
$$
\aligned
D\cR_{n,\nu}  &\rightarrow D(\cQ\star\varphi_{\nu}),\\
D^2\cR_{n,\nu} &\rightarrow D^2(\cQ\star\varphi_{\nu})
\endaligned
$$
pointwise in $\R^4$ as $n\rightarrow\infty$.
\item
\label{eq: R3}
For any $n\in\N_{+}$ there exists $C(n,\nu,C_1)>0$ such that
$$
\mod{(D\cR_{n,\nu})(\omega)}\leq C(n,\nu)|\omega|,\quad \forall\omega\in\R^{4}.
$$
\item
\label{eq: R4}
There exists $C=C(\nu)>0$ that does not depend on $n$ such that
$$
\mod{(D\cR_{n,\nu})(\omega)}\leq C(\nu)\left(|\omega|^{p-1}+|\omega|^{q-1}
\right),
$$
for all $\omega\in\R^{4}$, $n\in\N_{+}$ and $\nu\in (0,1]$.
\item
\label{eq: R5} For any $n\in\N_{+}$ and $\nu>0$ we have
$$
(\partial_{\zeta} \cR_{n,\nu})(0,\eta)=0,\quad (\partial_{\eta} \cR_{n,\nu})(\zeta,0)=0,
$$
for all $\zeta,\eta\in\R^{2}$.
\end{enumerate}
\end{theorem}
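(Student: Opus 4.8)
The plan is to establish the $(A,B)$-convexity of $\cR_{n,\nu}$ first, since this is the only substantial point, and then to read off items (i)--(v) from the construction. Throughout, note that $\cQ\star\varphi_{\nu}$ and $\cP_{n}\star\varphi_{\nu}$ are $C^{\infty}$, hence so are $\cQ_{n,\nu}$ and $\cR_{n,\nu}$, and that $(A,B)$-convexity is stable under sums and under multiplication by positive scalars because $\Phi\mapsto H^{(A,B)}_{\Phi}$ is linear; I work pointwise in $x\in\Omega$, using $\lambda(A(x))\wedge\lambda(B(x))\geq\lambda(A,B)$ and $\Lambda(A(x))\vee\Lambda(B(x))\leq\Lambda(A,B)$ for a.e.\ $x$. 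I would split $\R^{4}$ into $\{|\omega|\leq2n\}$, $\{2n<|\omega|\leq4n\}$ and $\{|\omega|>4n\}$. On the first region, $\psi_{n}\equiv1$ near $\omega$, so $\cQ_{n,\nu}=\cQ\star\varphi_{\nu}$, which is $(A,B)$-convex by Proposition~\ref{p: N gen conv reg}; since $\cP_{n}\star\varphi_{\nu}$ is $(A,B)$-convex on all of $\R^{4}$ by Proposition~\ref{t: N Pn conv}~(\ref{eq: enfants}), so is $\cR_{n,\nu}$. On the third region, $\psi_{n}\equiv0$, hence $\cQ_{n,\nu}\equiv0$ and $\cR_{n,\nu}=C_{1}\nu^{q-2}(\cP_{n}\star\varphi_{\nu})$, again $(A,B)$-convex.

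The middle region is the crux, since there $\cQ_{n,\nu}$ need not be $(A,B)$-convex. Since $\|\cM(A(x)^{*})\oplus\cM(B(x)^{*})\|\leq\Lambda(A,B)$ for a.e.\ $x$, and using \eqref{eq: N annulus} for $|\omega|\leq5n$,
$$
\left|H^{(A(x),B(x))}_{\cQ_{n,\nu}}[\omega;(X,Y)]\right|\leq\Lambda(A,B)\,|D^{2}\cQ_{n,\nu}(\omega)|\,(|X|^{2}+|Y|^{2})\leq\Lambda(A,B)\,C_{0}\,\nu^{q-2}n^{p-2}\,(|X|^{2}+|Y|^{2}),
$$
while Proposition~\ref{t: N Pn conv}~(\ref{eq: enfants}) gives $H^{(A(x),B(x))}_{\cP_{n}\star\varphi_{\nu}}[\omega;(X,Y)]\geq(p+\epsilon)n^{p-2}\lambda(A,B)(|X|^{2}+|Y|^{2})$ whenever $|\omega|>2n$. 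Adding these, the common factor $\nu^{q-2}n^{p-2}$ pulls out and $H^{(A(x),B(x))}_{\cR_{n,\nu}}[\omega;(X,Y)]\geq0$ as soon as we set $C_{1}:=\Lambda(A,B)C_{0}/((p+\epsilon)\lambda(A,B))$; since $C_{0}=C_{0}(p,\psi)$ and $\epsilon=\epsilon(p,A,B)$, this $C_{1}$ depends only on $p,A,B,\psi$ and not on $n$ or $\nu$. The hard part is exactly this balancing: one must see that the (bounded, but $\nu$-dependent) generalised Hessian of $\cQ_{n,\nu}$ in the transition annulus is dominated by that of $C_{1}\nu^{q-2}(\cP_{n}\star\varphi_{\nu})$, which is possible with an $(n,\nu)$-independent $C_{1}$ precisely because the factor $\nu^{q-2}$ in \eqref{eq: d Rnv} matches the $\nu\to0$ blow-up of $D^{2}(\cQ\star\varphi_{\nu})$ and $\{\cP_{n}\}$ was built in Section~\ref{s: the sequence} to be strictly $(A,B)$-convex with Hessian of order $n^{p-2}$ outside $\{|\omega|\leq n\}$. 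Since the three regions cover $\R^{4}$, $\cR_{n,\nu}$ is $(A,B)$-convex there.

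Items (i)--(v) should then be bookkeeping. For (i), $\cQ_{n,\nu}$ is smooth with compact support, so $D^{2}\cQ_{n,\nu}\in L^{\infty}$, and $D^{2}(\cP_{n}\star\varphi_{\nu})\in L^{\infty}$ by Proposition~\ref{t: N Pn conv}~(\ref{eq: patrie}); hence $D^{2}\cR_{n,\nu}\in L^{\infty}$. For (v), each of $\psi_{n}$, $\cQ\star\varphi_{\nu}$ and $\cP_{n}\star\varphi_{\nu}$ is even in every coordinate $\zeta_{1},\zeta_{2},\eta_{1},\eta_{2}$ (they are built out of $|\zeta|,|\eta|,|\omega|$, $\psi$, $f_{n}$ and a radial $\varphi$), hence so is $\cR_{n,\nu}$; evenness in $\zeta_{1}$ and in $\zeta_{2}$ forces $(\partial_{\zeta}\cR_{n,\nu})(0,\eta)=0$, and symmetrically $(\partial_{\eta}\cR_{n,\nu})(\zeta,0)=0$. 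For (iii), the same parity gives $D\cR_{n,\nu}(0)=0$, so integrating $D^{2}\cR_{n,\nu}$ along the segment $[0,\omega]$ and using (i) yields $|D\cR_{n,\nu}(\omega)|\leq\|D^{2}\cR_{n,\nu}\|_{\infty}\,|\omega|=:C(n,\nu)|\omega|$. For (ii), fix $\omega$; once $|\omega|<3n$ we have $\psi_{n}\equiv1$ near $\omega$, whence $D\cQ_{n,\nu}(\omega)=D(\cQ\star\varphi_{\nu})(\omega)$ and $D^{2}\cQ_{n,\nu}(\omega)=D^{2}(\cQ\star\varphi_{\nu})(\omega)$, while $D(\cP_{n}\star\varphi_{\nu})(\omega)\to0$ and $D^{2}(\cP_{n}\star\varphi_{\nu})(\omega)\to0$ by Proposition~\ref{t: N Pn conv}~(\ref{eq: alons}); adding gives the claimed pointwise limits.

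Finally, for (iv) I would write $D\cR_{n,\nu}=(D\psi_{n})(\cQ\star\varphi_{\nu})+\psi_{n}\,D(\cQ\star\varphi_{\nu})+C_{1}\nu^{q-2}D(\cP_{n}\star\varphi_{\nu})$ and bound the three summands separately. The last is $\leq C(\nu)(|\omega|^{p-1}+|\omega|^{q-1})$ uniformly in $n$ by Proposition~\ref{t: N Pn conv}~(\ref{eq: de}); the middle one is $\leq C(|\omega|^{p-1}+|\omega|^{q-1})$ by Lemma~\ref{l: N second order}~(\ref{eq: est Q*phi i)}) together with $0\leq\psi_{n}\leq1$; and the first is supported in $\{3n\leq|\omega|\leq4n\}$, where $|D\psi_{n}(\omega)|=n^{-1}|D\psi(\omega/n)|\leq4\|D\psi\|_{\infty}\,|\omega|^{-1}$, so by Lemma~\ref{l: N second order}~(\ref{eq: est Q*phi}) it is $\leq C|\omega|^{-1}(|\omega|^{p}+|\omega|^{q}+1)\leq C(|\omega|^{p-1}+|\omega|^{q-1})$, using $|\omega|\geq n\geq1$ and $q>1$. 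Summing gives (iv).
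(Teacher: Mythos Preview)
Your proof is correct and follows essentially the same approach as the paper's. The only cosmetic differences are that you split $\R^{4}$ as $\{|\omega|\leq2n\}\cup\{2n<|\omega|\leq4n\}\cup\{|\omega|>4n\}$ rather than the paper's $\{|\omega|<3n\}\cup\{3n\leq|\omega|\leq4n\}\cup\{|\omega|>4n\}$ (which is harmless since \eqref{eq: N annulus} and the lower bound in Proposition~\ref{t: N Pn conv}~(\ref{eq: enfants}) both cover your middle region), you give an explicit value of $C_{1}$ where the paper just says ``choose $C_{1}$ large enough'', and for item~(\ref{eq: R3}) you treat $\cR_{n,\nu}$ directly via parity and the mean value theorem whereas the paper bounds the $\cQ_{n,\nu}$ and $\cP_{n}\star\varphi_{\nu}$ pieces separately.
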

\begin{proof}
The $(A,B)$-convexity in the region $\{|\omega|< 3n\}\cup\{|\omega|>4n\}$ follows, for any $C_1>0$, from the $(A,B)$-convexity of $\cQ\star\varphi_{\nu}$ and $\cP_{n}*\varphi_{\nu}$; see Proposition~\ref{p: N gen conv reg} and the first part of Proposition~\ref{t: N Pn conv}~(\ref{eq: enfants}). In order to achieve $(A,B)$-convexity in the region $\{3n\leq|\omega|\leq 4n\}$, we choose $C_1$ large enough and combine \eqref{eq: N annulus} with the second part of Proposition~\ref{t: N Pn conv}~(\ref{eq: enfants}).

Item~(\ref{eq: R1}) follows from Proposition~\ref{t: N Pn conv} (\ref{eq: patrie}) and the fact that $\cQ_{n,\nu}\in C^{2}_c(\R^{4})$ (or from \eqref{eq: N annulus}).

Item~(\ref{eq: R2}) is a trivial consequence of Proposition~\ref{t: N Pn conv} (\ref{eq: alons}) and the definition of $\cQ_{n,\nu}$. 

From \eqref{eq: ENG-SWE} and the fact that $\psi_n\equiv1$ in a neighbourhood of $0$, we conclude that $(D\cQ_{n,\nu})(0)=0$. Hence, by the mean value theorem and the fact that $\cQ_{n,\nu}\in C_c^\infty(\R^{4})$, we get $\mod{(D\cQ_{n,\nu})(\omega)}\leq C(\nu,n)|\omega|$. Item~(\ref{eq: R3}) follows from here and Proposition~\ref{t: N Pn conv}~(\ref{eq: la}).
\medskip

Item~(\ref{eq: R4}) follows by combining Lemma~\ref{l: N second order}~(\ref{eq: est Q*phi}) and (\ref{eq: est Q*phi i)}) with Proposition~\ref{t: N Pn conv}~(\ref{eq: de}).
In particular, use the fact that $D\psi_n\equiv0$ on $\{\omega: |\omega|\not\in[3n,4n]\}$, while, by Lemma~\ref{l: N second order}~(\ref{eq: est Q*phi}), on $\{\omega:|\omega|\in[3n,4n]\}$ we have the estimate 
$$
\aligned
|(D\psi_n)(\omega)|\cdot|(\cQ\star\varphi_\nu)(\omega)|
&\leqsim \frac{C(p,\delta)}{n}\cdot \left(1+|\omega|^p+|\omega|^q\right)\\
&\leqsim C(p,\delta)\left(1+|\omega|^{p-1}+|\omega|^{q-1}\right).
\endaligned
$$
Finally, $1\leq |\omega|^{p-1}+|\omega|^{q-1}$, because $|\omega|>1$.

To prove item~(\ref{eq: R5}) just observe that $\cR_{n,\nu}$ is smooth and even in each of the variables $\zeta_{1},\zeta_{2},\eta_{1}$ and $\eta_{2}$, because both $\cQ\star\varphi_{\nu}$ and $\cP_{n}\star\varphi_{\nu}$ have this property.
\end{proof}

\section{Proof of the bilinear embedding (Theorem~\ref{t: N bil})}
\label{s: N proof bil}

As we annunced in Sections~\ref{s: Neumann gen conv}~and~\ref{s: Thm bil discussion}, to prove Theorem~\ref{t: N bil} we modify the heat-flow-Bellman method of \cite{CD-DivForm} by means of the sequence $\{\cR_{n,\nu}\}$ of Theorem~\ref{t: N approx}.

Let $\Omega\subseteq\R^{d}$ be open. Fix two closed subspaces $\oV$ and $\oV^{\prime}$ of $W^{1,2}(\Omega)$ of the type discussed in Section~\ref{s: boundary}. Instead of proving \eqref{eq: N bil} directly, it is more convenient to show that 
\begin{equation}\label{eq: N bil bis}
\Delta_{p}(A,B)>0
\hskip 5pt
\Longrightarrow 
\hskip 5pt
\int^{\infty}_{0}\!\int_{\Omega}\mod{\nabla T^{A,\oV}_{t}f(x)}\mod{\nabla T^{B,\oV^{\prime}}_{t}g(x)}\wrt x\wrt t\leqsim \left(\|f\|^{p}_{p}+\|g\|^{q}_{q}\right),
\end{equation}
for all $f,g\in (L^{p}\cap L^{q})(\Omega)$.
Once \eqref{eq: N bil bis} is proved, \eqref{eq: N bil} follows by replacing $f$ and $g$ in \eqref{eq: N bil bis} with $sf$ and $s^{-1}g$ and minimising the right-hand side with respect to $s>0$.
\medskip

We first discuss analyticity of the semigroups in \eqref{eq: N bil}. Recall the notation $q=p/(p-1)$.
\begin{lemma}\label{l: N analytic sem}
Let $p\geq 2$ and $A\in\cA(\Omega)$. Suppose that $\Delta_{p}(A)>0$. Then there exists $\theta_{0}=\theta_{0}(p)\in (0,\pi/2)$ such that $(T^{A,\oV}_{t})_{t>0}$ is analytic and contractive in $L^{r}(\Omega)$ in the cone 
$$
\bS_{\theta_{0}}=\{z\in\C\setminus\{0\}: |\arg(z)|<\theta_{0}\},
$$
for all $r\in [q,p]$.
\end{lemma}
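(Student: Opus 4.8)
The plan is to derive the $L^{r}$-statement from the $L^{2}$-theory by rotating the coefficient matrix. The algebraic input is that for $\varphi\in\R$ the form of $e^{i\varphi}A$ is $\gota_{e^{i\varphi}A,\oV}=e^{i\varphi}\gota_{A,\oV}$, so that, as long as $e^{i\varphi}A$ is elliptic (which we shall arrange), $\oL^{e^{i\varphi}A,\oV}_{2}=e^{i\varphi}\oL^{A,\oV}_{2}$ and hence
\begin{equation*}
T^{A,\oV}_{e^{i\varphi}t}=T^{e^{i\varphi}A,\oV}_{t}\qquad\text{on }L^{2}(\Omega),\ t>0,
\end{equation*}
whenever $|\varphi|<\theta_{2}$, the half-angle of $L^{2}$-analyticity. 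Thus it suffices to produce $\theta_{0}\in(0,\theta_{2})$ such that for every $\varphi$ with $|\varphi|\leq\theta_{0}$ one has $e^{i\varphi}A\in\cA(\Omega)$ and $\Delta_{r}(e^{i\varphi}A)\geq0$ for all $r\in[q,p]$: by Proposition~\ref{p: N Nittka} each $(T^{e^{i\varphi}A,\oV}_{t})_{t>0}$ then extends to a $C_{0}$-semigroup of contractions on $L^{r}(\Omega)$, and these assemble into an analytic contraction semigroup in $\bS_{\theta_{0}}$.

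To choose $\theta_{0}$, note first that Lemma~\ref{l: N prop of Deltap}~(\ref{L4 (i)}) and (\ref{L4 (iii)}) give $\Delta_{r}(A)=\Delta_{\max\{r,\,r/(r-1)\}}(A)\geq\Delta_{p}(A)>0$ for every $r\in[q,p]$, since $\max\{r,\,r/(r-1)\}\in[2,p]$. Next, since $|\cI_{r}\xi|\leq(1+|1-2/r|)|\xi|\leq2|\xi|$, the bound \eqref{eq: Neumann bounded} yields $|\Im\sk{A(x)\xi}{\cI_{r}\xi}_{\C^{d}}|\leq2\Lambda(A)|\xi|^{2}$ for a.e.\ $x$; writing
\begin{equation*}
\Re\sk{e^{i\varphi}A(x)\xi}{\cI_{r}\xi}_{\C^{d}}=\cos\varphi\,\Re\sk{A(x)\xi}{\cI_{r}\xi}_{\C^{d}}-\sin\varphi\,\Im\sk{A(x)\xi}{\cI_{r}\xi}_{\C^{d}}
\end{equation*}
and taking the minimum over $|\xi|=1$ and the essential infimum over $x\in\Omega$ we obtain
\begin{equation*}
\Delta_{r}(e^{i\varphi}A)\geq\cos\varphi\,\Delta_{p}(A)-2|\sin\varphi|\,\Lambda(A),\qquad r\in[q,p].
\end{equation*}
In particular (taking $r=2$, and recalling that $\Delta_{2}(\cdot)>0$ is ellipticity) $e^{i\varphi}A\in\cA(\Omega)$ and $\Delta_{r}(e^{i\varphi}A)\geq0$ for all $r\in[q,p]$ as soon as $\tan|\varphi|\leq\Delta_{p}(A)/(2\Lambda(A))$. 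We therefore take $\theta_{0}=\theta_{0}\bigl(p,\Delta_{p}(A),\Lambda(A)\bigr)\in(0,\theta_{2})$ with $\tan\theta_{0}\leq\Delta_{p}(A)/(2\Lambda(A))$.

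It remains to glue the contraction semigroups $(T^{e^{i\varphi}A,\oV}_{t})_{t>0}$ on $L^{r}(\Omega)$, $|\varphi|<\theta_{0}$, into an analytic semigroup on $L^{r}(\Omega)$, by setting $T^{A,\oV}_{z}:=T^{e^{i\varphi}A,\oV}_{|z|}$ for $z=|z|e^{i\varphi}\in\bS_{\theta_{0}}$. The semigroup law, strong continuity, and the uniform bound $\|T^{A,\oV}_{z}\|_{L^{r}\to L^{r}}\leq1$ follow from the consistency of all these semigroups with the $L^{2}$-analytic semigroup on the dense subspace $L^{2}(\Omega)\cap L^{r}(\Omega)$. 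For holomorphy, fix $f\in L^{2}(\Omega)\cap L^{r}(\Omega)$ and $g\in L^{2}(\Omega)\cap L^{r/(r-1)}(\Omega)$; then $z\mapsto\sk{T^{A,\oV}_{z}f}{g}$ coincides with the corresponding $L^{2}$-pairing, hence is holomorphic on $\bS_{\theta_{0}}$, and since such $f$, $g$ are dense while $z\mapsto T^{A,\oV}_{z}f$ is locally bounded, the latter map is weakly --- hence strongly --- holomorphic on $\bS_{\theta_{0}}$, first for $f$ in the dense subspace and then, by uniform contractivity, for all $f\in L^{r}(\Omega)$. (Alternatively one may appeal to the standard characterisation of bounded analytic semigroups through the $C_{0}$-semigroups generated by the rotations of the generator.) Contractivity in $\bS_{\theta_{0}}$ is then immediate. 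The one genuinely delicate point is exactly this gluing step --- passing from the ray-wise contraction semigroups given by Proposition~\ref{p: N Nittka} to analyticity in the open cone on $L^{r}(\Omega)$; everything else (the rotation identity for the form, the perturbation estimate for $\Delta_{r}$, the monotonicity of $r\mapsto\Delta_{r}(A)$, and the routine identification of $e^{i\varphi}\oL^{A,\oV}_{r}$ with the $L^{r}$-realisation of $\gota_{e^{i\varphi}A,\oV}$) is elementary.
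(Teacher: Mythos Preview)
Your proof is correct and follows essentially the same route as the paper: rotate the matrix, use the identity $T^{A,\oV}_{e^{i\varphi}t}=T^{e^{i\varphi}A,\oV}_{t}$, show $\Delta_{r}(e^{i\varphi}A)\geq0$ for $r\in[q,p]$ and small $|\varphi|$, apply Proposition~\ref{p: N Nittka} for contractivity on rays, and then invoke the standard passage from ray-wise contractivity to analyticity in a sector. The only cosmetic differences are that the paper cites Lemma~\ref{l: N prop of Deltap}~(\ref{L4 (iv)}) for the perturbation in $\varphi$ whereas you prove the explicit bound $\Delta_{r}(e^{i\varphi}A)\geq\cos\varphi\,\Delta_{p}(A)-2|\sin\varphi|\,\Lambda(A)$ directly, and the paper dispatches the gluing/analyticity step by a reference to \cite[Chapter~II, Theorem~4.6]{EN} rather than writing it out.
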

\begin{proof}
By complex interpolation it would be sufficient to prove the statement for $r=p,q$, but we prefer to avoid interpolation and prove the lemma directly for all $r$.

By Lemma~\ref{l: N prop of Deltap}~(\ref{L4 (iv)}), (\ref{L4 (i)}) and (\ref{L4 (iii)}) there exists $\theta_{0}=\theta_{0}(p)>0$ such that $\Delta_{r}(e^{i\theta} A)>0$ for all $\theta\in [-\theta_{0},\theta_{0}]$ and all $r\in [q,p]$. The contractivity now follows from Proposition~\ref{p: N Nittka} and the relation $T^{e^{i\theta}A,\oV}_{t}=T^{A,\oV}_{e^{i\theta}t}$. Finally, analyticity is a consequence of a standard argument \cite[Chapter II, Theorem 4.6]{EN}. 
\end{proof}
\begin{remark}\label{r: N analytic sem}
In the statement of Lemma~\ref{l: N analytic sem} we can take any $\theta_{0}$ with $\Delta_{p}(e^{\pm i \theta_{0}}A)>0$.
\end{remark}
For proving \eqref{eq: N bil bis} we also need the following result that should be compared with \cite[Lemma~4]{Egert2018}. Note that here the chain-rule is not a problem, because $\cR_{n,\nu}$ is smooth. 
\begin{lemma}\label{l: invariance}
Let $u\in\oV$ and $v\in\oV^{\prime}$. Then
$$
(\partial_{\zeta}\cR_{n,\nu})(u,v)\in\oV\quad \text{and}\quad (\partial_{\eta}\cR_{n,\nu})(u,v)\in\oV^{\prime},
$$
for all $n\in\N_{+}$ and $\nu>0$.
\end{lemma}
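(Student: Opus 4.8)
The plan is to verify the assertion for the two admissible types of $\oV$ from Section~\ref{s: boundary} separately, in each case reducing everything to the classical chain rule for the composition of a smooth function with a vector-valued Sobolev map. Fix $j\in\{1,2\}$ and write $u=(u_1,u_2):=(\Re u,\Im u)$ with $u_1,u_2\in\oV$, and $v=(v_1,v_2):=(\Re v,\Im v)$ with $v_1,v_2\in\oV^{\prime}$. Since $\partial_{\zeta}=(\partial_{\zeta_1}-i\partial_{\zeta_2})/2$, it suffices to show that the two real functions $(\partial_{\zeta_j}\cR_{n,\nu})(u_1,u_2,v_1,v_2)$ lie in $\oV$; the claim for $\partial_{\eta}\cR_{n,\nu}$ then follows by exchanging the roles of the two pairs of variables and of $\oV,\oV^{\prime}$. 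Two facts established in Theorem~\ref{t: N approx} will be used repeatedly: $\cR_{n,\nu}$ is smooth on $\R^{4}$ with $D^{2}\cR_{n,\nu}\in L^{\infty}(\R^{4})$ by item~(\ref{eq: R1}), so each first-order partial derivative $\partial_{k}\cR_{n,\nu}$ is globally Lipschitz; and by item~(\ref{eq: R5}) the functions $\partial_{\zeta_1}\cR_{n,\nu}$ and $\partial_{\zeta_2}\cR_{n,\nu}$ vanish identically on $\{\zeta=0\}$ (in particular at the origin).

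If $\oV=W^{1,2}(\Omega)$ there is nothing delicate. With $\Phi:=\partial_{\zeta_j}\cR_{n,\nu}$, the map $(u_1,u_2,v_1,v_2)$ belongs to $W^{1,2}(\Omega;\R^{4})$ and $\Phi$ is Lipschitz with $\Phi(0)=0$, so the chain rule for Sobolev functions \cite{Leonichain, AmbMaschain} yields $\Phi(u_1,u_2,v_1,v_2)\in W^{1,2}(\Omega)=\oV$; the bound $|\Phi(u_1,u_2,v_1,v_2)|\leq L\,(|u_1|+|u_2|+|v_1|+|v_2|)$ together with $\|D^{2}\cR_{n,\nu}\|_{\infty}<\infty$ takes care of the integrability of the function and of its gradient. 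Since $\cR_{n,\nu}$ is in fact smooth, $\Phi$ is $C^{1}$ and this step is entirely elementary.

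If $\oV=W^{1,2}_{D}(\Omega)$ one argues by approximation. By the very definition of $W^{1,2}_{D}(\Omega)$ there are $a_{m},b_{m}\in C^{\infty}_{c}(\R^{d}\setminus D)$ with $a_{m}|_{\Omega}\to u_1$ and $b_{m}|_{\Omega}\to u_2$ in $W^{1,2}(\Omega)$; when $\oV^{\prime}$ is also of the form $W^{1,2}_{D^{\prime}}(\Omega)$ one further picks $c_{m},d_{m}\in C^{\infty}_{c}(\R^{d}\setminus D^{\prime})$ approximating $v_1,v_2$, and otherwise one keeps $v_1,v_2$. Put $w_{m}:=\Phi(a_{m},b_{m},c_{m},d_{m})$, respectively $w_{m}:=\Phi(a_{m},b_{m},v_1,v_2)$. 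Since $a_{m},b_{m}$ have compact support disjoint from $D$, they vanish on an open neighbourhood of $D$ and off some compact $K_{m}$, and because $\Phi$ vanishes on $\{\zeta=0\}$ the same is true of $w_{m}$; hence $w_{m}|_{\Omega}\in W^{1,2}_{D}(\Omega)$ — immediately in the first case, where $w_{m}$ is literally the restriction of an element of $C^{\infty}_{c}(\R^{d}\setminus D)$, and via the locality of $W^{1,2}_{D}(\Omega)$ near $D$ in general (cf.\ \cite[Lemma~4]{Egert2018}). One then lets $m\to\infty$: the inner maps converge in $W^{1,2}(\Omega;\R^{4})$ and $\Phi$ is Lipschitz, so $w_{m}\to\Phi(u_1,u_2,v_1,v_2)$ in $L^{2}(\Omega)$, while for the gradients one expands $\nabla w_{m}$ by the chain rule, subtracts the analogous expansion with $(u_1,u_2,v_1,v_2)$ inside, and estimates the difference by splitting it into a term dominated by $\|D^{2}\cR_{n,\nu}\|_{\infty}$ times the $W^{1,2}(\Omega)$-distance of the two inner maps, plus a term handled by dominated convergence along an a.e.\ convergent subsequence (using that the second-order partials of $\cR_{n,\nu}$ are bounded and continuous). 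As $W^{1,2}_{D}(\Omega)$ is closed, the limit lies in $\oV$.

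I expect the only genuinely non-routine point to be the verification that $w_{m}\in W^{1,2}_{D}(\Omega)$ when $\oV^{\prime}$ is the full space $W^{1,2}(\Omega)$: there $w_{m}$ is only smooth on $\Omega$ (not up to all of $\partial\Omega$), and one must exploit that it is supported away from $D$ and reduce to the localization property of $W^{1,2}_{D}(\Omega)$, exactly as in \cite[Lemma~4]{Egert2018}. Everything else — the chain rule and its stability under $W^{1,2}$-convergence of the inner map — is standard precisely because $\cR_{n,\nu}$ is smooth with the derivative bounds of Theorem~\ref{t: N approx}, which is the whole reason for the construction carried out in Sections~\ref{s: the sequence}~and~\ref{s: conv}.
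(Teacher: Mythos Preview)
Your approach is essentially the paper's: approximate $u$ and $v$ by the smooth compactly supported functions defining $W^{1,2}_{D}(\Omega)$ and $W^{1,2}_{D'}(\Omega)$, use Theorem~\ref{t: N approx}~(\ref{eq: R5}) to locate the support of the approximants $\varphi_\ell:=(\partial_{\zeta}\cR_{n,\nu})(u_\ell,v_\ell)$ inside $\R^{d}\setminus D$, use Theorem~\ref{t: N approx}~(\ref{eq: R1}) (global Lipschitz bound on first-order partials) to get $\varphi_\ell\to\varphi$ in $L^{2}(\Omega)$, and pass to the limit. The one genuine difference is in the last step. The paper does \emph{not} prove strong $W^{1,2}$-convergence of $\varphi_\ell$: it merely observes, again from Theorem~\ref{t: N approx}~(\ref{eq: R1}) and the chain rule, that $(\varphi_\ell)$ is bounded in $W^{1,2}_{D}(\Omega)$, then extracts a weakly convergent subsequence; since $W^{1,2}_{D}(\Omega)$ is closed (hence weakly closed) and the weak limit must agree with the $L^{2}$-limit $\varphi$, one concludes $\varphi\in W^{1,2}_{D}(\Omega)$ in one line. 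Your strong-convergence argument (expand $\nabla\varphi_\ell$ by the chain rule, pass to a subsequence, use dominated convergence for the second-order coefficients and $L^{2}$-convergence of the inner gradients) is correct but longer; the weak-compactness trick is the cleaner route and avoids the subsequence bookkeeping.

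On the mixed case $\oV=W^{1,2}_{D}(\Omega)$, $\oV^{\prime}=W^{1,2}(\Omega)$: you are right to flag it, and indeed the paper only writes out the $D,D'$ case, declaring the remaining ones ``simpler''. Your proposal to keep $v$ unapproximated and invoke a locality property of $W^{1,2}_{D}(\Omega)$ is reasonable, but note that for completely arbitrary $\Omega$ it is \emph{not} automatic that a $W^{1,2}(\Omega)$-function supported in $K\cap\overline\Omega$ with $K\Subset\R^{d}\setminus D$ lies in $W^{1,2}_{D}(\Omega)$; this is exactly the sort of statement one has to borrow from \cite{Egert2018}. If you want to stay self-contained, the safer move is to approximate $v$ as well (by Meyers--Serrin in $W^{1,2}(\Omega)$), so that the argument runs in parallel with the $D,D'$ case and only the $\oV^{\prime}$-membership becomes trivially the chain rule.
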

\begin{proof}
We prove the lemma in the case when $\oV=W^{1,2}_{D}(\Omega)$ and $\oV^{\prime}=W^{1,2}_{D^{\prime}}(\Omega)$, for two closed subsets $D,D^{\prime}\subseteq\partial\Omega$. The other cases are simpler and will not be written down here. 

Define $\varphi:=(\partial_{\zeta}\cR_{n,\nu})(u,v)$ and $\psi:=(\partial_{\eta}\cR_{n,\nu})(u,v)$. Let $u_{\ell}\in C^{\infty}_{c}(\R^{d}\setminus D)$ and $v_{\ell}\in C^{\infty}_{c}(\R^{d}\setminus D^{\prime})$ be such that ${u_{\ell}}_{\vert_{\Omega}}\rightarrow u$ and ${v_{\ell}}_{\vert_{\Omega}}\rightarrow v$ in $W^{1,2}(\Omega)$ as $\ell\rightarrow \infty$. Set $\varphi_{\ell}:=(\partial_{\zeta}\cR_{n,\nu})(u_{\ell},v_{\ell})$ and $\psi_{\ell}:=(\partial_{\eta}\cR_{n,\nu})(u_{\ell},v_{\ell})$. By Theorem~\ref{t: N approx}~(\ref{eq: R5}) we have ${\rm supp}(\varphi_{\ell})\subseteq  \R^{d}\setminus D$ and ${\rm supp}(\psi_{\ell})
\subseteq \R^{d}\setminus D^{\prime}$ so, since $\cR_{n,\nu}$ is smooth, we have $\varphi_{\ell}\in C^{\infty}_{c}(\Omega\setminus D)$ and $\psi_{\ell}\in C^{\infty}_{c}(\Omega\setminus D^{\prime})$. To conclude the proof we now proceed much as in \cite[Lemma~4]{Egert2018}, but with the simplification that here we can use the chain-rule for the composition of smooth functions. It follows from Theorem~\ref{t: N approx}~(\ref{eq: R1}) and the mean value theorem that
$$
\norm{\varphi-\varphi_{\ell}}{2}+\norm{\psi-\psi_{\ell}}{2}\leq C(n,\nu)\left(\norm{u-u_{\ell}}{2}+\norm{v-v_{\ell}}{2}\right).
$$
Therefore $\varphi_{\ell}\rightarrow \varphi$ and $\psi_{\ell}\rightarrow\psi$ in $L^{2}(\Omega)$. Also, by the chain-rule and Theorem~\ref{t: N approx}~(\ref{eq: R1}), the sequence $(\varphi_{\ell})_{\ell\in\N_{+}}$ is bounded in $W^{1,2}_{D}(\Omega)$ and the sequence $(\psi_{\ell})_{\ell\in\N_{+}}$ is bounded in $W^{1,2}_{D^{\prime}}(\Omega)$. Hence they admit two subsequences weakly convergent in $W^{1,2}_{D}(\Omega)$ and $W^{1,2}_{D^{\prime}}(\Omega)$, respectively. It follows that $\varphi\in W^{1,2}_{D}(\Omega)$ and $\psi\in W^{1,2}_{D^{\prime}}(\Omega)$.
\end{proof}
\subsection{Proof of \eqref{eq: N bil bis}} 
By Lemma~\ref{l: N prop of Deltap}~(\ref{L4 (i)}), we have $\Delta_{p}(A,B)=\Delta_{q}(A,B)$, so it suffices to prove \eqref{eq: N bil bis} when $p\geq2$. Fix $p\geq2$ and $A,B\in\cA(\Omega)$ such that $\Delta_{p}(A,B)>0$.

Let $\cQ=\cQ_{\delta}$ as in \eqref{eq: N Bellman}. Fix $\delta>0$ such that Theorem~\ref{t: N B gen conv} holds true. Let $\{\cP_{n}\}_{\in\N_{+}}$ be the sequence of Section~\ref{s: def Pn}. For $n\in\N_{+}$ and $\nu\in (0,1]$, define $\cR_{n,\nu}$ by means of \eqref{eq: d Rnv} and fix  $C_{1}>0$ not depending on $\nu$ such that Theorem~\ref{t: N approx} holds true.

\medskip
We now start the heat-flow method of Section~\ref{s: N heat-flow}, but for simplicity we omit the subscript $\cW$. Fix $f,g\in (L^{p}\cap L^{q})(\Omega)$. Define
$$
\cE(t)=\int_{\Omega}\cQ\left(T^{A,\oV}_{t}f,T^{B,\oV^{\prime}}_{t}g\right),\quad t>0.
$$
The estimates \eqref{eq: N 5} and the analyticity of $(T^{A,\oV}_{t})_{t>0}$ and $(T^{B,\oV^{\prime}}_{t})_{t>0}$ (see Lemma~\ref{l: N analytic sem}) imply that $\cE$ is well defined, continuous on $[0,\infty)$, differentiable on $(0,\infty)$ with a continuous derivative and
$$
-\cE^{\prime}(t)=2\Re\int_{\Omega}\left(\partial_{\zeta}\cQ\left(T^{A,\oV}_{t}f,T^{B,\oV^{\prime}}_{t}g\right)\oL^{A}T^{A,\oV}_{t}f+\partial_{\eta}\cQ\left(T^{A,\oV}_{t}f,T^{B,\oV^{\prime}}_{t}g\right)\oL^{B}T^{B,\oV^{\prime}}_{t}g\right).
$$
Integrating in the variable $t$ from $0$ to $+\infty$ both sides of the equality above, using the first estimate in \eqref{eq: N 5} and the fact that, by analyticity, $T^{A,\oV}_{t}f\in\Dom(\oL^{A}_{p})\cap\Dom(\oL^{A}_{q})$ and $T^{B,\oV^{\prime}}_{t}g\in \Dom(\oL^{B}_{p})\cap\Dom(\oL^{B}_{q})$, we deduce that for proving \eqref{eq: N bil bis} it suffices to show that
\begin{equation}
\label{eq: N by parts}
\int_{\Omega}\mod{\nabla u}\mod{\nabla v}\leqsim 2\Re\int_{\Omega}\left(\partial_{\zeta}\cQ\left(u,v\right)\oL^{A}u+\partial_{\eta}\cQ\left(u,v\right)\oL^{B}v\right),
\end{equation}
for all $u\in\Dom(\oL^{A}_{p})\cap\Dom(\oL^{A}_{q})$ and all $v\in\Dom(\oL^{B}_{p})\cap\Dom(\oL^{B}_{q})$.

Note that 
$$
\Dom(\oL^{A}_{p})\cap\Dom(\oL^{A}_{q})\subset \Dom(\oL^{A}_{2})\subset \oV
$$
and 
$$
\Dom(\oL^{B}_{p})\cap\Dom(\oL^{B}_{q})\subset \Dom(\oL^{B}_{2})\subset \oV^{\prime}.
$$
Therefore for proving \eqref{eq: N by parts} it suffices to assume that $u\in\oV$, $v\in \oV^{\prime}$ and  $u,v,\oL^{A}u, \oL^{B}v\in L^{p}(\Omega)\cap L^{q}(\Omega)$. By using Theorem~\ref{t: N approx}~(\ref{eq: R2})~and~(\ref{eq: R4}), Lemma~\ref{l: N second order}~(\ref{eq: est Q*phi i)}), the fact that $\cQ\in C^{1}(\R^{4})$ and Lebesgue's dominated convergence theorem twice, we deduce that

\begin{equation}\label{eq: N final}
\aligned
&2\Re\int_\Omega\left( (\partial_\zeta \cQ)(u,v)\oL^{A}u+ (\partial_\eta \cQ)(u,v)\oL^{B} v\right)\\
&=\lim_{\nu\rightarrow 0_{+}}\lim_{n\rightarrow+\infty}2\Re\int_\Omega \left((\partial_{\zeta}\cR_{n,\nu})(u,v)\oL^{A} u+(\partial_{\eta}\cR_{n,\nu})(u,v)\oL^{B}v\right).
\endaligned
\end{equation}
By Lemma~\ref{l: invariance} we have 
$(\partial_{\zeta}\cR_{n,\nu})(u,v)\in\oV$ and $(\partial_{\eta}\cR_{n,\nu})(u,v)\in\oV^{\prime}$. Hence we can integrate by parts the integral on the right-hand side of \eqref{eq: N final} and, by means of the chain-rule for the composition of smooth functions with vector-valued Sobolev functions, deduce that
\begin{equation}\label{eq: N final bis}
\aligned
&2\Re\int_\Omega \left((\partial_{\zeta}\cR_{n,\nu})(u,v)\oL^{A}u+(\partial_{\eta}\cR_{n,\nu})(u,v)\oL^{B}v\right)\\
&=\int_{\Omega}H^{(A,B)}_{\cR_{n,\nu}}\left[\cW_{2,1}\left(u,v\right);\cW_{2,d}\left(\nabla u,\nabla v\right)\right].
\endaligned
\end{equation}
By Theorem~\ref{t: N approx}, the function $\cR_{n,\nu}$ is $(A,B)$-convex in $\R^{4}$, so the integrand on the right-hand side of \eqref{eq: N final bis} is nonnegative for all $n\in\N_{+}$. Hence, by  Fatou's lemma, Theorem~\ref{t: N approx} (\ref{eq: R2}) and Proposition~\ref{p: N gen conv reg},
$$
\aligned
\lim_{n\rightarrow+\infty}2\Re&\int_\Omega \left((\partial_{\zeta}\cR_{n,\nu})(u,v)\oL^{A}u+(\partial_{\eta}\cR_{n,\nu})(u,v)\oL^{B}v\right)\\
&\geq \int_{\Omega}H^{(A,B)}_{\cQ\star\varphi_{\nu}}\left[\cW_{2,1}\left(u,v\right);\cW_{2,d}\left(\nabla u,\nabla v\right)\right]\\
&\geq C_{0}\int_{\Omega}|\nabla u||\nabla v|,
\endaligned
$$
for all $\nu\in (0,1)$, where $C_{0}=C_{0}(p,\Delta_{p}(A,B),\lambda(A,B),\Lambda(A,B))>0$ does not depend on $\nu$. The desired inequality \eqref{eq: N by parts} now follows from \eqref{eq: N final}.

\subsection{Remark on the special case $\Omega=\R^{d}$}\label{s: N bil RN}
In this section we simplify the proof of \cite[Theorem~1.1]{CD-DivForm} by means of elliptic regularity theory \cite{AgDoNi} and a reduction argument in the spirit of \cite[Section~7]{CD-OU}. 
\smallskip

Let  $p>1$ and $q=p/(p-1)$. By the regularisation trick explained in \cite[Lemma~A4 and Lemma~A5]{CD-DivForm}, we may assume that $A,B\in C^{\infty}(\R^{d};\C^{d\times d})$ with bounded derivatives. In this case, by elliptic regularity \cite{AgDoNi} the semigroups $(T^{A}_{t})_{t>0}$ and $(T^{B}_{t})_{t>0}$ are analytic in $L^{r}(\R^{d})$ and $\Dom(\oL^{A}_{r})=\Dom(\oL^{B}_{r})=W^{2,r}(\R^{d})$, for all $r\in (1,\infty)$; see \cite[Theorem~3.1.1 and Theorem~3.2.2]{Lunardib1}, \cite[Section~6]{KuW},   and \cite[Chapter~7]{pazy}.

Fix $f,g\in(L^{p}\cap L^{q})(\R^{d})$ and start the heat-flow method as in Section~\ref{s: N proof bil}. Since $T^{A}_{t}f,T^{B}_{t}g\in (W^{2,p}\cap W^{2,q})(\R^{d})$ for all $t>0$, for proving \eqref{eq: N bil RN} it suffices to show that
\begin{equation}\label{eq: N bil bis bis}
\int_{\R^{d}}\mod{\nabla u}\mod{\nabla v}\leqsim 2\Re\int_{\R^{d}}\left((\partial_{\zeta}\cQ)\left(u,v\right)\oL^{A}_{p}u+(\partial_{\eta}\cQ)\left(u,v\right)\oL^{B}_{q}v\right),
\end{equation}
for all  $u,v\in(W^{2,p}\cap W^{2,q})(\R^{d})$.

Take $u_{n},v_{n}\in C^{\infty}_{c}(\R^{d})$ such that $u_{n}\rightarrow u$ in $W^{2,p}(\R^{d})=\Dom(\oL^{A}_{p})$ and $v_{n}\rightarrow v$ in $W^{2,q}(\R^{d})=\Dom(\oL^{B}_{q})$. By \eqref{eq: N 5} the sequence $(\partial_{\zeta}\cQ)(u_{n},v_{n})$ is bounded in $L^{q}(\R^{d})$ and the sequence $(\partial_{\eta}\cQ)(u_{n},v_{n})$ is bounded in $L^{p}(\R^{n})$. By passing to  subsequences, we may assume that $u_{n}\rightarrow u$ and $v_{n}\rightarrow v$ almost everywhere in $\R^{d}$, so that $(\partial_{\zeta}\cQ)(u_{n},v_{n})\rightarrow(\partial_{\zeta}\cQ)(u,v)$ weakly in $L^{q}(\R^{d})$ and $(\partial_{\eta}\cQ)(u_{n},v_{n})\rightarrow(\partial_{\eta}\cQ)(u,v)$ weakly in $L^{p}(\R^{d})$. It follows that it suffices to prove \eqref{eq: N bil bis bis} for all $u,v\in C^{\infty}_{c}(\R^{d})$ (alternatively, one can arrive at the very same conclusion by using the analogue of \cite[Lemma~29]{CD-OU} which is obtained by replacing $|G|+1$ with $|G|+\gamma$, $\gamma(x)=e^{-|x|^{2}}$). 

Fix $u,v\in C^{\infty}_{c}(\R^{d})$. Recall that we are assuming that $A,B$ are smooth. By Lemma~\ref{l: N second order}~(\ref{eq: est Q*phi i)}) and Lebesgue convergence theorem,
$$
\aligned
2\Re\int_{\R^{d}}&\left(\partial_\zeta \cQ(u,v)\oL^{A}u+ \partial_\eta \cQ(u,v)\oL^{B} v\right)\\
&=\lim_{\nu\rightarrow 0_{+}}2\Re\int_{\R^{d}} \left(-\partial_{\zeta}(\cQ\star\varphi_{\nu})(u,v)\div(A\nabla u)-\partial_{\eta}(\cQ\star\varphi_{\nu})(u,v)\div(B\nabla v)\right),
\endaligned
$$
A standard integration by parts and Proposition~\ref{p: N gen conv reg} now give
$$
\aligned
2\Re\int_{\R^{d}}\Big(\partial_\zeta \cQ(u,v)  & \oL^{A}u+ \partial_\eta \cQ(u,v)\oL^{B} v\Big)\\
&\geq \liminf_{\nu\rightarrow 0_{+}} \int_{\R^{d}}H^{(A,B)}_{\cQ\star\varphi_{\nu}}\left[\cW_{2,1}\left(u,v\right);\cW_{2,d}\left(\nabla u,\nabla v\right)\right]\\
&\geqsim \int_{\R^{d}}\mod{\nabla u}\mod{\nabla v}
\endaligned
$$
as required for finishing the proof.
\section{Maximal regularity and functional calculus: proof of Theorem~\ref{t: N principal}}\label{s: max funct}
The interested reader should consult the monographs \cite{KuW,DHMP} and \cite{Haase} for a detailed discussion on the maximal regularity problem for generators of analytic semigroups on Banach spaces; below we shortly describe the problem and recall the principal results we need for proving Theorem~\ref{t: N principal}.

\subsection{Maximal regularity}Let $\sX$ be a complex Banach space and $-\oA$ the generator of a strongly continuous semigroup on $\sX$. Let $\tau>0$ and $r\in (1,+\infty)$. 

We say that  $\oA$ has {\it maximal $L^{r}$-regularity in $(0,\tau)$} if for every $v\in L^{r}(0,\tau;{\mathfrak X})$ the unique {\it mild solution}
\begin{equation}\label{eq: N mild sol}
u(t):=\int^{t}_{0}e^{-(t-s)\oA}v(s)\wrt s,\quad (0<t<\tau)
\end{equation}
to the Cauchy problem $u^{\prime}(t)+\oA u(t)=v(t)$, $u(0)=0$ belongs to $W^{1,r}(0,\tau;\sX)\cap L^{r}(0,\tau;\Dom(\oA))$. This property does not depend on $\tau>0$ and $r>1$ \cite[Theorem~2.5]{Dore}. We say that $\oA$ has {\it (parabolic) maximal regularity} if for some, equivalently all, $r>1$ and $\tau>0$ the operator $\oA$ has maximal $L^{r}$-regularity in $(0,\tau)$. It follows from the very definition that $\oA$ has maximal regularity if and only if $\sigma I+\oA$ has maximal regularity for all $\sigma\geq 0$; see \cite[p. 29]{Dore}. Also, if $\oA$ has maximal regularity, then there exists $\sigma\geq 0$ such that $\exp(-t(\sigma+\oA))_{t>0}$ is bounded and analytic in $\sX$; see \cite[Theorem~2.2]{Dore}. 
\medskip

Suppose that $-\oA$ is the generator of a bounded analytic semigroup on a reflexive Banach space $\sX$, that is to say, assume that $\oA$ is {\it sectorial} with {\it sectoriality angle} $\omega(\oA)<\pi/2$ \cite{CDMY}. Denote respectively by ${\rm N}(\oA)$ and ${\rm R}(\oA)$ the nullspace and the range of $\oA$. By \cite[Theorem~3.8]{CDMY}, we have 
\begin{equation}\label{eq: CDMY}
\sX={\rm N}(\oA)\oplus\overline{{\rm R}}(\oA),
\end{equation}
where the sum is direct. 
\smallskip

As a consequence, we can always factor off the nullspace of $\oA$ and study maximal regularity for $v\in L^{r}(0,\tau;\sX)$ such that $v(s)\in \ovR(\oA)$, for a.e. $s\in (0,\tau)$. 
\subsection{Functional calculus}
Consider a reflexive complex Banach space $\sX$ and the generator $-\oA$ of a bounded analytic semigroup on $\sX$. By \eqref{eq: CDMY}, the restriction $\oA_{\vert\vert}$ of $\oA$ to $\overline{{\rm R}}(\oA)$ is a densely defined one-to-one sectorial operator with dense range on the Banach space $\overline{{\rm R}}(\oA)$, with sectoriality angle
 $\omega(\oA)<\pi/2$  and the functional calculus introduced in \cite{CDMY} is applicable to it. In particular, for every $\theta\in (\omega(\oA),\pi)$ and every bounded and holomorphic function $m$ in the cone $\bS_{\theta}=\{z\in\C\setminus\{0\}: |\arg(z)|<\theta|\}$ (in short, for every $m\in H^{\infty}(\bS_{\theta})$) we may define the closed densely defined, but possibly unbounded, linear operator $m\left(\oA_{\vert\vert}\right)$. We refer the interested reader to \cite{Mc, CDMY, Haase} for an exhaustive treatment of this subject.
 
Let $\theta\in (\omega(\oA),\pi)$. We say that $\oA$ admits a {\it bounded $H^{\infty}(\bS_{\theta})$-calculus} if $m(\oA_{\vert\vert})$ is bounded on $\ovR(\oA)$ whenever $m\in H^{\infty}(\bS_{\theta})$. We say that $\oA$ has a {\it bounded $H^{\infty}$-calculus} if it has a bounded $H^{\infty}(\bS_{\theta})$-calculus for some $\theta>\omega(\oA)$. The functional calculus angle $\omega_{H^{\infty}}(\oA)$ is, by definition, the infimum over all angles $\theta>0$ such that $\oA$ has a bounded $H^{\infty}(\bS_{\theta})$-calculus (with the convention that $\omega_{H^{\infty}}(\oA)=+\infty$ if $\oA$ does not have a bounded $H^{\infty}$-calculus). 

It is an interesting and widely studied problem whether a sectorial operator on a Banach space has a bounded $H^{\infty}$-calculus, and it is of interest to explicitly determine or estimate the functional calculus angle of the operator; see \cite{cowling,Mc,McY,CDMY,KW,KuW,Kalton}, \cite{CD-mult,CD-OU} and the references contained therein. 

In the special case when $\sX=\sH$ is a Hilbert space,
by a universal result of McIntosh~\cite{Mc} we always have $\omega_{H^{\infty}}(\oA)=\omega(\oA)$. The norm of $m(\oA)$, $m\in H^{\infty}(\bS_{\theta})$, $\theta>\omega(\oA)$, may depend on $\theta$, the space $\sH$ and the operator $\oA$. However, by a universal result of Crouzeix and Delyon \cite{CrDe}, it is always bounded above by $(2+2/\sqrt{3})\norm{m}{\infty}$ whenever $m\in H^{\infty}(\bS_{\theta})$ and $\theta>\sup\left\{\arg(\sk{\oA u}{u}): u\in \Dom(\oA)\right\}$ (the numerical range angle of $\oA$).
\medskip

One reason for studying the boundedness of $H^{\infty}$-calculus for sectorial operators on Banach spaces is its close tie with the maximal regularity problem. 
\medskip

Recall Lemma~\ref{l: N analytic sem}. In the context of the present paper, by either using the Dore-Venni theorem \cite{DoreVenni} in the refined form of Pr\"uss and Sohr \cite{PrussSohr} (see also \cite{GigaSohr}), or the characterisation of maximal regularity by Weis \cite{Weis1} together with the theory developed by Kalton and Weis in \cite{KW}, we obtain the following result.
\begin{proposition}\label{p: N D-V bis}
Let $W^{1,2}_{0}(\Omega)\subseteq \oV\subseteq W^{1,2}(\Omega)$ as in Section~\ref{s: boundary}. Suppose that $A\in\cA(\Omega)$, $p>1$ and $\Delta_{p}(A)>0$. Let $\oL^{A}_{p}$ denote the negative generator of $(T^{A,\oV}_{t})_{t>0}$ in $L^{p}(\Omega)$.
If $\omega_{H^{\infty}}(\oL^{A}_{p})<\pi/2$, then $\oL^{A}_{p}$ has parabolic maximal regularity.
 \end{proposition}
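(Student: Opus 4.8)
The plan is to derive the statement from the abstract link between the boundedness of the $H^{\infty}$-functional calculus and parabolic maximal regularity on UMD spaces, exactly as announced before the statement. \textbf{Step 1 (semigroup setting).} First I would record that $-\oL^{A}_{p}$ generates a bounded analytic semigroup on $L^{p}(\Omega)$, so that $\oL^{A}_{p}$ is sectorial with sectoriality angle $<\pi/2$. For $p\geq2$ this is Lemma~\ref{l: N analytic sem} with $r=p$ together with Proposition~\ref{p: N Nittka}. For $1<p<2$, put $q=p/(p-1)\geq2$; by Lemma~\ref{l: N prop of Deltap}~(\ref{L4 (i)}) and (\ref{L4 (ii)}) we have $\Delta_{q}(A^{*})=\Delta_{p}(A^{*})>0$, so Lemma~\ref{l: N analytic sem} gives a bounded analytic contraction semigroup generated by $-\oL^{A^{*}}_{q}$ on $L^{q}(\Omega)$; dualising (recall $T^{A^{*},\oV}_{t}=(T^{A,\oV}_{t})^{*}$ and $L^{p}(\Omega)^{*}=L^{q}(\Omega)$) yields the claim on $L^{p}(\Omega)$. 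Since $1<p<\infty$, $L^{p}(\Omega)$ is reflexive, so the decomposition $L^{p}(\Omega)={\rm N}(\oL^{A}_{p})\oplus\overline{{\rm R}}(\oL^{A}_{p})$ of \eqref{eq: CDMY} is available.

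\textbf{Step 2 (reduction and imaginary powers).} By the shift-invariance of maximal regularity recalled above, it is enough to prove that $\oB:=I+\oL^{A}_{p}$ has parabolic maximal regularity; $\oB$ is invertible and sectorial of angle $<\pi/2$. On ${\rm N}(\oL^{A}_{p})$ the operator $\oB$ acts as $I$ (trivial bounded $H^{\infty}$-calculus), while on $\overline{{\rm R}}(\oL^{A}_{p})$ the restriction of $\oL^{A}_{p}$ admits a bounded $H^{\infty}(\bS_{\theta})$-calculus for some $\theta<\pi/2$ by the hypothesis $\omega_{H^{\infty}}(\oL^{A}_{p})<\pi/2$; invoking the stability of the bounded $H^{\infty}$-calculus under addition of a positive multiple of the identity \cite{Haase} together with the splitting \eqref{eq: CDMY}, one gets $\omega_{H^{\infty}}(\oB)<\pi/2$. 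Fix $\theta$ with $\omega_{H^{\infty}}(\oB)<\theta<\pi/2$. Since for every $s\in\R$ the map $z\mapsto z^{is}$ belongs to $H^{\infty}(\bS_{\theta'})$ for each $\theta'\in(0,\pi)$, it follows that $\oB$ has bounded imaginary powers with $\|\oB^{is}\|\leq C_{\theta}\,e^{\theta|s|}$; in particular the power angle of $\oB$ is $<\pi/2$.

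\textbf{Step 3 (conclusion and main obstacle).} As $L^{p}(\Omega)$ with $1<p<\infty$ is a UMD space (equivalently, a Banach space of class $\mathcal{HT}$) — a fact that holds for every open $\Omega\subseteq\R^{d}$ with no regularity assumption on $\partial\Omega$ — the Dore--Venni theorem in the refined form of Pr\"uss and Sohr \cite{DoreVenni,PrussSohr} (see also \cite{GigaSohr}) applies to $\oB$ and gives maximal $L^{r}$-regularity of $\oB$ in $(0,\tau)$ for one, hence all, $r>1$ and $\tau>0$. By shift-invariance, $\oL^{A}_{p}$ has parabolic maximal regularity. Alternatively, one may avoid imaginary powers: by the Kalton--Weis theory \cite{KW} a sectorial operator on an $L^{p}$-space with a bounded $H^{\infty}$-calculus of angle $<\pi/2$ is $R$-sectorial of angle $<\pi/2$, whence Weis's characterisation \cite{Weis1} yields maximal regularity on the UMD space $L^{p}(\Omega)$. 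The only delicate bookkeeping is the treatment of the possibly nontrivial nullspace of $\oL^{A}_{p}$, which is removed through \eqref{eq: CDMY} and the shift from $\oL^{A}_{p}$ to $I+\oL^{A}_{p}$; all remaining ingredients are the analyticity and contractivity from Lemma~\ref{l: N analytic sem} and Proposition~\ref{p: N Nittka}, so no further estimate on $\oL^{A}$ itself is needed.
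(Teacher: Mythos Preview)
Your proposal is correct and follows exactly the route the paper indicates just before the statement: invoke the Dore--Venni theorem in the Pr\"uss--Sohr form (or, alternatively, the Kalton--Weis/Weis $R$-sectoriality approach) on the UMD space $L^{p}(\Omega)$, using that the hypothesis $\omega_{H^{\infty}}(\oL^{A}_{p})<\pi/2$ yields bounded imaginary powers of angle $<\pi/2$. One minor simplification: for $1<p<2$ you need not pass through $A^{*}$ and duality, since Lemma~\ref{l: N analytic sem} already gives analyticity and contractivity of $(T^{A,\oV}_{t})_{t>0}$ on $L^{r}(\Omega)$ for every $r\in[q,p]$ once $\Delta_{q}(A)=\Delta_{p}(A)>0$ with $q=p/(p-1)\geq2$.
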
 
\subsection{Proof of Theorem~\ref{t: N principal}}
Let $p>1$ and $A\in\cA(\Omega)$. In light of Proposition~\ref{p: N D-V bis} it suffices to show that 
$$
\Delta_{p}(A)>0\implies \omega_{H^{\infty}}(\oL^{A}_{p})<\pi/2.
$$ 
By Lemma~\ref{l: N prop of Deltap}~(\ref{L4 (i)}), \ref{L4 (ii)}) and a standard duality argument we may assume that $p\geq 2$.

By Lemma~\ref{l: N prop of Deltap}~(\ref{L4 (ii)}), (\ref{L4 (iv)}) there exists $\theta\in (0,\pi/2)$ such that $\Delta_{p}\left(e^{\pm i\theta}A,e^{\mp i\theta}A^{*}\right)>0$. Then, by Remark~\ref{r: N analytic sem}, for every $r\in [q,p]$ both $(T^{A,\oV}_{t})_{t>0}$ and $(T^{A^{*}\oV}_{t})_{t>0}$ are analytic (and contractive) in $L^{r}(\Omega)$ in the cone $\bS_{\theta}$.
\smallskip

Moreover, by Theorem~\ref{t: N bil} there exists $C>0$ such that   
\begin{equation}\label{eq: bilineq*}
\int^{\infty}_{0}\!\int_{\Omega}\mod{\nabla T^{A,\oV}_{te^{\pm i\theta}}f(x)}\mod{\nabla T^{A^{*},\oV}_{te^{\mp i\theta}}g(x)}\wrt x\wrt t\leq C \norm{f}{p}\norm{g}{q},
\end{equation}
for all $f,g\in (L^{p}\cap L^{q})(\Omega)$.

It follows from \eqref{eq: bilineq*} and the inequality
$$
\mod{\int_{\Omega}\oL^{A}_{2}T^{A,\oV}_{te^{\pm i\theta}}f\,\overline{T^{A^{*},\oV}_{te^{\mp i\theta}}g }\wrt x}\leq \Lambda(A)\int_{\Omega}\mod{\nabla T^{A,\oV}_{te^{\pm i\theta}}f(x)}\mod{\nabla T^{A^{*},\oV}_{te^{\mp i\theta}}g(x)}\wrt x
$$
that
$$
\int^{\infty}_{0}\!\mod{\int_{\Omega}\oL^{A}_{p}T^{A,\oV}_{2te^{\pm i\theta}}f\, \overline{g }\wrt x}\wrt t\leq C\Lambda(A)\norm{f}{p}\norm{g}{q},
$$
for all $f,g\in (L^{p}\cap L^{q})(\Omega)$. Analyticity of $(T^{A,\oV}_{t})_{t>0}$ in $L^{p}(\Omega)$, Fatou's lemma and a density argument show that
\begin{equation}\label{eq: N CDMY}
\int^{\infty}_{0}\!\mod{\int_{\Omega}\oL^{A}_{p}T^{A,\oV}_{te^{\pm i\theta}}f\, \overline{g }\wrt x}\wrt t \leq 2 \Lambda(A)C\norm{f}{p}\norm{g}{q},
\end{equation}
for all $f\in L^{p}(\Omega)$ and all $g\in L^{q}(\Omega)$. 

We now apply \cite[Theorem~4.6 and example~4.8]{CDMY} to the dual subpair $\sk{\ovR(\oL^{A}_{p})}{\ovR(\oL^{A^{*}}_{q})}$ and the dual operators $(\oL^{A}_{p})_{\vert\vert}$, $(\oL^{A^{*}}_{q})_{\vert\vert}$, and deduce from \eqref{eq: N CDMY} that $\omega_{H^{\infty}}(\oL^{A}_{p})\leq \pi/2-\theta$.

\numberwithin{theorem}{section}
\numberwithin{equation}{section}

\appendix  

\section{Comparison with known results}\label{s: comparison} 
Under stronger assumptions on $A$ and/or $\Omega$ than those of Theorem~\ref{t: N principal} it is known that $(T^{A,\oV}_{t})_{t>0}$, where $\oV$ is one of the subspaces of Section~\ref{s: boundary}, extrapolates to a bounded strongly continuous semigroup on $L^{p}(\Omega)$ in a range of $p$'s larger than the range given by $p$-ellipticity, and the negative generator $\oL^{A}_{p}$ has parabolic maximal regularity in this larger range of exponents. 

\subsection{Semigroup extrapolation}
Let $\oV$ denote one of the subspaces of Section~\ref{s: boundary}.
\medskip

(i) For every $\Omega\subseteq\R^{d}$ and every {\bf real-valued} $A\in\cA(\Omega)$ the semigroup $(T^{A,\oV}_{t})_{t>0}$ is sub-Markovian (see \cite{Ouh1992,Ouh1996} and  \cite[Corollary~4.3 and Corollary~4.10]{O}), so $\omega_{H^{\infty}}(\oL^{A}_{p})<\pi/2$ for all $p>1$; see \cite{cowling}, \cite{KS}, \cite{CD-mult} for symmetric real-valued $A$ and \cite[Corollary~5.2]{KW} for nonsymmetric real-valued $A$. It follows from Dore-Venni theorem \cite{DoreVenni, PrussSohr} that, in this case, $\oL^{A}_{p}$ has parabolic maximal regularity for all $p>1$. 
\smallskip

(ii) As for {\bf complex-valued} $A\in\cA(\Omega)$, define the upper and lower Sobolev exponent by the rule $2^{*}=2d/(d-2)$ if $d\geq 3$ and $2^{*}=+\infty$ if $d=1,2$;  $2_{*}=(2^{*})^{\prime}$. For $A\in\cA(\Omega)$, $\delta\geq 0$ and $\theta\in [0,\pi/2)$ denote by $J(A,\oV,\delta,\theta)$ the maximal open interval in $(1,+\infty)$ such that $(e^{-\delta z}T^{A,\oV}_{z})_{z\in{\bf S}_{\theta}}$ is uniformly bounded in $L^{p}(\Omega)$, for all $p\in J(A,\oV,\delta,\theta)$. Denote by $\omega(\oL^{A}_{2})$ the sectoriality angle of $\oL^{A}_{2}$.

\begin{itemize}
\item[(a)] When $\Omega=\R^{d}$, $d=1,2$ and $A\in\cA(\R^{d})$ we have $J(A,W^{1,2}(\R^{d}),0,\theta)=(1,+\infty)$, for all $\theta>\pi/2-\omega(\oL^{A}_{2})$ \cite{AMT}. When $\Omega=\R^{d}$, $d\geq 3$ and $A\in\cA(\R^{d})$ Auscher proved \cite{Auscher1} that there exists $\epsilon>0$ depending only on dimension and the ellipticity constants of $A$ such that $(2_{*}-\epsilon,2^{*}+\epsilon)\subset J(A,W^{1,2}(\R^{d}),0,\theta)$, for all $\theta>\pi/2-\omega(\oL^{A}_{2})$.

\item[(b)] The results in (a) are sharp \cite{HMMcl}: for all $d\geq 3$ and all $p\in [1,\infty]\setminus[2_{*},2^{*}]$ there exists $A\in\cA(\R^{d})$ such that $(T^{A}_{t})_{t>0}$ is not bounded on $L^{p}(\R^{d})$.
\item[(c)] In the case when $\oV$ has the embedding property \eqref{eq: N S embedding} with $q=2^{*}$ Egert implicitly\footnote{In \cite{Egert} the author also considered  systems. The results are stated under geometric assumptions on $\Omega$ which are stronger than \eqref{eq: N S embedding} with $q=2^{*}$ . These stronger assumptions are used, for example, for proving results on Riesz transforms. However, for the specific result stated here \eqref{eq: N S embedding} with $q=2^{*}$ suffices; see \cite{Egert2018}.} proved in \cite[Theorem~1.6]{Egert} that $(2_{*},2^{*})\subset J(A,\oV,\delta,\theta)$, for every $\delta>0$ and $\theta>\pi/2-\omega(\oL^{A}_{2})$. Also, Egert extended the result in (a) ($d\geq 3$) by proving  that if $\Omega$ is bounded and connected, the boundary is Lipschitz regular around the Neumann part $\overline{\partial\Omega\setminus D}$ and $D$ satisfies further geometric assumptions (see \cite{Egert, H-DJKR, Elst2019})
one always has that $(2_{*}-\epsilon,2^{*}+\epsilon)\subset J(A,\oV,\delta,\theta)$, for all $\delta>0$ and $\theta>\pi/2-\omega(\oL^{A}_{2})$ and some $\epsilon>0$ depending only on dimension, the ellipticity constants of $A$ and the geometry of $\Omega$. Note that under the above mentioned geometric assumptions there exists a Sobolev extension operator $E: \oV\rightarrow W^{1,2}(\R^{d})$ and so \eqref{eq: N S embedding} holds true with $q=2^{*}$.
\end{itemize}

A result similar to (c), but without any further geometric assumption on $D$, has been previously obtained by Tolksdorf \cite{Tolks}, who also proved maximal regularity in the range $(2_{*}-\epsilon,2^{*}+\epsilon)$. 
\smallskip

Recently, Egert improved the result in (c) by combining our notion of $p$-ellipticity and its properties with the technology developed in \cite{Auscher1,Egert}. A similar result has been proved by ter Elst, Haller-Dintelmann, Rehberg and Tolksdorf \cite[Theorem~3.1]{Elst2019} by means of a different technique, but still using $p$-ellipticity.
\smallskip
 
For $p\geq 2$ set
$$
p^{\circ}:=\frac{p}{2}2^{*}=\frac{pd}{d-2}.
$$
\begin{proposition}[{\cite[Theorem~1]{Egert2018}}]\label{p: Egert 2019}
Let $d\geq 3$. Let $\Omega\subset\R^{d}$ be open. Let $\oV$ denote one of the subspaces of Section~\ref{s: boundary}. Assume the Sobolev embedding \eqref{eq: N S embedding} with $q=2^{*}$. Let $p^{}_{0}>2$ be such that $\Delta_{p_{0}}(A)>0$. Then
$$
\left[(p^{\circ}_{0})^{\prime},p^{\circ}_{0}\right]\subset J(A,\oV,\delta,\theta),
$$
for all $\delta>0$ and $\theta>\pi/2-\omega(\oL^{A}_{2})$.
\end{proposition}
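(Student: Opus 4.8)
Egert proved this as \cite[Theorem~1]{Egert2018}; the plan is to bootstrap the $L^{p_{0}}$-theory supplied by $p$-ellipticity using the single Sobolev step \eqref{eq: N S embedding}. Write $q_{0}:=p_{0}'$. Since $\Delta_{p_{0}}(A)>0$ forces $\Delta_{r}(A)>0$ for all $r\in[q_{0},p_{0}]$ (Lemma~\ref{l: N prop of Deltap}~(\ref{L4 (i)}), (\ref{L4 (iii)})), Lemma~\ref{l: N analytic sem} already gives that $(e^{-\delta z}T^{A,\oV}_{z})_{z\in\bS_{\theta}}$ is uniformly bounded on $L^{r}(\Omega)$ for $r\in[q_{0},p_{0}]$; the task is to enlarge this range to $[(p_{0}^{\circ})',p_{0}^{\circ}]$. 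By Lemma~\ref{l: N prop of Deltap}~(\ref{L4 (ii)}) we also have $\Delta_{p_{0}}(A^{*})>0$, and $(T^{A,\oV}_{z})^{*}=T^{A^{*},\oV}_{\overline{z}}$, so by duality it suffices to treat the upper endpoint, i.e. to prove $L^{p}$-boundedness of $(e^{-\delta z}T^{A,\oV}_{z})_{z\in\bS_{\theta}}$ for $p\in[p_{0},p_{0}^{\circ})$. The \emph{closed} interval in the statement then follows from Lemma~\ref{l: N prop of Deltap}~(\ref{L4 (iii)}) (openness of $p$-ellipticity): choosing $p_{1}>p_{0}$ with $\Delta_{p_{1}}(A)>0$ and running the argument with $p_{1}$ produces the open interval $((p_{1}^{\circ})',p_{1}^{\circ})\supseteq[(p_{0}^{\circ})',p_{0}^{\circ}]$, which therefore lies in the maximal open interval $J(A,\oV,\delta,\theta)$.

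The heart of the matter is a Sobolev self-improvement. Let $u=T^{A,\oV}_{z}f$; by analyticity of the semigroup on $L^{p_{0}}(\Omega)$ (Lemma~\ref{l: N analytic sem}) and on $L^{2}(\Omega)$ one has $u\in\Dom(\oL^{A}_{p_{0}})\cap\Dom(\oL^{A}_{2})\subseteq\oV$, so $|u|^{p_{0}-2}u\in\oV$ (truncation stability of $W^{1,2}_{D}(\Omega)$, exactly as in Lemma~\ref{l: invariance}) may be used as a test function in \eqref{eq: ibp}. The $p_{0}$-ellipticity inequality in its Cialdea--Maz'ya / Dindo\v{s}--Pipher form (see the Introduction and \cite{CiaMaz,Dindos-Pipher}) bounds $\int_{\Omega}|\nabla(|u|^{p_{0}/2})|^{2}$ from above by $C\,\Re\int_{\Omega}\oL^{A}_{p_{0}}u\cdot\overline{|u|^{p_{0}-2}u}$, which by H\"older is $\leq C\|\oL^{A}_{p_{0}}u\|_{p_{0}}\|u\|_{p_{0}}^{p_{0}-1}$. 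Since $|u|^{p_{0}/2}\in\oV$, the Sobolev embedding \eqref{eq: N S embedding} with $q=2^{*}$, together with $\||u|^{p_{0}/2}\|_{2}^{2}=\|u\|_{p_{0}}^{p_{0}}$ and the identity $p_{0}\cdot2^{*}/2=p_{0}^{\circ}$, yields
\[
\|u\|_{p_{0}^{\circ}}^{p_{0}}=\bigl\||u|^{p_{0}/2}\bigr\|_{2^{*}}^{2}\leq C\bigl(\|\oL^{A}_{p_{0}}u\|_{p_{0}}\|u\|_{p_{0}}^{p_{0}-1}+\|u\|_{p_{0}}^{p_{0}}\bigr).
\]
Using $\|\oL^{A}_{p_{0}}T^{A,\oV}_{z}f\|_{p_{0}}\lesssim|z|^{-1}\|f\|_{p_{0}}$ and $\|T^{A,\oV}_{z}f\|_{p_{0}}\lesssim\|f\|_{p_{0}}$ (analyticity and contractivity on $L^{p_{0}}$, from Lemma~\ref{l: N analytic sem} and Proposition~\ref{p: N Nittka}), this becomes an $L^{p_{0}}$--$L^{p_{0}^{\circ}}$ smoothing estimate $\|T^{A,\oV}_{z}f\|_{p_{0}^{\circ}}\lesssim(1+|z|^{-1/p_{0}})\|f\|_{p_{0}}$, the singularity at $z=0$ being as innocuous as in the classical $L^{2}$--$L^{2^{*}}$ bound.

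To pass from this smoothing estimate to $L^{p}$-boundedness of the semigroup on the full interval, I would combine it with Davies--Gaffney estimates. The $p_{0}$-ellipticity inequality is precisely what is needed to run Davies' perturbation argument at the $L^{p_{0}}$-level: twisting by $e^{\rho\phi}$ with $\phi$ bounded, $|\nabla\phi|\leq1$, and using the $p_{0}$-dissipativity estimate in place of plain ellipticity gives $L^{p_{0}}$--$L^{p_{0}}$ off-diagonal bounds $\|T^{A,\oV}_{t}(\mathbf{1}_{E}f)\|_{L^{p_{0}}(F)}\lesssim e^{-c\,{\rm dist}(E,F)^{2}/t}\|f\|_{p_{0}}$. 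Splitting $T^{A,\oV}_{z}=T^{A,\oV}_{z/2}\circ T^{A,\oV}_{z/2}$ and composing these with the $L^{p_{0}}$--$L^{p_{0}^{\circ}}$ estimate of the previous paragraph produces $L^{p_{0}}$--$L^{p_{0}^{\circ}}$ off-diagonal estimates for $(T^{A,\oV}_{z})$ on the relevant sector. The off-diagonal extrapolation technology of Blunck--Kunstmann and Auscher (cf. \cite{Auscher1,Egert}) then applies with $p_{0}$ as the lower exponent: a bounded analytic semigroup on $L^{p_{0}}(\Omega)$ satisfying such off-diagonal estimates extends to a bounded analytic semigroup on $L^{p}(\Omega)$ for every $p\in[p_{0},p_{0}^{\circ})$, with analyticity sector arbitrarily close to that on $L^{2}(\Omega)$. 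After incorporating the exponential shift $e^{-\delta z}$, this gives the uniform boundedness of $(e^{-\delta z}T^{A,\oV}_{z})_{z\in\bS_{\theta}}$ on $L^{p}(\Omega)$ for $p\in[p_{0},p_{0}^{\circ})$ and the $\theta,\delta$ of the statement; together with the dual interval and the $p_{1}$-enlargement of the first paragraph, this completes the proof.

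The step I expect to be the main obstacle is running Davies' twist at the $L^{p_{0}}$-level: one is forced to work with the nonlinear functional $u\mapsto\Re\int_{\Omega}\langle A\nabla u,\nabla(|u|^{p_{0}-2}u)\rangle$ rather than with a quadratic form, and must carefully re-derive the $O(\rho^{2})$ perturbation bound from $p_{0}$-ellipticity. A second delicate point is bookkeeping the analyticity angle through the extrapolation so that it attains $\pi/2-\omega(\oL^{A}_{2})$ for \emph{every} $\delta>0$ --- this is exactly the place where the positive shift ($\delta>0$ rather than $\delta=0$) does the work. The truncation stabilities $|u|^{p_{0}/2},|u|^{p_{0}-2}u\in W^{1,2}_{D}(\Omega)$ for $u\in W^{1,2}_{D}(\Omega)$ are routine, via the chain rule and the density definition of $W^{1,2}_{D}(\Omega)$.
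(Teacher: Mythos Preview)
The paper does not prove Proposition~\ref{p: Egert 2019}. It is stated in the appendix, with explicit attribution to \cite[Theorem~1]{Egert2018}, purely in order to situate Theorem~\ref{t: N principal} relative to the existing literature under a Sobolev-embedding hypothesis; no argument is given or even sketched. There is therefore nothing in this paper to compare your proposal against.

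For what it is worth, your outline does capture the mechanism behind Egert's result: $p_{0}$-ellipticity yields control of $\nabla(|u|^{p_{0}/2})$ in $L^{2}$, the Sobolev embedding \eqref{eq: N S embedding} upgrades this to an $L^{p_{0}}\!\to\! L^{p_{0}^{\circ}}$ smoothing estimate, and off-diagonal extrapolation technology then propagates boundedness to the full interval. One remark on the route you chose: the obstacle you flag --- running Davies' exponential twist directly at the nonlinear $L^{p_{0}}$ level --- can be avoided. It suffices to interpolate the standard $L^{2}$--$L^{2}$ Davies--Gaffney estimates (which require only ordinary ellipticity) against the $L^{p_{0}}$ contractivity coming from Proposition~\ref{p: N Nittka}, obtaining $L^{r}$--$L^{r}$ off-diagonal decay for $r$ between $2$ and $p_{0}$, and then compose with the $L^{p_{0}}\!\to\! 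L^{p_{0}^{\circ}}$ step. A smaller gap: your test function $|u|^{p_{0}-2}u$ need not lie in $L^{2}(\Omega)$, since for $p_{0}>2$ membership in $L^{2}$ requires $u\in L^{2(p_{0}-1)}$, which is strictly stronger than $u\in L^{p_{0}}\cap L^{2}$; a genuine truncation is needed here, not only for the chain rule but already for square-integrability.
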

\subsection{Functional calculus and maximal regularity}
Next result follows by combining Proposition~\ref{p: Egert 2019} with a result of Blunck and Kunstmann \cite{BK} that was simplified by Auscher in \cite[Theorem~1.1]{Auscher1} and extended to domains $\Omega\subset\R^{d}$ by Egert in \cite[Proposition~5.2]{Egert}. 
\begin{corollary}[\cite{Egert,Egert2018}]\label{c: Egert 2018}
Under the assumptions of Proposition~\ref{p: Egert 2019} we have  
$$
\omega_{H^{\infty}}(\oL^{A}_{p}+\delta I)=\omega(\oL^{A}_{2})<\pi/2,
$$ 
for every $p\in ((p^{\circ}_{0})^{\prime},p^{\circ}_{0})$ and $\delta>0$.

As a consequence \cite{DoreVenni, PrussSohr}, $\oL^{A}_{p}$ has parabolic maximal regularity, for every $p\in [(p^{\circ}_{0})^{\prime},p^{\circ}_{0}]$. 
\end{corollary}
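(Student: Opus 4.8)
The plan is to deduce Corollary~\ref{c: Egert 2018} from Proposition~\ref{p: Egert 2019} by feeding its semigroup bounds into the $L^{2}\!\to\!L^{p}$ extrapolation principle for the holomorphic functional calculus due to Blunck and Kunstmann \cite{BK}, in the streamlined form of \cite[Theorem~1.1]{Auscher1} and its extension to mixed boundary conditions on arbitrary open sets \cite[Proposition~5.2]{Egert}. The three inputs that principle requires are: (1) bounded analyticity of the semigroup on $L^{p}(\Omega)$ in a cone whose half-angle exceeds $\pi/2-\omega(\oL^{A}_{2})$; (2) the Davies--Gaffney ($L^{2}$--$L^{2}$ off-diagonal) estimates for $(T^{A,\oV}_{t})_{t>0}$; and (3) a bounded holomorphic functional calculus of angle $\omega(\oL^{A}_{2})$ on $L^{2}(\Omega)$.

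First I would fix $p\in\big((p^{\circ}_{0})',p^{\circ}_{0}\big)$ and $\delta>0$ and record input (1): by Proposition~\ref{p: Egert 2019}, for every $\theta\in\big(\pi/2-\omega(\oL^{A}_{2}),\pi/2\big)$ the family $\big(e^{-\delta z}T^{A,\oV}_{z}\big)_{z\in\bS_{\theta}}$ is uniformly bounded on $L^{p}(\Omega)$, so $\oL^{A}_{p}+\delta I$ is sectorial on $L^{p}(\Omega)$ with $\omega(\oL^{A}_{p}+\delta I)\le\omega(\oL^{A}_{2})<\pi/2$. Input (2) is available for every open $\Omega$, every $A\in\cA(\Omega)$ and every $\oV$ as in Section~\ref{s: boundary}: the integrated Davies--Gaffney bounds follow from ellipticity alone, with no regularity of $A$ or $\partial\Omega$ (this is precisely the setting of \cite[Proposition~5.2]{Egert}). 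Input (3) is McIntosh's theorem \cite{Mc}: on the Hilbert space $L^{2}(\Omega)$ one has $\omega_{H^{\infty}}(\oL^{A}_{2})=\omega(\oL^{A}_{2})<\pi/2$, with an explicit norm bound from Crouzeix--Delyon \cite{CrDe}. Combining (1)--(3) via \cite{BK,Auscher1,Egert} yields that $\oL^{A}_{p}+\delta I$ has a bounded $H^{\infty}(\bS_{\mu})$-calculus on $L^{p}(\Omega)$ for every $\mu>\omega(\oL^{A}_{2})$, that is, $\omega_{H^{\infty}}(\oL^{A}_{p}+\delta I)\le\omega(\oL^{A}_{2})$. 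The reverse inequality is the easy one: $\omega_{H^{\infty}}(\oL^{A}_{p}+\delta I)\ge\omega(\oL^{A}_{p}+\delta I)$ always, and by consistency of the $L^{r}$-realizations together with Stein interpolation between $L^{p}$ and $L^{p'}$ (both exponents lie in the interval, which is symmetric under conjugation) the $L^{p}$-sectoriality angle cannot drop below $\omega(\oL^{A}_{2})$; this gives the equality $\omega_{H^{\infty}}(\oL^{A}_{p}+\delta I)=\omega(\oL^{A}_{2})$.

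For the maximal regularity assertion I would first treat $p$ in the open interval $\big((p^{\circ}_{0})',p^{\circ}_{0}\big)$: the space $L^{p}(\Omega)$ is UMD and $\oL^{A}_{p}+\delta I$ has a bounded $H^{\infty}$-calculus of angle $<\pi/2$, so by the Dore--Venni theorem in the Pr\"uss--Sohr form \cite{DoreVenni,PrussSohr} (via bounded imaginary powers of suitable angle), or equivalently by the Weis--Kalton characterization of maximal regularity \cite{KW}, the operator $\oL^{A}_{p}+\delta I$ has parabolic maximal regularity; since maximal regularity is unaffected by adding a nonnegative multiple of the identity (see Section~\ref{s: max funct}), so does $\oL^{A}_{p}$. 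To cover the two endpoints $p=(p^{\circ}_{0})'$ and $p=p^{\circ}_{0}$, I would invoke openness of $p$-ellipticity: by Lemma~\ref{l: N prop of Deltap}~(\ref{L4 (iii)}) there is $p_{1}>p_{0}$ with $\Delta_{p_{1}}(A)>0$, and then $[(p^{\circ}_{0})',p^{\circ}_{0}]\subset\big((p^{\circ}_{1})',p^{\circ}_{1}\big)$, so every such $p$ is interior to the interval attached to $p_{1}$ and the previous paragraph applies verbatim.

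The only substantial point — and the reason this is a corollary rather than a theorem — lies in the extrapolation machinery: one must check that the Davies--Gaffney estimates genuinely hold for the divergence-form semigroup with mixed boundary conditions on an arbitrary open $\Omega$ (this is done in \cite{Egert}) and that the hypotheses of \cite[Proposition~5.2]{Egert} are literally satisfied by $\oL^{A}_{p}+\delta I$ over the range of exponents delivered by Proposition~\ref{p: Egert 2019}. Granting that, the rest is routine bookkeeping with sectoriality angles and the standard UMD maximal-regularity dictionary.
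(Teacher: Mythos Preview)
Your proposal is correct and follows essentially the same route as the paper: the paper simply records (in the sentence preceding the corollary) that the result follows by combining Proposition~\ref{p: Egert 2019} with the Blunck--Kunstmann extrapolation principle \cite{BK} in the form of \cite[Theorem~1.1]{Auscher1} and \cite[Proposition~5.2]{Egert}, and then invokes \cite{DoreVenni,PrussSohr} for maximal regularity. Your write-up is a faithful expansion of precisely this strategy, including the endpoint argument via openness of $p$-ellipticity; the only minor remark is that your justification of the lower bound $\omega_{H^{\infty}}(\oL^{A}_{p}+\delta I)\geq\omega(\oL^{A}_{2})$ is a bit sketchy, but since the corollary is quoted from \cite{Egert,Egert2018} rather than proved in the paper, this does not constitute a divergence.
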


\subsection{Absence of Sobolev embeddings}\label{s: N nonSobolev} 
It is well-known that \eqref{eq: N S embedding} for $q>2$ requires geometric assumptions on $\Omega$ and does not hold in general, see \cite[Theorem~4.46, Theorem~4.48 and Example~4.55]{AdFour} and \cite[Proposition~3 and Example~6]{BuDa}. For simplicity, we only discuss the case when $\oV=W^{1,2}(\Omega)$.
\smallskip

When $\Omega\subset\R^{d}$ has finite measure, by the Rellich-Kondrachov theorem \cite[Theorem~5 and Corollary~1]{PioPekka}, the Sobolev embedding \eqref{eq: N S embedding} for some $q>2$ implies the compactness of the resolvent operator $(I+\oL^{A}_{2})^{-1}$ for all $A\in\cA(\Omega)$ (see also \cite[Theorem~7]{BuDa}). So, in this case, the spectrum of $\oL^{I}_{2}$ is discrete.
\begin{enumerate}[(a)]
\item A classical example of Courant and Hilbert \cite[p. 531]{CourantHilbert} shows that there exists a ``rooms and passages'' connected bounded domain $\Omega\subset\R^{2}$ for which $0\in\sigma_{\rm ess}(\oL^{I}_{2})$.  Actually, for $d\geq 2$, given any closed subset $S$ of $[0,+\infty)$ there exists an open connected subset $\Omega$ of the unit ball in $\R^{d}$ such that $\sigma_{\rm ess}(\oL^{I}_{2})=S$, see \cite{HeSeSi}.
\item Let $d\geq 2$. By using a criterion of Evans and Harris \cite{EvHa} (see also \cite[p. 106]{DaSi1}), one can easily construct unbounded ``horn-shaped'' $\Omega\subset\R^{d}$ of finite measure for which the Neumann Laplacian $\oL^{I}_{2}$ does not have compact resolvent operators. A simple example of this phenomenon is given by the regions
\begin{equation}\label{eq: Omegaalpha}
\Omega_{\alpha}=\{(x,x^{\prime})\in \R\times\R^{d-1}: x>0,\ |x^{\prime}|<ce^{-\alpha x}\},\quad \alpha>0,\ c>0.
\end{equation}
\end{enumerate}

\subsection{Sharpness of Theorem~\ref{t: N principal}}\label{s: sharp} For open sets like those described above in (b), the conclusions of Proposition~\ref{p: Egert 2019} and Corollary~\ref{c: Egert 2018} are false, because the analyticity angle of the semigroup and the functional calculus angle of the generator may depend on $p$, even for smooth $A$ and pure Neumann boundary conditions.

Kunstmann \cite{kunstmann2} further developed a result of Davies and Simon \cite{DaSi1} for $p=2$ and proved that for $d=2$ and $p>1$ the $L^{p}$ spectrum of the Neumann Laplacian in the region $\Omega_{\alpha}$ satisfies the inclusions
\begin{equation}\label{eq: parabola}
\{0\}\cup {\bf P}_{p,\alpha}\subseteq\sigma(\oL^{I}_{p})\subseteq \left[0,\alpha^{2}/4\right)\cup {\bf P}_{p,\alpha},
\end{equation}
where 
$$
{\bf P}_{p}=\left\{x+iy\in\C: x=\frac{p^{2}}{(p-2)^{2}}y^{2}+\frac{1}{p}-\frac{1}{p^{2}}\right\},\quad {\bf P}_{p,\alpha}=\alpha^{2}{\bf P}_{p}.
$$
Moreover, $\sigma_{\rm ess}(\oL^{I}_{p})={\bf P}_{p,\alpha}$.

For $p\in (1,+\infty)$ and $\alpha>0$ the parabolic region ${\bf P}_{p,\alpha}$ is tangent to the critical sector ${\bf S}_{\phi^{*}_{p}}$, where $\phi^{*}_{p}=\arcsin|2/p-1|$. Recall that $\phi_{p}=\pi/2-\phi^{*}_{p}$ and $\phi^{*}_{p}$ are, respectively, the optimal analyticity angle in $L^{p}$ \cite{lp,KR} and the optimal functional calculus angle in $L^{p}$ \cite{CD-mult}, for all generators of symmetric contraction semigroups.
\medskip

By attaching to, say, a ball in $\R^{2}$ countably many disjoint horns $\widetilde{\Omega}_{\alpha_{n}}$, each one congruent to some $\Omega_{\alpha_{n}}$, $\alpha_{n}>0$, Kunstmann \cite{kunstmann2} was able to construct a domain $\Omega_{\max}$ of finite measure such that the associated Neumann Laplacian has maximal $L^{p}$ spectrum: 
\begin{equation}\label{eq: max spec}
\sigma(\oL^{I}_{p})=\overline{\bS}_{\phi^{*}_{p}},\quad 1<p<+\infty.
\end{equation}
Consider the region $\Omega_{{\rm max}}$ described above. Recall from \cite[Lemma~5.22]{CD-DivForm} that for $\phi\in(-\pi/2,\pi/2)$ and $p>1$ we have
\begin{equation}\label{eq: angle}
\Delta_p(e^{i\phi}I)=\cos\phi-\mod{1-2/p}=\cos\phi-\cos\phi_{p}.
\end{equation}   
Fix $\phi\in (0,\pi/2)$. Since $(T^{I}_{t})_{t>0}$ is symmetric and sub-Markovian, we obtain from \cite{CD-mult}, \eqref{eq: angle} and \eqref{eq: max spec} that 
$$
\left\{p\in (1,+\infty): \omega_{H^{\infty}}(\oL^{e^{i\phi}I}_{p})<\pi/2\right\}=\left\{p\in (1,+\infty):\Delta_{p}(e^{i\phi}I)>0\right\}.
$$

Let $p>1$ be such that $\Delta_{p}(e^{i\phi}I)\leq 0$, that is, such that $\phi\geq\phi_{p}$. Then $\oL^{e^{i\phi} I}_{p}$ does not have parabolic maximal regularity in $L^{p}(\Omega_{{\rm max}})$, since otherwise, by \cite[Theorem~2.2]{Dore}, there would exist $\epsilon,\delta>0$ such that 
$
\sigma(\oL^{I}_{p})\subseteq \overline{\bS}_{\pi/2-\phi-\epsilon}-\delta
$, contradicting \eqref{eq: max spec}. 

More generally, by combining \cite{CD-mult} with Proposition~\ref{p: N D-V bis} and \cite[Example~2.4 and Remark~3.2]{Kunstmann3}, we deduce that for all $d\geq2$ there exist $\Omega\subset\R^{d}$ and $A\in\cA(\Omega)$ such that the Neumann operator $\oL^{A}_{p}$ has parabolic maximal regularity if and only if $\Delta_{p}(A)>0$.

This shows that Theorem~\ref{t: N principal} is sharp.
\subsection{Extrapolation in $L^{\infty}$ for smooth coefficients: counterexamples}\label{s: extrapolation ex} Consider the open set $\Omega_{\rm max}$ of \cite{kunstmann2} described above. The equality \eqref{eq: max spec} implies that if $\phi>0$ then $(T^{e^{i\phi}I}_{t})_{t>0}$=$(T^{I}_{e^{i\phi}t})_{t>0}$ is {\sl not} exponentially bounded in $L^{\infty}(\Omega)$. Indeed, assuming the contrary, by interpolation with $L^{2}$ and the relation $\overline{T^{I}_{z}f}=T^{I}_{\bar z}\bar f$, $\Re z>0$, (because $(T^{I}_{t})_{t>0}$ is positive and analytic in $L^{2}$ in $\{\Re z>0\}$), there would exist $r_{0}>0$ and $\phi_{0}\in (0,\pi/2)$ such that 
$$
\sup_{t>0}\norm{e^{-r_{0}t}T^{I}_{e^{\pm i\phi_{0}}t}}{p}<+\infty,\quad \forall p>2.
$$
This implies that $\sigma(\oL^{I}_{p})\subseteq \overline{{\bf S}}_{\pi/2-\phi_{0}}-r_{0}$ for all $p>2$, contradicting \eqref{eq: max spec} when $p$ is such that $\phi^{*}_{p}>\pi/2-\phi_{0}$. 

\begin{example}
We would expect that if $\phi\neq 0$ then one has $T^{I}_{e^{i\phi}t}(L^{\infty}(\Omega_{\rm max}))\not\subset L^{\infty}(\Omega_{\rm max})$ for some $t>0$, but we could not extract this result from the existing literature. Therefore, we now construct a (disconnected) open set $\Omega\subset\R^2$ such that there exists $\phi>0$ for which $T^{e^{i\phi}I}_{1}(L^{\infty}(\Omega))\not\subset L^{\infty}(\Omega)$.

We first consider the Neumann Laplacian $\Delta^{\Omega_{\alpha}}:=\oL^{I}$ in the region $\Omega_{\alpha}$ given by \eqref{eq: Omegaalpha} with $d=2$, $\alpha>0$ and $c=\alpha^{-1}$. Note that $\Omega_{\alpha}=\alpha^{-1}\cdot\Omega_{1}$. Set $T^{\Omega_{\alpha}}_{t}=\exp(-t\Delta^{\Omega_{\alpha}})$.

By arguing much as in the case of $\Omega_{\rm max}$ discussed above and using \eqref{eq: parabola}, we see that 
$$
\sup_{t>0}\norm{T^{\Omega_{1}}_{e^{i\phi}t}}{\infty}=+\infty
$$
for every $\phi\in (0,\pi/2)$. Fix $\phi\in (0,\pi/2)$. By the uniform boundeness principle, there exists a nonzero $f_{1}\in L^{\infty}(\Omega_{1})$ and a sequence $t_{n}>0$ such that
$$
\norm{T^{\Omega_{1}}_{t^{2}_{n}e^{i\phi}}f_{1}}{L^{\infty}(\Omega_{1})}\geq n\, \norm{f_{1}}{L^{\infty}(\Omega_{1})},\quad \forall n\in\N_{+}.
$$
We now use a rescaling argument. For $\alpha>0$, consider the operator
$$
(J_{\alpha}f)(x,y):=\alpha f(\alpha x,\alpha y).
$$
It is not hard to see that
\begin{itemize}
\item[(\rm i)] $J_{\alpha}:L^{2}(\Omega_{1})\rightarrow L^{2}(\Omega_{\alpha})$ is a surjective isometry with $J^{-1}_{\alpha}=J_{1/\alpha}$;
\item[(\rm ii)] $J_{\alpha}\Dom(\Delta^{\Omega_{1}}_{2})=\Dom(\Delta^{\Omega_{\alpha}}_{2})$;
\item[(\rm iii)] $J^{-1}_{\alpha}\Delta^{\Omega_{\alpha}}_{2}J_{\alpha}=\alpha^{2}\Delta^{\Omega_{1}}_{2}$.
\end{itemize}
It follows that 
$$
J_{1/\alpha}T^{\Omega_{\alpha}}_{z}J_{\alpha}=T^{\Omega_{1}}_{\alpha^{2}z},\quad \forall z\in\C_{+},\quad \forall \alpha>0.
$$
Hence, for $\alpha=t_{n}$ and $z=e^{i\phi}$,
$$
\left(T^{\Omega_{t_{n}}}_{e^{i\phi}}\left(f_{1}(t_{n}\cdot\right)\right)(\tfrac{\cdot}{t_{n}})=T^{\Omega_{1}}_{t^{2}_{n}e^{i\phi}}\left(f_{1}\right)(\cdot).
$$
Set $f_{n}=f_{1}(t_{n}\cdot)$. Then $\norm{f_{n}}{L^{\infty}(\Omega_{t_{n}})}=\norm{f_{1}}{L^{\infty}(\Omega_{1})}$ and
\begin{equation}\label{eq: h12}
\begin{aligned}
n\norm{f_{1}}{L^{\infty}(\Omega_{1})}\leq \|T^{\Omega_{1}}_{t^{2}_{n}e^{i\phi}}f_{1}\|_{L^{\infty}(\Omega_{1})}=\|T^{\Omega_{t_{n}}}_{e^{i\phi}}f_{n}\|_{L^{\infty}(\Omega_{t_{n}})}.
\end{aligned}
\end{equation}
For each $n\in\N_{+}$ select a rigid motion of plane $R_{n}$ such that the congruent copies 
$$
\widetilde{\Omega}_{t_{n}}:=R_{n}\left(\Omega_{t_{n}}\right),\quad n\in\N_{+}
$$ 
are pairwise disjoint. Define 
$$
\Omega:=\bigcup_{n\in\N_{+}}\widetilde{\Omega}_{t_{n}},\quad \widetilde{f}_{n}:=f_{n}(R^{-1}_{n}\cdot),\quad f=\sum_{n\in\N_{+}}\widetilde{f}_{n}\ca_{\widetilde{\Omega}_{t_{n}}}.
$$ 
Then, $\|f\|_{L^{\infty}(\Omega)}=\norm{f_{1}}{L^{\infty}(\Omega_{1})}$
and
$$
\left(T^{\Omega}_{e^{i\phi}}f\right)\ca_{\widetilde{\Omega}_{t_{n}}} =T^{\widetilde{\Omega}_{t_{n}}}_{e^{i\phi}}\widetilde{f}_{n}=\left(T^{\Omega_{t_{n}}}_{e^{i\phi}}f_{n}\right)(R^{-1}_{n}\cdot).
$$
It follows from \eqref{eq: h12} that $T^{\Omega}_{e^{i\phi}}f\notin L^{\infty}(\Omega)$.
\end{example}
\section{Rigidity of generalized convexity}
\begin{proposition}\label{p: limit of g-conv}
Let $A\in \C^{d\times d}$ be an elliptic matrix, $t_{0}>0$ and let $\gamma\in C^{1}([0,+\infty);\R)\cap C^{2}((0,+\infty)\setminus\{t_{0}\};\R)$. Set $\Gamma(\zeta)=\gamma(|\zeta|)$, $\zeta\in\R^{2}$. Suppose that $\Gamma$ is $A$-convex in $\R^{2}\setminus\{|\zeta|=t_{0}\}$. Then,
\begin{itemize}
\item[(i)] The profile function $\gamma$ is nondecreasing and convex, and $\Gamma$ is convex.
\item[(ii)] If $\Im A\neq 0$, then either $\gamma={\rm const}$ or $\gamma^{\prime}(t)>0$ for all $t>0$.
\end{itemize}
\end{proposition}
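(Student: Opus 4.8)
The plan is to reduce everything to an explicit formula for the generalised Hessian of a radial function. Writing $r=|\zeta|$ and using $D^{2}\Gamma(\zeta)=\tfrac{\gamma^{\prime}(r)}{r}I_{\R^{2}}+\big(\gamma^{\prime\prime}(r)-\tfrac{\gamma^{\prime}(r)}{r}\big)\tfrac{\zeta}{|\zeta|}\otimes\tfrac{\zeta}{|\zeta|}$, I would first treat $\big(\tfrac{\zeta}{|\zeta|}\otimes\tfrac{\zeta}{|\zeta|}\big)\otimes I_{\R^{d}}$ exactly as the diagonal term ($j=k$) in the proof of Lemma~\ref{l: N gen F4 bis} — that is, express it through the real forms of $e^{2i\arg\zeta}I_{\C^{d}}$ and of complex conjugation and then invoke \eqref{eq: real form} — to obtain
\[
H^{A}_{\Gamma}[\zeta;\cV_{d}(\xi)]=\frac{\gamma^{\prime}(r)}{r}\,\Re\sk{A\sigma}{\sigma}+\Big(\gamma^{\prime\prime}(r)-\frac{\gamma^{\prime}(r)}{r}\Big)\,\Re\sk{A\sigma}{\Re\sigma},\qquad \sigma:=e^{-i\arg\zeta}\xi,
\]
valid for every $r>0$ with $r\neq t_{0}$ and every $\xi\in\C^{d}$; this is the natural generalisation of Lemma~\ref{l: N prop of Deltap}~(\ref{L4 (v)}) (the case $\gamma(r)=r^{p}$), and as $\xi$ ranges over $\C^{d}$ and $\zeta$ over the circle $|\zeta|=r$, the vector $\sigma$ ranges over all of $\C^{d}$. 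Thus $A$-convexity of $\Gamma$ away from $\{|\zeta|=t_{0}\}$ says precisely that the right-hand side is $\geq0$ for all such $r$ and $\sigma$.

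For part (i) I would test this inequality with two special vectors. Taking $\sigma\in\R^{d}$ makes $\Re\sigma=\sigma$ and collapses the two terms to $\gamma^{\prime\prime}(r)\,\Re\sk{A\sigma}{\sigma}$; since $\Re\sk{A\sigma}{\sigma}\geq\lambda|\sigma|^{2}>0$ by \eqref{eq: N ellipticity}, this forces $\gamma^{\prime\prime}(r)\geq0$ on $(0,\infty)\setminus\{t_{0}\}$. Taking instead $\sigma=i\tau$ with $\tau\in\R^{d}\setminus\{0\}$ kills the second term (as $\Re(i\tau)=0$) and leaves $\tfrac{\gamma^{\prime}(r)}{r}\Re\sk{A\tau}{\tau}\geq0$, whence $\gamma^{\prime}(r)\geq0$. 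Because $\gamma\in C^{1}([0,\infty))$, continuity of $\gamma^{\prime}$ promotes these to $\gamma^{\prime}\geq0$ and $\gamma^{\prime}$ nondecreasing on all of $[0,\infty)$, i.e.\ $\gamma$ is nondecreasing and convex; and then $\Gamma=\gamma\circ|\cdot|$ is convex, as the composition of the nondecreasing convex function $\gamma$ with the convex function $|\cdot|$.

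For part (ii) assume $\Im A\neq0$. The algebraic point I would isolate first is that then $\Re\sk{A\sigma}{\Re\sigma}$ takes a strictly negative value: writing $\sigma=\sigma_{1}+i\sigma_{2}$ one computes $\Re\sk{A\sigma}{\Re\sigma}=\sk{(\Re A)\sigma_{1}}{\sigma_{1}}-\sk{(\Im A)\sigma_{2}}{\sigma_{1}}$, so choosing real $u,v$ with $\sk{(\Im A)u}{v}>0$ and $\sigma_{0}:=\varepsilon v+iu$ with $\varepsilon>0$ small gives $\Re\sk{A\sigma_{0}}{\Re\sigma_{0}}<0$ (while $\Re\sk{A\sigma_{0}}{\sigma_{0}}\geq\lambda|\sigma_{0}|^{2}>0$). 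Set $M:=\Re\sk{A\sigma_{0}}{\sigma_{0}}\big/\big({-}\Re\sk{A\sigma_{0}}{\Re\sigma_{0}}\big)\in(0,\infty)$. Suppose, for contradiction, that $\gamma$ is not constant yet $\gamma^{\prime}(r_{1})=0$ for some $r_{1}>0$, and put $r_{*}:=\sup\{r>0:\gamma^{\prime}(r)=0\}$. Using $\gamma^{\prime}\geq0$, its monotonicity and its continuity one gets $0<r_{*}<\infty$, with $\gamma$ constant on $[0,r_{*}]$, $\gamma^{\prime}(r_{*})=0$ and $\gamma^{\prime}(r)>0$ for every $r>r_{*}$. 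For $r>r_{*}$, $r\neq t_{0}$, dividing the displayed nonnegativity by $\gamma^{\prime}(r)/r>0$ and testing with $\sigma_{0}$ yields (using $\Re\sk{A\sigma_{0}}{\Re\sigma_{0}}<0$) the estimate $\tfrac{r\gamma^{\prime\prime}(r)}{\gamma^{\prime}(r)}-1\leq M$, that is,
\[
(\log\gamma^{\prime})^{\prime}(r)=\frac{\gamma^{\prime\prime}(r)}{\gamma^{\prime}(r)}\leq\frac{1+M}{r}\qquad\text{on }(r_{*},\infty)\setminus\{t_{0}\}.
\]
Fixing $r_{0}>r_{*}$ with $r_{0}\neq t_{0}$ and integrating over $(a,r_{0})$ for $r_{*}<a<r_{0}$ (splitting at $t_{0}$ if necessary, which is legitimate since $\gamma^{\prime}$ is continuous and strictly positive there) gives $\log\gamma^{\prime}(r_{0})-\log\gamma^{\prime}(a)\leq(1+M)\log(r_{0}/a)$. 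Letting $a\downarrow r_{*}$, the right-hand side stays bounded while the left-hand side tends to $+\infty$ because $\gamma^{\prime}(a)\to\gamma^{\prime}(r_{*})=0$; this contradiction proves (ii).

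The Hessian identity and step (i) are routine, so essentially all the content sits in (ii). The main obstacle there is that near $r_{*}$ one has no pointwise lower bound on $\gamma^{\prime\prime}$ (indeed $\gamma$ is a priori only $C^{2}$ away from $t_{0}$, and $r_{*}$ may even coincide with $t_{0}$), so a naive limit $r\downarrow r_{*}$ in the $A$-convexity inequality produces nothing. The device that does the job is the logarithmic-derivative bound $(\log\gamma^{\prime})^{\prime}(r)\le(1+M)/r$: it is exactly what converts the pointwise failure of $A$-convexity near a zero of $\gamma^{\prime}$ into the divergence $\log\gamma^{\prime}\to-\infty$, and identifying and exploiting it is the crux of the argument.
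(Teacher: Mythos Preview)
Your proof is correct and shares the same core idea as the paper's in part~(ii) --- both hinge on the behaviour of $(\log\gamma')'=\gamma''/\gamma'$ near the supremum $r_{*}$ of the zero set of $\gamma'$ --- but the packaging differs in both parts. For~(i), the paper first proves $\gamma'\geq0$ by choosing $x\perp\zeta$ in the real Hessian formula, and then deduces $\gamma''\geq0$ by rewriting $D^{2}\Gamma$ in terms of $D^{2}F_{1}$ and invoking $\Delta_{1}(A)\leq0$; your choice of $\sigma\in\R^{d}$ and $\sigma\in i\R^{d}$ in the complex formula is more direct and bypasses that detour. For~(ii), the paper runs the contrapositive: assuming $\gamma$ nonconstant with $\gamma'(r_{*})=0$, it shows (via $\log\gamma'\to-\infty$) that $p(r):=r\gamma''(r)/\gamma'(r)+1$ is unbounded on $(r_{*},\infty)$, rewrites $D^{2}\Gamma$ as a positive scalar multiple of $D^{2}F_{p(r)}$, and concludes $\Delta_{p}(A)\geq0$ for all $p>1$, whence $\Im A=0$ by Lemma~\ref{l: N prop of Deltap}~(\ref{L4 (vi)}). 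You instead extract a single test vector $\sigma_{0}$ directly from $\Im A\neq0$, obtain the uniform bound $(\log\gamma')'(r)\leq(1+M)/r$, and integrate to a contradiction --- more elementary and self-contained, since it avoids the $\Delta_{p}$ machinery. The paper's route, on the other hand, makes the link to $p$-ellipticity explicit, which is thematically natural in that context.
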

\begin{proof}
A rapid calculation shows that
\begin{equation}\label{eq: App1}
(D^{2}\Gamma)(\zeta)=\gamma^{\prime\prime}(|\zeta|) \frac{\zeta}{|\zeta|}\otimes\frac{\zeta}{|\zeta|}+\frac{\gamma^{\prime}(|\zeta|)}{|\zeta|}\left[I_{\R^{2}}-\frac{\zeta}{|\zeta|}\otimes\frac{\zeta}{|\zeta|}\right],
\end{equation}
for all $\zeta\in\R^{2}\setminus(\{|\zeta|=t_{0}\}\cup\{0\})$.
\smallskip

\underline {We first prove that $\gamma^{\prime}(t)\geq 0$ for all $t>0$.} By continuity it suffices to consider $t\neq t_{0}$. For $Z=(Z^{1},Z^{2})\in \R^{d}\times\R^{d}$ write
$$
Z^{j}=(z^{j}_{1},\dots,z^{j}_{N}),\quad j=1,2.
$$
Fix $\zeta\in\R^{2}\setminus(\{|\zeta|=t_{0}\}\cup\{0\})$. Take $X^{1}=(x^{1}_{1},0,\dots,0)$, $X^{2}=(x^{2}_{1},0,\dots,0)$. Define $X=(X^{1},X^{2})$,$Y=(Y^{1},Y^{2})=\cM(A)X$, $x=(x^{1}_{1},x^{2}_{1})$ and $y=(y^{1}_{1},y^{2}_{1})$. Then, $\sk{\cM(A)X}{X}_{\R^{2d}}=\sk{y}{x}_{\R^{2}}$ and
$$
\aligned
H^{A}_{\Gamma}[\zeta;X]&=\sk{(D^{2}\Gamma)(\zeta)x}{y}_{\R^{2}}\\
&=\frac{\gamma^{\prime}(|\zeta|)}{|\zeta|}\sk{X}{\cM(A)X}_{\R^{2d}}+\left(\gamma^{\prime\prime}(|\zeta|)-\frac{\gamma^{\prime}(\zeta)}{|\zeta|}\right)|\zeta|^{-2}\sk{\zeta}{x}_{\R^{2}}\sk{\zeta}{y}_{\R^{2}}.
\endaligned
$$
Now take $X\neq 0$ of the form above and such that $x$ is orthogonal to $\zeta$. Then, by the ellipticity of $A$, $
\sk{X}{\cM(A)X}_{\R^{2d}}\geq\lambda |X|^{2}>0
$ and by assumption of $A$-convexity of $\Gamma$,
$$
0\leq H^{A}_{\Gamma}[\zeta;X]=\frac{\gamma^{\prime}(|\zeta|)}{|\zeta|}\sk{X}{\cM(A)X}_{\R^{2d}}.
$$
It follows that $\gamma^{\prime}(|\zeta|)\geq 0$.
\smallskip

\noindent\underline{We now prove that $\gamma$ is convex.} It is well-known (and easy to see by means of a convolution argument for regularising $\gamma$) that this is equivalent to proving that $\gamma^{\prime\prime}(t)\geq 0$ for all $t\in (0,+\infty)\setminus\{t_{0}\}$. Fix $t\notin\{t_{0},0\}$ and $\zeta\in\R^{2}$ such that $|\zeta|=t$. We rewrite \eqref{eq: App1} as
$$
\aligned
(D^{2}\Gamma)(\zeta)&=\gamma^{\prime\prime}(t)I_{\R^{2}}+\left(\gamma^{\prime}(t)-t\gamma^{\prime\prime}(t)\right)|\zeta|^{-1}\left[I_{\R^{2}}-\frac{\zeta}{|\zeta|}\otimes\frac{\zeta}{|\zeta|}\right],\\
&=\gamma^{\prime\prime}(t)I_{\R^{2}}+\left(\gamma^{\prime}(t)-t\gamma^{\prime\prime}(t)\right)(D^{2}F_{1})(\zeta),
\endaligned
$$
where $F_{1}(\zeta)=|\zeta|$. 

Therefore, for all $X\in\R^{2d}$ we have 
\begin{equation}\label{eq: sz1}
0\leq H^{A}_{\Gamma}[\zeta,X]=\gamma^{\prime\prime}(t)\sk{X}{\cM(A)X}_{\R^{2d}}+\left(\gamma^{\prime}(t)-t\gamma^{\prime\prime}(t)\right)H^{A}_{F_{1}}[\zeta;X].
\end{equation}
It follows from \eqref{eq: N Deltap} that $\Delta_{1}(A)\leq 0$. Hence, by Lemma~\ref{l: N prop of Deltap}~(\ref{L4 (v)}) we have
$$
\min_{|X|=1}H^{A}_{F_{1}}[\zeta;X]\leq 0.
$$
From this, \eqref{eq: sz1}, the fact that $\gamma^{\prime}\geq 0$, and the inequality $\sk{X}{\cM(A)X}_{\R^{2d}}\geq \lambda|X|^{2}$ we deduce that $\gamma^{\prime\prime}(t)\geq 0$. 
\smallskip

\noindent\underline{Convexity of $\Gamma$} is now clear and easily follows from the already proved properties of $\gamma$; for the reader's convenience, we give a complete proof. By using a standard convolution argument, it suffices to prove that $(D^{2}\Gamma)(\zeta)\geq 0$, for all $\zeta\notin \{|\zeta|=t_{0}\}\cup\{0\}$. For $\zeta\neq 0$ the two matrices $\tfrac{\zeta}{|\zeta|}\otimes\tfrac{\zeta}{|\zeta|}$ and $I_{\R^{2}}-\tfrac{\zeta}{|\zeta|}\otimes\tfrac{\zeta}{|\zeta|}$ (which represent the orthogonal projections on ${\rm{span}}\{\zeta\}$ and $\zeta^{\bot}$, respectively)  are positive semi-definite. Hence $D^{2}\Gamma(\zeta)\geq 0$, by \eqref{eq: App1} and the property of $\gamma$ that we have proved above.
\medskip

\noindent\underline{We now prove (ii).} Suppose that there exists $s>0$ such that $\gamma^{\prime}(s)=0$, then by item~(i) $\gamma^{\prime}(t)=0$ for all $t\in [0,s]$. Assume that $\gamma$ is nonconstant. Then,
$$
t_{1}:=\sup\{t>0: \gamma^{\prime}(t)=0\}\in (0,+\infty),
$$
$\gamma^{\prime}(t)>0$, for all $t>t_{1}$ and $\gamma^{\prime}(t_{1})=0$. For simplifying the proof, we assume that $t_{1}=t_{0}$. For $t>t_{0}$, define the function $v(t)=\log(\gamma^{\prime}(t))$. Then $\lim_{t\rightarrow t^{+}_{0} }v(t)=-\infty$ and  $v^{\prime}(t)=\frac{\gamma^{\prime\prime}(t)}{\gamma^{\prime}(t)}\geq 0$, for all $t>t_{0}$. It follows that
\begin{equation}\label{eq: APPB2}
\limsup_{t\rightarrow t^{+}_{0}}\frac{\gamma^{\prime\prime}(t)}{\gamma^{\prime}(t)}=+\infty.
\end{equation}

We now show that \eqref{eq: APPB2} implies $\Im  A=0$. Define the function
$$
p(t):=t\frac{\gamma^{\prime\prime}(t)}{\gamma^{\prime}(t)}+1\geq 1, \quad t>t_{0}.
$$
Then, by \eqref{eq: App1},
$$
(D^{2}\Gamma)(\zeta)=\frac{\gamma^{\prime}(|\zeta|)}{|\zeta|}\left[I_{\R^{2}}+(p(|\zeta|)-2)\,\frac{\zeta}{|\zeta|}\otimes\frac{\zeta}{|\zeta|}\right],\quad \forall |\zeta|>t_{0}.
$$
The formula above expresses the Hessian of the radial function $\Gamma$ in terms of Hessians of power functions. More specifically, by \eqref{eq: formula Hessian} we have
$$
(D^{2}\Gamma)(\zeta)=g(|\zeta|)\cdot(D^{2}F_{p(|\zeta|)})(\zeta),\quad \forall |\zeta|>t_{0},
$$
where
$$
g(t):=\frac{\gamma^{\prime}(t)}{p(t)t^{p(t)-1}}\geq 0, \quad t>t_{0}.
$$
Since $\Gamma$ is $A$-convex, it follows from the identity above that
$$
H^{A}_{F_{p(|\zeta|)}}[\zeta;X]\geq 0,\quad \forall X\in\R^{2d},\quad \forall |\zeta|>t_{0},
$$
or, equivalently (see Lemma~\ref{l: N prop of Deltap}~(\ref{L4 (v)})), $\Delta_{p(|\zeta|)}(A)\geq 0$, for all $|\zeta|>t_{0}$. Now by \eqref{eq: APPB2} we have $\sup_{|\zeta|>t_{0}}p(|\zeta|)=+\infty$, so by Lemma~\ref{l: N prop of Deltap}~(\ref{L4 (i)}) and (\ref{L4 (iii)}) we have $\Delta_{p}(A)\geq 0$, for all $p>1$. Hence, Lemma~\ref{l: N prop of Deltap}~(\ref{L4 (vi)}) implies that $\Im A=0$.
\end{proof}
\section{Flow regularity}
Let $(\cX,\mu)$ be a $\sigma$-finite measure space. Fix $p\geq 2$ and set $q=p/(p-1)$. Suppose that $(\exp(-t\cA))_{t>0}$ and $(\exp(-t\cB))_{t>0}$ are analytic and uniformly bounded both in $L^{p}(\cX,\mu)$ and $L^{q}(\cX,\mu)$. Let $\cQ=\cQ_{p,\delta}$ be the Nazarov-Treil Bellman function defined in \eqref{eq: N Bellman}. Fix $f,g\in (L^{p}\cap L^{q})(\cX,\mu)$. Consider the flow
$$
\cE(t)=\int_{\cX}\cQ(e^{-t\cA}f,e^{-t\cB}g),\quad t>0,
$$
where we omit the subscript $\cW$ (see Section~\ref{s: N heat-flow}). 

\begin{proposition}
Under the above assumptions, we have:
\begin{itemize}
\item[{\rm (a)}] $\cE\in C[0,+\infty)$;
\item[{\rm (b)}] $\cE\in C^{1}(0,+\infty)$ and
$$
-\cE^{\prime}(t)=2\Re\int_{\cX}\left((\partial_{\zeta}\cQ)(e^{-t\cA}f,e^{-t\cB}g)\cA e^{-t\cA}+(\partial_{\eta}\cQ)(e^{-t\cA}f,e^{-t\cB}g)\cB e^{-t\cB}g\right).
$$
\end{itemize}
\end{proposition}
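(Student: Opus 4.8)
Write $u(t)=e^{-t\cA}f$ and $v(t)=e^{-t\cB}g$. The plan is to proceed in three stages. First I would introduce the quantity
$$
h(t):=2\Re\int_{\cX}\bigl((\partial_\zeta\cQ)(u(t),v(t))\,\cA u(t)+(\partial_\eta\cQ)(u(t),v(t))\,\cB v(t)\bigr),\qquad t>0,
$$
which is, up to sign, the right-hand side of the asserted formula for $-\cE'$, and show that it is absolutely convergent with $h\in C(0,\infty)$. By analyticity and uniform boundedness of the two semigroups in $L^p$ and in $L^q$, for each $t>0$ we have $u(t)\in\Dom(\cA)$ and $v(t)\in\Dom(\cB)$ in both spaces, the maps $t\mapsto u(t),v(t)$ are norm-differentiable with derivatives $-\cA u(t),-\cB v(t)$, the bounds $\norm{\cA u(t)}{r}\leqsim\norm{f}{r}/t$ and $\norm{\cB v(t)}{r}\leqsim\norm{g}{r}/t$ hold for $r\in\{p,q\}$, and $t\mapsto u(t),v(t),\cA u(t),\cB v(t)$ are continuous from $(0,\infty)$ into $L^p\cap L^q$. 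Combining the pointwise bounds $|\partial_\zeta\cQ(\zeta,\eta)|\leqsim_{p}|\zeta|^{p-1}+|\eta|$ and $|\partial_\eta\cQ(\zeta,\eta)|\leqsim_{p}|\eta|^{q-1}$ of \eqref{eq: N 5} with H\"older's inequality (exponents $p,q$) gives
$$
\int_{\cX}\bigl(|\partial_\zeta\cQ(u(t),v(t))||\cA u(t)|+|\partial_\eta\cQ(u(t),v(t))||\cB v(t)|\bigr)\leqsim\bigl(\norm{u(t)}{p}^{p-1}+\norm{v(t)}{q}\bigr)\norm{\cA u(t)}{p}+\norm{v(t)}{q}^{q-1}\norm{\cB v(t)}{q}<\infty,
$$
so $h(t)$ is well defined; continuity of $h$ on $(0,\infty)$ then follows from the continuity of $t\mapsto u(t),v(t),\cA u(t),\cB v(t)$ into $L^p\cap L^q$, the continuity of the superposition operators attached to $\partial_\zeta\cQ$ and $\partial_\eta\cQ$ (a standard consequence of $\cQ\in C^1(\R^4)$ and the growth bounds \eqref{eq: N 5}), and H\"older's inequality.

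In the second stage I would establish the integrated identity $\cE(t_2)-\cE(t_1)=-\int_{t_1}^{t_2}h(s)\wrt s$ for all $0<t_1<t_2$; together with $h\in C(0,\infty)$ this gives $\cE\in C^1(0,\infty)$ with $-\cE'=h$, which is statement (b). Fixing $0<t_1<t_2$, the point is to pass to a pointwise-in-$x$ formulation. Since $\tau\mapsto\cA u(\tau)$ is continuous from $[t_1,t_2]$ into $L^p$, the identity $u(s)=u(t_1)-\int_{t_1}^{s}\cA u(\tau)\wrt\tau$ holds as a Bochner integral in $L^p$; because $(\cX,\mu)$ is $\sigma$-finite and $\tau\mapsto\norm{\cA u(\tau)}{p}$ is bounded on $[t_1,t_2]$, Tonelli's theorem yields $\int_{t_1}^{t_2}|(\cA u(\tau))(x)|\wrt\tau<\infty$ for a.e.\ $x$ and lets the Bochner integral commute with evaluation at $x$, so for a.e.\ $x$ the function $s\mapsto u(s,x)$ is absolutely continuous on $[t_1,t_2]$ with $\partial_s u(s,x)=-(\cA u(s))(x)$ for a.e.\ $s$; likewise for $v$. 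Composing with $\cQ\in C^1$ (chain rule for absolutely continuous curves, with the convention $\partial_\zeta=(\partial_{\zeta_1}-i\partial_{\zeta_2})/2$ as in Section~\ref{s: N heat-flow}), we obtain, for a.e.\ $x$,
$$
\cQ(u(t_2,x),v(t_2,x))-\cQ(u(t_1,x),v(t_1,x))=-\int_{t_1}^{t_2}2\Re\bigl[(\partial_\zeta\cQ)(u(s,x),v(s,x))(\cA u(s))(x)+(\partial_\eta\cQ)(u(s,x),v(s,x))(\cB v(s))(x)\bigr]\wrt s.
$$
Integrating this in $x$ over $\cX$ and applying Fubini's theorem --- legitimate because, as in the first stage, $\int_{t_1}^{t_2}\int_{\cX}(|\partial_\zeta\cQ(u(s),v(s))||\cA u(s)|+|\partial_\eta\cQ(u(s),v(s))||\cB v(s)|)\wrt\mu(x)\wrt s<\infty$, where now boundedness of $\norm{\cA u(s)}{p}$ and $\norm{\cB v(s)}{q}$ on $[t_1,t_2]$ (i.e.\ $t_1>0$) is used --- yields $\cE(t_2)-\cE(t_1)=-\int_{t_1}^{t_2}h(s)\wrt s$, hence (b).

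Finally, for (a): continuity of $\cE$ on $(0,\infty)$ is a consequence of (b), and $\cE(0)=\int_{\cX}\cQ(f,g)<\infty$ by the first bound in \eqref{eq: N 5}. At $t=0$ I would use strong continuity of the semigroups --- $u(t)\to f$ and $v(t)\to g$ in $L^p$ and in $L^q$ as $t\to0^+$ --- together with the elementary bound, valid for all $\zeta_i,\eta_i\in\R^2$ by $\cQ\in C^1(\R^4)$ and the gradient estimates in \eqref{eq: N 5},
$$
|\cQ(\zeta_1,\eta_1)-\cQ(\zeta_2,\eta_2)|\leqsim_{p}\bigl((|\zeta_1|+|\zeta_2|)^{p-1}+|\eta_1|+|\eta_2|+(|\eta_1|+|\eta_2|)^{q-1}\bigr)\bigl(|\zeta_1-\zeta_2|+|\eta_1-\eta_2|\bigr).
$$
H\"older's inequality (exponents $p,q$) and the uniform boundedness of $\norm{u(t)}{p},\norm{u(t)}{q},\norm{v(t)}{p},\norm{v(t)}{q}$ for $t\in[0,1]$ then give $|\cE(t)-\cE(0)|\leqsim\norm{u(t)-f}{p}+\norm{u(t)-f}{q}+\norm{v(t)-g}{p}+\norm{v(t)-g}{q}\to0$ as $t\to0^+$, which finishes (a). I expect the main obstacle to be the second stage --- upgrading the $L^p$/$L^q$-valued differentiability of $t\mapsto u(t),v(t)$ to genuine pointwise-in-$x$ absolute continuity of $s\mapsto(u(s,x),v(s,x))$, so that the chain rule for $\cQ$ may be applied for a.e.\ $x$ and Fubini invoked; the ingredients there are the analyticity bound $\norm{\cA u(t)}{r}\leqsim\norm{f}{r}/t$, the $\sigma$-finiteness of $(\cX,\mu)$, and Tonelli's theorem.
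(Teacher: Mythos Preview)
Your argument is correct. For part (a) it coincides with the paper's proof (mean value theorem on $\cQ$, the gradient bounds \eqref{eq: N 5}, H\"older, and strong continuity). For part (b), however, you take a genuinely different route from the paper.

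The paper exploits analyticity more heavily: it expands $e^{-t\cA}f$ as a power series about each $t_0>0$, converging in $L^p\cap L^q$, and uses the series to produce local-in-time majorants $\sup_{|t-t_0|\le\delta}|e^{-t\cA}f(x)|$ and $\sup_{|t-t_0|\le\delta}|\cA e^{-t\cA}f(x)|$ lying in $L^p\cap L^q$; the same series shows that (after redefinition on a null set) $t\mapsto e^{-t\cA}f(x)$ is real-analytic for a.e.\ $x$. With these ingredients one differentiates $\cE$ directly under the integral sign by dominated convergence. Your approach, by contrast, uses analyticity only to the extent that $t\mapsto u(t),v(t)$ is $C^1$ into $L^p\cap L^q$ with the standard $1/t$ bound on the derivative: you pass from the Bochner identity $u(s)=u(t_1)-\int_{t_1}^{s}\cA u(\tau)\,d\tau$ to pointwise absolute continuity via Tonelli on finite-measure pieces of $\cX$ (this is where $\sigma$-finiteness enters), apply the chain rule for $\cQ\in C^1$ along the absolutely continuous curve $s\mapsto(u(s,x),v(s,x))$, and then swap integrals by Fubini to obtain the integrated identity $\cE(t_2)-\cE(t_1)=-\int_{t_1}^{t_2}h(s)\,ds$; differentiability of $\cE$ then comes from the continuity of $h$, which you establish separately via Nemytskii-operator continuity. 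This is slightly more elementary --- it never invokes the power-series expansion or the pointwise real-analyticity --- and the ``integrated identity plus continuous integrand'' device cleanly sidesteps a direct dominated-convergence justification of the difference quotient. The paper's route, on the other hand, yields the stronger byproduct that the orbits are pointwise real-analytic and furnishes explicit local majorants, which can be useful elsewhere.
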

\begin{proof}
\underline{We start with (a)}. We prove only the continuity at $0$ since the continuity at other points can be proved exactly in the same way, or 
it follows from item (b). Set
$$
F(t,x)=\cQ(e^{-t\cA}f(x),e^{-t\cB}g(x)).
$$
By the mean value theorem applied to $\cQ$,
$$
\aligned
\mod{F(t,x)-F(0,x)}\leq &\max\left\{\mod{(D\cQ)(\zeta,\eta)}: |\zeta|\leq \mod{e^{-t\cA}f(x)}+|f(x)|,\ |\eta|\leq \mod{e^{-t\cB}g(x)}+|g(x)|\right\}\\
&\times \sqrt{\mod{e^{-t\cA}f(x)-f(x)}^{2}+\mod{e^{-t\cB}g(x)-g(x)}^{2}}.
\endaligned
$$
Estimates \eqref{eq: N 5} immediately give
$$
\aligned
&\max\left\{\mod{(D\cQ)(\zeta,\eta)}: |\zeta|\leq \mod{e^{-t\cA}f(x)}+|f(x)|,\ |\eta|\leq \mod{e^{-t\cB}g(x)}+|g(x)|\right\}\\
&\leq C\max\left\{\left(|e^{-t\cA}f(x)|+|f(x)|\right)^{p-1},\left(|e^{-t\cA}g(x)|+|g(x)|\right)^{q-1}, |e^{-t\cA}g(x)|+|g(x)|\right\},
\endaligned
$$
where $C$ does not depend on $x$ and $t$. Now item (a) follows from H\"older's inequality and the strong continuity of the two semigroups in $L^{p}(\cX,\mu)$ and $L^{q}(\cX,\mu)$.
\medskip

\underline{We now prove item (b)}. Analyticity implies that there exist $C>0$ such that, for $r\in\{p,q\}$,
$$
\norm{\left(\frac{d}{d t}\right)^{k}e^{-t\cA}}{r}=\norm{\cA^{k}e^{-t\cA}}{r}\leq t^{-k}C^{k}k!,\quad \forall t>0,
$$
see, for example, \cite[Chapter~II, p.104]{EN}.
Fix $t_{0}>0$. Then there exists $\delta>0$ such that  
$$
e^{-t\cA}f=\sum^{+\infty}_{k=0}\frac{(-1)^{k}}{k!}(t-t_{0})^{k}\cA^{k}e^{-t_{0}\cA}f,\quad  \forall |t-t_{0}|\leq \delta
$$
where the series converges in $(L^{p}\cap L^{q})(\cX,\mu)$. Moreover,
$$
\sup_{|t-t_{0}|\leq \delta}\mod{e^{-t\cA}f(x)}\leq \sum^{+\infty}_{k=0}\frac{\delta^{k}}{k!}\mod{\cA^{k}e^{-t_{0}\cA}f(x)}\in (L^{p}\cap L^{q})(\cX,\mu)
$$
and similarly,
$$
\sup_{|t-t_{0}|\leq \delta}\mod{\cA e^{-t\cA}f(x)}\in (L^{p}\cap L^{q})(\cX,\mu).
$$
Possibly taking a smaller $\delta$, we also get
$$
\sup_{|t-t_{0}|\leq \delta}\mod{e^{-t\cB}g(x)}+\sup_{|t-t_{0}|\leq \delta}\mod{\cB e^{-t\cB}g(x)}\in (L^{p}\cap L^{q})(\cX,\mu).
$$
By using the powers series expansion of $e^{-t\cA}f$ and $e^{-t\cB}g$ one can also prove that each $e^{-t\cA}f$ and each $e^{-t\cB}g$ can be redefined in a set of measure zero, in such a manner that for almost every $x\in\Omega$ the functions $t\mapsto e^{-t\cA}f(x)$ is real-analytic on $(0,\infty)$, for a.e. $x\in\Omega$; see \cite[p. 72]{stein}.
Now item (b) follows from estimates \eqref{eq: N 5} and standard theorems of derivation and passage of the limit under the integral sign.
\end{proof}

\subsection*{Acknowledgements}

The first author was partially supported by the ``National Group for Mathematical Analysis, Probability and their Applications'' (GNAMPA-INdAM).

The second author was partially supported by the Ministry of Higher Education, Science and Technology of Slovenia (research program Analysis and Geometry, contract no. P1-0291).

\bibliographystyle{amsxport}
\bibliography{biblio_mixed}

\end{document}